\patchcmd{\@settitle}{\uppercasenonmath\@title}{}{}{}
\newtheorem{theorem}[subsection]{Theorem}
\newtheorem{proposition}[subsection]{Proposition}
\newtheorem{lemma}[subsection]{Lemma}
\newtheorem{corollary}[subsection]{Corollary}
\newtheorem{definition}[subsection]{Definition}
\newtheorem{claim}[subsection]{Claim}
\newtheorem{example}[subsection]{Example}
\newtheorem{remark}[subsection]{Remark}
\def\loccitt{\emph{loc. cit.}}
\def\loccit{\emph{loc. cit. }}
\def\fW{{\mathfrak{W}}}
\def\fZ{{\mathfrak{Z}}}
\def\BA{{\mathbb{A}}}
\def\BC{{\mathbb{C}}}
\def\BK{{\mathbb{K}}}
\def\BN{{\mathbb{N}}}
\def\BF{{\mathbb{F}}}
\def\BP{{\mathbb{P}}}
\def\BQ{{\mathbb{Q}}}
\def\BZ{{\mathbb{Z}}}
\def\CA{{\mathcal{A}}}
\def\CB{{\mathcal{B}}}
\def\DD{{\mathcal{D}}}
\def\CL{{\mathcal{L}}}
\def\CR{{\mathcal{R}}}
\def\CS{{\mathcal{S}}}
\def\CV{{\mathcal{V}}}
\def\Hom{\textrm{Hom}}
\def\vs{\varsigma}
\def\and{\textrm{ }\&\textrm{ }}
\def\sym{\textrm{sym}}
\def\Sym{\textrm{Sym}}
\def\oCS{\mathring{\CS}}
\def\nn{{{\BN}}^I}
\def\bs{{\boldsymbol{\vs}}}
\def\bn{\boldsymbol{n}}
\def\bvs{\boldsymbol{\varsigma}}
\def\bphi{\boldsymbol{\phi}}
\def\op{\text{op}}
\def\oii{\overrightarrow{ii}}
\def\oij{\overrightarrow{ij}}
\def\oji{\overrightarrow{ji}}
\def\loc{\text{loc}}
\def\eloc{\emph{loc}}
\begin{document}

\title[Shuffle algebras for quivers and wheel conditions]{\Large{\textbf{Shuffle algebras for quivers and wheel conditions}}}

\author[Andrei Negu\cb t]{Andrei Negu\cb t}
\address{MIT, Department of Mathematics, Cambridge, MA, USA}
\address{Simion Stoilow Institute of Mathematics, Bucharest, Romania}
\email{andrei.negut@gmail.com}

\maketitle

\begin{abstract} We show that the shuffle algebra associated to a doubled quiver (determined by 3-variable wheel conditions) is generated by elements of minimal degree. Together with results of \cite{VV} and \cite{Zhao}, this implies that the aforementioned shuffle algebra is isomorphic to the localized $K$-theoretic Hall algebra associated to the quiver by \cite{G, SV Hilb, YZ 0}. With small modifications, our theorems also hold under certain specializations of the equivariant parameters, which will allow us in \cite{Quiver 3} to give a generators-and-relations description of the Hall algebra of any curve over a finite field (which is a shuffle algebra due to \cite{KSV}). When the quiver has no edge loops or multiple edges, we show that the shuffle algebra, localized $K$-theoretic Hall algebra, and the positive half of the corresponding quantum loop group are all isomorphic; we also obtain the non-degeneracy of the Hopf pairing on the latter quantum loop group.

\end{abstract}

$$$$

\epigraph{\emph{\begin{otherlanguage*}{russian}Где-то есть люди, для которых теорема верна \end{otherlanguage*}}}

\section{Introduction}

\medskip

\subsection{} A quiver is a finite oriented graph. Fix a quiver $Q$ with vertex set $I$ and edge set $E$; edge loops and multiple edges are allowed. For any $\bn = (n_i)_{i \in I} \in \nn$ \footnote{Throughout the present paper, the set $\BN$ is considered to include 0}, an $\bn$-dimensional quiver representation is a point in the vector space:
$$
Z_{\bn} = \bigoplus_{\oij \in E} \text{Hom}(\BC^{n_i}, \BC^{n_j})
$$
i.e. a collection of linear maps indexed by the edges of the quiver. As one is often interested in quiver representations only up to automorphism, it is meaningful to consider the conjugation action of the following group on $Z_{\bn}$:
$$
G_{\bn} = \prod_{i \in I} GL_{n_i}(\BC)
$$
Thus, a quiver representation up to automorphism is a point in the stack:
$$
\fZ_{\bn} = Z_{\bn} / G_{\bn}
$$
The geometry of this stack is the source of many beautiful constructions in geometric representation theory: for example, Lusztig's categorification of the positive half of quantum groups using constructible sheaves on $\fZ_{\bn}$. Using the microsupport construction, one can change perspective and consider instead the $K$-theoretic Hall algebra studied by Schiffmann-Vasserot (\cite{G, Lu, SV Hilb, YZ 0}, see \cite{S Lect} for an overview and historical perspective):
\begin{equation}
\label{eqn:k-ha intro}
K = \bigoplus_{\bn \in \nn} K_T(T^*\fZ_{\bn})
\end{equation}
A fuller description of $T^*\fZ_{\bn}$, as well as the action of the following torus on it: 
$$
T = \BC^* \times \prod_{e \in E} \BC^*
$$
will be recalled in Section \ref{sec:k-ha}. We will first describe our results for the $K$-theoretic Hall algebra equivariant with respect to the full torus $T$, and then describe the modifications necessary for treating the case of subtori in Subsection \ref{sub:intro special}. As shown in \cite{SV Hilb}, $K$ is a $\nn$-graded algebra over $\text{Rep}_T = \BZ[q^{\pm 1}, t_e^{\pm 1}]_{e \in E}$. If we let $\BF$ denote the fraction field of $\text{Rep}_T$, we may define the localized $K$-theoretic Hall algebra as:
\begin{equation}
\label{eqn:intro k-ha loc}
K_{\text{loc}} = K \bigotimes_{\text{Rep}_T} \BF
\end{equation}
There is a natural map (\cite{S Lect}, we will recall the construction in Subsection \ref{sub:kha to shuffle}):
\begin{equation}
\label{eqn:shuf intro}
K_{\text{loc}} \rightarrow \CV = \bigoplus_{\bn = (n_i)_{i \in I} \in \nn} \BF[z_{i1}^{\pm 1}, \dots, z_{in_i}^{\pm 1}]^{\sym}_{i \in I}
\end{equation}
which is an algebra homomorphism, where the multiplication in $K_{\text{loc}}$ is the convolution product \eqref{eqn:conv prod}, and the multiplication in $\CV$ is the shuffle product \eqref{eqn:shuf prod}. Yu Zhao (\cite{Zhao}) showed that the map above actually lands in the subalgebra of $\CV$ consisting of Laurent polynomials which satisfy the 3-variable wheel conditions:
\begin{equation}
\label{eqn:iota intro}
K_{\text{loc}} \xrightarrow{\iota} \CS = \left\{ R \in \CV \text{ s.t. } R \Big|_{z_{ia} = \frac {qz_{jb}}{t_e} = q z_{ic}} = R \Big|_{z_{ja} = t_e z_{ib} = q z_{jc}} = 0 \right\}
\end{equation}
for any edge $e = \oij$ and all $a \neq c$ (and further $a \neq b \neq c$ if $i = j$). The vanishing properties of $R \in \CV$ above are inspired by those of \cite{FHHSY, FO} for quantum groups, hence we refer to them as ``wheel conditions". Let us consider the spherical subalgebra:
$$
\oCS \subseteq \CS
$$
generated by Laurent polynomials in one variable (i.e. corresponding to the direct summands $\bn = \bs_i := (\underbrace{\dots,0,1,0,\dots}_{1 \text{ on }i\text{-th spot}})$ of $\CV$ in \eqref{eqn:shuf intro}, $\forall i \in I$). Our main result is: \\

\begin{theorem}
\label{thm:intro} 

(\emph{Theorem \ref{thm:main}}) We have $\oCS = \CS$. \\

\end{theorem}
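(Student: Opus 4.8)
The plan is to show the reverse inclusion $\CS \subseteq \oCS$ by induction on the total degree $|\bn| = \sum_{i \in I} n_i$, the base case $|\bn| \leq 1$ being trivial since those graded pieces of $\CS$ and $\oCS$ coincide by definition. For the inductive step, I would start with an arbitrary $R \in \CS$ living in the $\bn$-graded piece and attempt to subtract off a correction term coming from $\oCS$ so that the difference lies in a smaller ``filtration level'', eventually forcing it to vanish. The natural filtration to use is by the pole orders (in the spirit of the $\col$/$\pol$ notation set up in the preamble): a shuffle element $R$ can be written with a common denominator built out of the factors $\zeta_{ij}(z_{ia}/z_{jb})$ appearing in the shuffle product \eqref{eqn:shuf prod}, and the key quantitative input is to understand, via the wheel conditions, how the \emph{leading term} of $R$ (its image in the associated graded, i.e.\ the numerator after clearing denominators, evaluated ``at infinity'' along each block of variables) is constrained.

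The technical heart is a specialization/factorization argument. Group the variables $z_{ia}$ into two clusters — say the first $k_i$ variables in color $i$ versus the rest — and send the variables in one cluster to infinity (or to a generic common rescaling $z_{ia} \mapsto \xi z_{ia}$ with $\xi \to \infty$). The shuffle product structure guarantees that the leading coefficient of this limit is itself a shuffle element in fewer variables, i.e.\ it factors (up to an explicit monomial/$\zeta$-prefactor) as a product of two shuffle elements of strictly smaller degree; and crucially the wheel conditions are inherited by both factors, so both lie in $\CS$ for smaller $\bn$, hence in $\oCS$ by induction. This is the standard mechanism by which ``generated in degree one'' results are proved for shuffle algebras (cf.\ \cite{FHHSY, FO} and subsequent work), but here one must check it is compatible with the quiver-theoretic $\zeta$-factors and with the \emph{two} families of wheel conditions (the $t_e$ and the $t_e^{-1}$ versions) simultaneously, for loops as well as genuine edges.

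Concretely the steps are: (1) fix the common denominator and define the leading-term map $R \mapsto \lead(R)$ together with the pole filtration on each graded piece of $\CS$; (2) prove the factorization lemma for $\lead(R)$ under the cluster-at-infinity specialization, and verify that each factor again satisfies the wheel conditions; (3) by the induction hypothesis express each factor through $\oCS$, and reassemble to produce an element $R' \in \oCS$ (a shuffle product of lower-degree spherical elements, suitably symmetrized over the $z_{ia}$) whose leading term matches that of $R$; (4) conclude that $R - R'$ has strictly smaller pole order (or is ``more vanishing''), and iterate — the iteration terminates because the relevant filtration is bounded, using that the wheel conditions bound how large the poles of a genuine element of $\CS$ can be relative to its degree. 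I expect step (4), namely controlling the pole orders / showing the descent actually terminates, to be the main obstacle: one needs a sharp a priori estimate, forced by the wheel conditions, on the denominators of elements of $\CS$ in terms of $\bn$, and this is exactly where the precise form of the $3$-variable wheel conditions (and the interplay between the two families indexed by each edge $e = \oij$) has to be used in an essential way rather than formally.
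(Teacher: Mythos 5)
Your proposal takes a genuinely different route from the paper's, and the gap you flag yourself in step (4) is the fatal one; the paper's actual argument is designed precisely to sidestep that obstacle. You correctly observe that showing the descent terminates is where the wheel conditions must do real work, but you offer no mechanism for why the filtration by pole order is exhausted. An a priori bound on denominators of elements of $\CS_{\bn}$ is already known — Proposition 1.14 of \cite{SV} gives exactly the sandwiching of $\oCS$ between $\CS$ and a principal ideal that such a bound yields — and that was the state of the art before this paper: the bound alone does not show that the descent reaches zero. There is also an earlier soft spot: even granting that $\lead(R)$ factors under your cluster-at-infinity specialization into elements of $\CS$ of smaller degree (and that they inherit both families of wheel conditions), it does not follow that you can reassemble an $R' \in \oCS$ whose leading term matches that of $R$. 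The shuffle product of two spherical elements carries its own $\zeta$-prefactor, so "reassembly" requires inverting that prefactor inside $\oCS$, which begs the question.

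The paper's proof is not an induction on degree. It constructs a pairing $\CS \otimes \oCS^{\op} \to \BF$ via iterated contour integrals (Proposition \ref{prop:pairing}) and shows it is well-defined and non-degenerate in the first argument, using the wheel conditions to control which residues survive a contour deformation. It then imports the combinatorics of non-increasing and standard words from \cite{LR,L,R,NT}, shows that the $f_w$ for non-increasing $w$ span $\oCS^{\op}$ (Proposition \ref{prop:proto}), and introduces the auxiliary graph $G$ of Subsection \ref{sub:graph}. The key Lemma \ref{lem:combi} — that the connected components of $G$ are finite — yields a direct-sum decomposition of $\oCS$ and $\oCS^{\op}$ into finite-dimensional blocks that are mutually orthogonal under the pairing. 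For any $R \in \CS$ only finitely many of these blocks pair non-trivially with $R$, so non-degeneracy on each finite-dimensional block produces an $R' \in \oCS$ agreeing with $R$ under all pairings, hence $R = R'$. The essential point you would need, and do not have, is replaced there by a purely combinatorial finiteness statement rather than any pole-order descent.
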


\noindent Since $\iota$ is an isomorphism in degrees $\bs_i$ for all $i \in I$, this implies that $\iota$ is surjective. As $\iota$ was shown to be injective by Varagnolo-Vasserot in \cite{VV}, we conclude that:
\begin{equation}
\label{eqn:iso intro}
K_\loc \cong \CS
\end{equation}
Thus the shuffle algebra provides an explicit model for the localized K-HA. The advantage of such a result is that one can construct numerous elements of the K-HA just by producing symmetric Laurent polynomials which satisfy the wheel conditions (this approach was used in \cite{Shuf,K,Hecke} to categorify the celebrated Heisenberg algebra action on the cohomology of Hilbert schemes, of Nakajima and Grojnowski). \\

\noindent In \cite{Quiver 2}, we will use the constructions in the present paper to consider shuffle algebras as quasi-triangular Hopf algebras, and explore the connections between their universal $R$-matrices and the geometric constructions of \cite{AO, MO, O1, O2, OS}. This will lead us to a conjectural realization of the Kac polynomial of the quiver (at $t=1$) as the graded dimension of a certain explicit ``slope subalgebra" of $\CS$. \\

\noindent Our proof of Theorem \ref{thm:intro} relies on the combinatorics of ``loop words" introduced in \cite{NT}, a treatment which was inspired by \cite{LR, L, R}. In fact, our proof of Theorem \ref{thm:intro} also gives another proof of \cite[Theorem 1.8]{NT} in the simply laced case. However, while \loccit heavily used particular features of quantum loop groups, our proof of Theorem \ref{thm:intro} is direct and could in principle be applied to numerous other flavors of shuffle algebras. For example, in \cite{Cartan} we adapt the techniques developed herein to the setting of quantum loop groups for arbitrary symmetric Cartan matrices, while in \cite{Arbitrary} we study the more general setting of $K$-theoretic Hall algebras associated to quivers (see \cite{KS} for the original construction, and \cite{Pad 2} for a new viewpoint). \\

\subsection{} 
\label{sub:intro special} 

We will now address the analogue of the results above when $\{q,t_e\}_{e \in E}$ are no longer independent formal symbols, but non-zero elements of a field $\BK$ of characteristic 0. In this generality, define the shuffle algebra as the $\BK$-vector subspace:
\begin{equation}
\label{eqn:shuffle spec intro}
_{\BK}\CS \subset \bigoplus_{\bn = (n_i)_{i \in I} \in \nn} \BK[z_{i1}^{\pm 1}, \dots, z_{in_i}^{\pm 1}]^{\sym}_{i \in I}
\end{equation}
consisting of symmetric Laurent polynomials which satisfy \eqref{eqn:divisible 2}. If we let $_{\BK}\oCS \subseteq {_{\BK}\CS}$ denote the subalgebra generated by Laurent polynomials in one variable, we have: \\

\begin{theorem}
\label{thm:intro spec} 

(\emph{Corollary \ref{cor:gen}}) We have $_{\BK}\oCS = {_{\BK}\CS}$, under \textbf{Assumption \begin{otherlanguage*}{russian}Ъ\end{otherlanguage*}}, namely:
\begin{equation}
\label{eqn:intro assumption}
\exists \text{ a field homomorphism } \rho : \BK \rightarrow \BC \text{ s.t. } |\rho(q)| < |\rho(t_e)| < 1, \ \forall e \in E
\end{equation}

\end{theorem}

\medskip

\noindent Let us now give three main applications of Theorem \ref{thm:intro spec}. The first involves setting:
$$
\BK = \text{Frac}(\text{Rep}_H)
$$
for a subtorus $H \subset T$ (and letting $q,t_e \in \text{Rep}_H$ be the characters of $H$ restricted from the homonymous characters of $T$). In this context, we will prove in Proposition \ref{prop:special yu} that there exists an algebra homomorphism analogous to \eqref{eqn:iota intro}:
\begin{equation}
\label{eqn:special iota}
K_{H,\text{loc}} \xrightarrow{\iota_H} {_{\BK}\CS}
\end{equation}
where the left-hand side is defined as in \eqref{eqn:k-ha intro} and \eqref{eqn:intro k-ha loc}, but with $T, \BF$ replaced by $H,\BK$. If the characters $\{q,t_e\}_{e \in E}$ of $H$ satisfy Assumption \begin{otherlanguage*}{russian}Ъ\end{otherlanguage*} of \eqref{eqn:intro assumption}, then Theorem \ref{thm:intro spec} implies that the map \eqref{eqn:special iota} is surjective (see Corollary \ref{cor:special surj}). We emphasize the fact that this holds for either deformed or non-deformed $K$-theoretic Hall algebras, in the language of \cite{VV}, as long as Assumption \begin{otherlanguage*}{russian}Ъ\end{otherlanguage*} holds. Whenever $\iota_H$ is also injective (in Subsection \ref{sub:special k-ha} we will recall a quite general criterion, due to \cite{VV}, for when this happens) then we conclude that $\iota_H$ is an isomorphism and thus $K_{H,\text{loc}}$ is generated by elements of minimal degree (see Corollary \ref{cor:special iso} for details). \\

\noindent The second main application of Theorem \ref{thm:intro spec} is when $\BK = \BC$ and the complex numbers $\{t_1, \dots, t_g, q/t_1,\dots, q/t_g\}$ are set equal to the inverses of the Weil numbers of a smooth projective genus $g$ curve $X$ over $\BF_{q^{-1}}$. By \cite{KSV} and \cite{SV}, the spherical Hall algebra of the category of coherent sheaves on $X$ is isomorphic to the shuffle algebra $_{\BK}\oCS$ for the quiver with one vertex and $g$ loops. Assumption \begin{otherlanguage*}{russian}Ъ\end{otherlanguage*} is verified because the Weil numbers of $X$ have absolute value $q^{-\frac 12}$, according to the Riemann hypothesis. Then Theorem \ref{thm:intro spec} implies that the spherical Hall algebra in question is isomorphic to $_{\BK}\CS$, and thus can be described by the explicit wheel conditions \eqref{eqn:divisible 2}. In \cite{Quiver 3}, we will develop this argument further to obtain a generators-and-relations description of the spherical Hall algebra in question, as well as extend the result to the subalgebra generated by absolutely cuspidal eigenforms of the whole Hall algebra of $X$. \\

\subsection{}
\label{sub:intro km}

For the third main application of Theorem \ref{thm:intro spec}, assume that $Q$ has no edge loops or multiple edges. Consider the following chain of algebra homomorphisms: 
\begin{equation}
\label{eqn:triple composition}
\Upsilon : U_q^+(L \mathfrak{g}_Q) \longrightarrow K_{\BC^*,\text{loc}} \xrightarrow{\iota_{\BC^*}} {_q\CS'}
\end{equation}
where: \\

\begin{itemize}[leftmargin=*]

\item the left-most algebra is the positive half of the quantum loop group associated to the (Cartan matrix of the) quiver $Q$ (see Definition \ref{def:quantum group}); \\

\item the middle algebra is the localized $K$-theoretic Hall algebra, equivariant with respect to the one-dimensional torus: 
$$
\BC^* \hookrightarrow \BC^* \times \prod_{e \in E} \BC^*, \qquad a \mapsto (a^2,a,\dots,a)
$$
The standard character of the one-dimensional torus above is denoted by $q^{\frac 12}$; \\

\item the right-most algebra is the shuffle algebra of Subsection \ref{sub:r}, with the ground field changed to $\BK = \BQ(q^{\frac 12})$ and $t_e$ set equal to $q^{\frac 12}$ for all edges $e$ (note that the wheel conditions \eqref{eqn:wheel} and \eqref{eqn:divisible 2} are equivalent in the case at hand). \\

\end{itemize}

\noindent The first arrow in \eqref{eqn:triple composition} is one of the main motivations for $K$-theoretic Hall algebras, as they provide models for the positive halves of quantum loop groups. Assumption \begin{otherlanguage*}{russian}Ъ\end{otherlanguage*} applies to the situation in the third bullet above, so the composition of the maps in \eqref{eqn:triple composition} is surjective. Together with results of \cite{Quiver 3}, we obtain the following. \\

\begin{theorem}
\label{thm:triple iso}

For any quiver $Q$ without edge loops or multiple edges, the map $\Upsilon$ of \eqref{eqn:triple composition} is an isomorphism. Since the first arrow in \eqref{eqn:triple composition} was shown to be surjective in \cite[Theorem A]{VV}, we conclude that all three algebras in \eqref{eqn:triple composition} are isomorphic. \\

\end{theorem}

\noindent Theorem \ref{thm:triple iso} was proved for cyclic quivers in \cite{Cyc} and for finite type quivers in \cite{NT}. For more general quivers, we will tackle the analogue of Theorem \ref{thm:triple iso} in \cite{Cartan, Arbitrary}. \\

\begin{corollary}
\label{cor:hopf pairing}

For any quiver $Q$ without edge loops or multiple edges, the usual ``bialgebra" pairing (see \eqref{eqn:pairing loop} for details):
$$
U_q^+(L \mathfrak{g}_Q) \otimes U_q^-(L \mathfrak{g}_Q) \xrightarrow{\langle \cdot, \cdot \rangle} \BQ(q^{\frac 12})
$$
is non-degenerate. \\

\end{corollary}

\subsection{} The structure of the present paper is the following. \\

\begin{itemize}[leftmargin=*]

\item In Section \ref{sec:k-ha}, we recall known facts about the $K$-theoretic Hall algebra associated to the quiver $Q$, and about its relationship to the shuffle algebra $\CS$. \\

\item In Section \ref{sec:proof}, we introduce certain features of words in letters $i^{(d)}$ (where $i \in I$ and $d \in \BZ$), and use these features to prove Theorem \ref{thm:intro}. \\

\item In Section \ref{sec:r}, we explain how to adapt our proof to account for ``twists" of the shuffle product on $\CS$, including a particular choice which yields important examples: quantum loop groups associated to quivers, and Hall algebras of curves over finite fields. This particular twist admits an important Hopf algebra structure.  \\ 

\item In Section \ref{sec:tori}, we explain how to adapt the contents of the present paper to the situation when the parameters $\{q,t_e\}_{e\in E}$ are not generic, but are non-zero elements of a certain field $\BK$. We prove the theorems and corollaries from Subsections \ref{sub:intro special} and \ref{sub:intro km}. \\

\end{itemize}

\noindent I would like to thank Boris Feigin, Francesco Sala, Olivier Schiffmann, Tudor Pădurariu, Alexander Tsymbaliuk (with special thanks for the substantial feedback), Michela Varagnolo, \'Eric Vasserot and Yu Zhao for many interesting conversations about $K$-theoretic Hall algebras, shuffle algebras and much more. I gratefully acknowledge NSF grants DMS-$1760264$ and DMS-$1845034$, as well as support from the Alfred P.\ Sloan Foundation and the MIT Research Support Committee.\\

\section{$K$-theoretic Hall algebras of (doubled) quivers} 
\label{sec:k-ha}

\medskip

\subsection{} A quiver is a finite oriented graph $Q$ with vertex set $I$ and edge set $E$; edge loops and multiple edges are allowed. Given a collection of non-negative integers $\bn = (n_i)_{i \in I}$, a representation of $Q$ of dimension $\bn$ is a collection of linear maps:
$$
\bphi = \Big( \phi_e : \BC^{n_i} \rightarrow \BC^{n_j} \Big)_{\forall e = \oij \in E}
$$
(if there are several edges between two given vertices $i$ and $j$, then there will be several linear maps $\phi_e$ between $\BC^{n_i}$ and $\BC^{n_j}$ part of the datum $\bphi$). The space of representations of the quiver is simply the affine space parametrizing all such $\bphi$'s:
$$
Z_{\bn} = \bigoplus_{e = \oij \in E} \Hom(\BC^{n_i}, \BC^{n_j})
$$
One is often interested in studying representations up to isomorphism, i.e. the orbits of $Z_{\bn}$ under the action of the group:
$$
G_{\bn} = \prod_{i \in I} GL_{n_i}(\BC)
$$
where $(g_i)_{i \in I}$ sends $(\phi_e)_{e = \oij}$ to $(g_j \phi_e g_i^{-1})_{e = \oij}$. The corresponding quotient:
$$
\fZ_{\bn} = Z_{\bn} / G_{\bn}
$$
is the stack of $\bn$-dimensional quiver representations modulo isomorphism. A lot of beautiful mathematics seeks to understand the enumerative properties of the stack $\fZ_{\bn}$, such as counting the number of its points when $\BC$ is replaced by a finite field. \\

\noindent A more recent point of view is to consider other invariants of the stack $\fZ_{\bn}$, such as its equivariant (co)homology and $K$-theory. Though there are several points of view in this direction, we will focus on the one developed by Schiffmann, Varagnolo, Vasserot and others: to study the algebras that arise from $K$-theory groups of cotangent representation stacks, in a way which is to Nakajima's  construction of quantum group representations on the $K$-theory groups of double quiver varieties (see \cite{Nak}) as algebras are to modules. Specifically, one takes the $K$-theory groups \footnote{In the present paper, $K$ will denote the Grothendieck group of all coherent sheaves on a certain variety or stack; the usual notation for this concept in the literature is either $K_0$ or $G$.}:
\begin{equation}
\label{eqn:k-ha}
K_{\bn} = K_{T}(T^*\fZ_{\bn}) = K_{T \times G_{\bn}}(\mu_{\bn}^{-1}(0))
\end{equation}
Above, the cotangent stack $T^*\fZ_{\bn} = \mu_{\bn}^{-1}(0) / G_{\bn}$ is defined via the moment map:
\begin{equation}
\label{eqn:moment map}
\mu_{\bn} : T^*Z_{\bn} \rightarrow \bigoplus_{i \in I} \text{End}(\BC^{n_i})
\end{equation}
given by (recall that for a vector space $V$, we have $T^*V \cong V \times V^*$):
$$
\mu_{\bn} \Big( \phi_e : \BC^{n_i} \rightarrow \BC^{n_j}, \phi^*_e : \BC^{n_j} \rightarrow \BC^{n_i} \Big)_{\forall e = \oij \in E} = \sum_{e \in E} (\underbrace{\phi_e \phi^*_e}_{\in \text{End}(\BC^{n_j})} - \underbrace{\phi^*_e \phi_e}_{\in \text{End}(\BC^{n_i})} ) 
$$
In \eqref{eqn:k-ha}, one considers equivariant $K$-theory with respect to the torus: 
\begin{equation}
\label{eqn:torus}
T = \BC^* \times \prod_{e \in E} \BC^*
\end{equation}
which acts on $T^*Z_{\bn}$ via:
\begin{equation}
\label{eqn:torus action}
(\bar{q},\bar{t}_e)_{e \in E} \cdot (\phi_e, \phi^*_e)_{e \in E} = \left( \frac 1{\bar{t}_e} \phi_e, \frac {\bar{t}_e}{\bar{q}} \phi^*_e \right)_{e \in E}
\end{equation}
Thus, $K_{\bn}$ is a module over $K_T(\text{point}) = \text{Rep}_T = \BZ[q^{\pm 1}, t^{\pm 1}_e]_{e \in E}$, where $q$ and $t_e$ denote the natural dual coordinates on the factors of the product \eqref{eqn:torus}. Note that one needs the torus weights of $\phi_e$ and $\phi_e^*$ to multiply to one and the same weight for all edges $e \in E$, namely $q^{-1}$, in order for the map \eqref{eqn:moment map} to be $T$-equivariant. \\

\begin{remark}
\label{rem:1 loop}

When $Q$ is the Jordan quiver (namely one vertex and one loop), for any $n \in \BN$, the stack $T^*\fZ_n$ can be identified with the commuting stack:
$$
\emph{Comm}_n/GL_n(\BC) = \Big\{X,Y \in \emph{Mat}_{n \times n}(\BC), [X,Y] = 0 \Big\}/\text{simultaneous conjugation}
$$
This was one of the main examples that spurred the study of K-HA's in the present context (see \cite{SV Hilb}, which also provides a connection to Hilbert schemes of points). \\

\end{remark}

\subsection{} So far, each $K_{\bn}$ is a $\text{Rep}_T$-module. To construct an algebra (the so-called $K$-theoretic Hall algebra of the quiver $Q$), we need to sum over all dimension vectors:
$$
K = \bigoplus_{\bn \in \nn} K_{\bn}
$$
The multiplication in $K$ is given by the following convolution product (\cite{S Lect}). For any dimension vectors $\bn, \bn' \in \nn$, one considers the following stack of extensions:
\begin{equation}
\label{eqn:extension diagram}
\xymatrix{& \fW_{\bn,\bn'} \ar@{->>}[ld]_{p_1} \ar@{^{(}->}[rd]^{p_2} & \\
T^* \fZ_{\bn} \times T^*\fZ_{\bn'} & & T^*\fZ_{\bn+\bn'}}
\end{equation}
where the map $p_2$ is the embedding of the closed subset of quiver representations:
\begin{equation}
\label{eqn:collection}
\Big( \phi_e : \BC^{n_i+n_i'} \rightarrow \BC^{n_j+n_j'}, \phi^*_e : \BC^{n_j+n_j'} \rightarrow \BC^{n_i+n_i'} \Big)_{\forall e = \oij \in E} \in T^*\fZ_{\bn+\bn'}
\end{equation}
which preserve a fixed collection of subspaces $\BC^{n_i} \hookrightarrow \BC^{n_i+n_i'}$, for all $i \in I$. The map $p_1$ sends the collection \eqref{eqn:collection} to the pair comprising of the restricted linear maps to the subspaces $\BC^{n_i}$ (which yields a point of $T^*\fZ_{\bn}$) and the induced linear maps on the quotients $\BC^{n'_i} := \BC^{n_i+n_i'} / \BC^{n_i}$ (which yields a point of $T^*\fZ_{\bn'}$). Also define:
\begin{equation}
\label{eqn:line bundle}
[\CL_{\bn,\bn'}] = \prod_{i \in I} \left[ \left( - \frac 1q \right)^{n_in_i'} \frac {(\det V_i)^{\otimes n_i'}}{(\det V_i')^{\otimes n_i}} \right]
\end{equation}
as $\pm$ the class of an equivariant line bundle on $\fW_{\bn,\bn'}$ (above, $V_i$ and $V_i'$ denote the pull-backs via $p_1$ to $\fW_{\bn,\bn'}$ of the tautological rank $n_i$ and $n_i'$ vector bundles on the stacks $T^*\fZ_{\bn}$ and $T^*\fZ_{\bn'}$, respectively). With this in mind, the operation:
\begin{equation}
\label{eqn:conv prod}
K_{\bn} \otimes K_{\bn'} \xrightarrow{*} K_{\bn+\bn'}, \qquad \alpha * \alpha' = p_{2*} \Big([\CL_{\bn,\bn'}] \cdot p_1^! (\alpha \boxtimes \alpha') \Big)
\end{equation}
gives rise to an associative $\text{Rep}_T$-algebra structure on $K$ (the pull-back $p_1^!$ is the refined Gysin map, see \cite{Zhao 0} for an introduction). \\

\begin{remark}
\label{rem:g loops}

When $Q$ is the quiver with one vertex and $g$ loops, Schiffmann-Vasserot studied the notions above in detail in \cite{SV}, and connected $K$ to the Hall algebra of the category of coherent sheaves on a genus $g$ smooth curve over the finite field $\BF_{q^{-1}}$ (in this context, the equivariant parameters $t_1,\dots,t_g, qt_1^{-1},\dots,qt_g^{-1}$ must be specialized to the inverses of the Weil numbers of the curve). \\ 

\end{remark}

\begin{remark} 
\label{rem:modify}

A feature (or bug, depending on one's point of view) of $K$-theory, which does not arise in cohomology, is that one could in principle replace \eqref{eqn:line bundle} by other line bundles satisfying an appropriate compatibility condition. If one were to perform such a replacement, one would also need to multiply the functions \eqref{eqn:def zeta} by an appropriate monomial (see Section \ref{sec:r} for a particularly important example). \\

\end{remark}

\subsection{} 
\label{sub:kha to shuffle} 

The closed embedding $i : \mu_{\bn}^{-1}(0) \hookrightarrow T^*Z_{\bn}$ induces a map:
\begin{equation}
\label{eqn:iota 0}
\iota : K_{\bn} \xrightarrow{i_*} K_{T \times G_{\bn}}(T^*Z_n) \cong K_{T \times G_{\bn}}(\text{point}) \cong \text{Rep}_T[\dots,z^{\pm 1}_{ia},\dots]^{\sym}_{i \in I, 1 \leq a\leq n_i}
\end{equation}
The first isomorphism is the restriction map from a vector space to the origin, while the second isomorphism is a restatement of the fact that the representation ring of $GL_n(\BC)$ is the ring of symmetric Laurent polynomials in $n$ variables (indeed, the word ``sym" refers to Laurent polynomials which are symmetric in the variables $z_{i1},\dots,z_{in_i}$ for each $i$ separately). In \eqref{eqn:iota 0}, we write $z_{ia}$ for the character of the standard maximal torus of $GL_{\bn}$ which is ``dual" to the one-parameter subgroup:
$$
\text{diag}(\underbrace{1,\dots,1,t,1,\dots,1}_{t\text{ on }a\text{-th position}} ) \hookrightarrow GL_{n_i} \hookrightarrow GL_{\bn}
$$
Taking the direct sum of \eqref{eqn:iota 0} over all dimension vectors $\bn \in \nn$, we obtain a map:
\begin{equation}
\label{eqn:iota}
K \xrightarrow{\iota} \CV_{\text{int}} = \bigoplus_{\bn \in \nn} \text{Rep}_T[z^{\pm 1}_{i1},\dots,z^{\pm 1}_{in_i}]_{i \in I}^{\sym}
\end{equation}
The notation ``int" stands for ``integral", since the coefficient ring of $\CV_{\text{int}}$ is $\text{Rep}_T$. Let us endow $\CV_{\text{int}}$ with the following shuffle product, following \cite{SV} (see also \cite{E,FHHSY, FO, FT} for other representation-theoretic incarnations of this shuffle product):
\begin{equation}
\label{eqn:shuf prod}
R(\dots,z_{i1},\dots,z_{in_i},\dots) * R'(\dots,z_{i1},\dots,z_{in'_i},\dots) = 
\end{equation}
$$
\text{Sym} \left[\frac {R(\dots,z_{i1},\dots,z_{in_i},\dots)R'(\dots,z_{i,n_i+1},\dots,z_{i,n_i+n_i'},\dots)}{\prod_{i\in I} n_i! \prod_{i\in I} n_i'!}\mathop{\prod^{i,j \in I}_{1 \leq a \leq n_i}}_{n_{j} < b \leq n_{j}+n'_{j}} \zeta_{ij} \left( \frac {z_{ia}}{z_{jb}} \right) \right]
$$
where ``Sym" denotes symmetrization with respect to the variables $z_{i1},\dots,z_{i,n_i+n_i'}$ for each $i \in I$ separately, and for any $i,j \in I$ we define the following function:
\begin{equation}
\label{eqn:def zeta}
\zeta_{ij}(x) = \left(\frac {1-xq^{-1}}{1-x} \right)^{\delta_j^i} \prod_{e = \oij \in E} (1-t_e x) \prod_{e = \oji \in E} \left(1 - \frac {qx}{t_e} \right)
\end{equation}
Note that even though the right-hand side of \eqref{eqn:shuf prod} seemingly has simple poles at $z_{ia} - z_{ib}$ for all $i \in I$ and all $a < b$, these poles disappear in $\text{Sym}[\dots]$, as the orders of such poles in a symmetric rational function must be even. Therefore, the shuffle product \eqref{eqn:shuf prod} preserves the direct sum of Laurent polynomial rings $\CV_{\text{int}}$ of \eqref{eqn:iota}. The specific formula in \eqref{eqn:def zeta} was motivated by the following result. \\

\begin{proposition}
\label{prop:homomorphism}

The map \eqref{eqn:iota} is an algebra homomorphism. \\

\end{proposition}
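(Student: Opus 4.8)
The plan is to verify that $\iota$ intertwines the convolution product \eqref{eqn:conv prod} on $K$ with the shuffle product \eqref{eqn:shuf prod} on $\CV_{\text{int}}$, by tracking what the geometric operations do to equivariant $K$-theory classes after restriction to the torus fixed locus. First I would unwind the composition $\iota(\alpha) * \iota(\alpha')$ versus $\iota(\alpha * \alpha')$ on the diagram \eqref{eqn:extension diagram}: since $\iota$ is induced by $i_* : K_T(T^*\fZ_\bn) = K_{T\times G_\bn}(\mu_\bn^{-1}(0)) \to K_{T\times G_\bn}(T^*Z_\bn) \cong \text{Rep}_T[\dots, z_{ia}^{\pm 1},\dots]^{\sym}$, the key is to compute the pushforward $p_{2*}$ and the refined Gysin pullback $p_1^!$ at the level of the ambient smooth stacks $T^*Z_\bn$, where $K$-theory is just the representation ring of the relevant product of general linear groups.

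The main computation has three ingredients. First, $p_1^!(\alpha\boxtimes\alpha')$ restricted to fixed loci: the stack $\fW_{\bn,\bn'}$, parametrizing representations preserving a flag of subspaces $\BC^{n_i}\hookrightarrow\BC^{n_i+n_i'}$, has for its ambient smooth model a parabolic-type space, and $p_1$ is the (affine-bundle-like) projection to the Levi part; the refined Gysin map contributes the $K$-theoretic Euler class of the relevant excess/normal bundle, which is precisely the "off-diagonal" part of $T^*Z_{\bn+\bn'}$ not coming from the block-diagonal piece. Second, the pushforward $p_{2*}$ along the closed embedding into $T^*\fZ_{\bn+\bn'}$ becomes, after passing to the ambient space and using the projection formula together with the description of $K_{T\times G_{\bn+\bn'}}(\text{point})$ as symmetric Laurent polynomials, an induction functor from $\text{Rep}$ of the Levi to $\text{Rep}$ of $GL_{\bn+\bn'}$ — and induction in the representation ring of general linear groups is exactly symmetrization over cosets, i.e. the ``Sym'' with the $\prod n_i!\prod n_i'!$ denominators appearing in \eqref{eqn:shuf prod}. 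Third, I would identify the $T$-weights in the Euler class from the first step with the product $\prod \zeta_{ij}(z_{ia}/z_{jb})$: each edge $e=\oij$ contributes $\Hom(\BC^{n_i},\BC^{n_j'})$ from $\phi_e$ with weight $t_e^{-1}$ (producing the factor $1-t_e x$ after normalization) and $\Hom(\BC^{n_j'},\BC^{n_i})$ from $\phi_e^*$ with weight $t_e/q$ (producing $1-qx/t_e$), while the $G_\bn$-directions contribute the $\delta_j^i$ factor $\tfrac{1-xq^{-1}}{1-x}$; the $(-1/q)^{n_in_i'}\det$-twist \eqref{eqn:line bundle} accounts for the sign ambiguity in the $K$-theoretic Euler class and the $\det$-shifts needed to make the weights match on the nose.

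The hard part, and the step I expect to require the most care, is the precise bookkeeping of the refined Gysin pullback $p_1^!$ for the moment-map-cut stacks $\mu^{-1}(0)$ — i.e. showing that the naive computation on the ambient affine spaces $T^*Z_\bn$ correctly descends to the singular loci $\mu_\bn^{-1}(0)$, so that no spurious contribution from the moment map equations is lost or double-counted. This is exactly where $K$-theory differs from cohomology and where the compatibility condition alluded to in Remark \ref{rem:modify} enters; one resolves it by working $G_{\bn+\bn'}$-equivariantly on $T^*Z_{\bn+\bn'}$, where $\mu^{-1}(0)$ sits inside a smooth ambient space and the refined Gysin map is the honest Gysin map for the regular embedding cut out by $\mu$, and then checking that the $\mu$-contributions on the $\fW$ side and the $T^*\fZ_{\bn+\bn'}$ side cancel because the moment map is compatible with the flag (its block-triangular structure). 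Granting this, all three ingredients above assemble to show $\iota(\alpha*\alpha') = \iota(\alpha)*\iota(\alpha')$, and associativity of the shuffle product (which holds formally, since $\zeta_{ij}$ is a fixed rational function) then forces associativity of the convolution product as well, completing the proof.
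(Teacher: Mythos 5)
Your proposal follows essentially the same route as the paper: work on the large commutative diagram extending \eqref{eqn:extension diagram}, compute everything after pushing to the ambient smooth spaces $T^*Z_\bn$ where equivariant $K$-theory is a representation ring, identify the excess/normal Euler classes with the $\zeta_{ij}$ factors, and identify $p_{2*}$ as induction from a parabolic to $GL_{\bn+\bn'}$, which produces the symmetrization and the $\prod n_i!\,\prod n_i'!$ and Weyl denominators. You also correctly single out the bookkeeping for the moment-map-cut loci as the step requiring care.

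One detail, however, is in tension with itself and would bite you if you carried the computation through. You claim in the last paragraph that the moment-map contributions on the $\fW_{\bn,\bn'}$ side and the $T^*\fZ_{\bn+\bn'}$ side ``cancel because the moment map is compatible with the flag,'' while in the third ingredient you attribute the $\delta_j^i$ factor $\tfrac{1-xq^{-1}}{1-x}$ of $\zeta_{ij}$ to ``$G_\bn$-directions.'' These cannot both be taken at face value: the $G_\bn$-directions \emph{are} the moment-map target $\oplus_i \mathrm{End}(\BC^{n_i})$. In the paper's computation, the moment-map constraints do \emph{not} cancel; applying excess intersection across the bottom square of the diagram leaves a surviving excess bundle $A = \mathrm{Ker}\,\bar p_1 = \oplus_i \Hom(\BC^{n_i'},\BC^{n_i})$ (the strictly off-diagonal block of the adjoint action), weighted by $q^{-1}$, and it is exactly $[\wedge^\bullet A^\vee]$ that produces the numerator $(1-xq^{-1})$ in $\zeta_{ii}$. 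The denominator $(1-x)$ comes from the separate Weyl-denominator contribution in the parabolic induction step of $p_{2*}$. So the mechanism is: the moment-map excess factor gives the numerator of $\delta_j^i$, not a cancellation; if you assume the $\mu$-contributions vanish, you would drop this numerator and the final product would fail to match \eqref{eqn:shuf prod}. With that correction, your outline matches the paper's proof.
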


\begin{proof} The proof is standard, so we encourage the interested reader to go through the details (we will only sketch the main steps, and leave the straightforward details as an exercise). Consider the following commutative diagram, extending \eqref{eqn:extension diagram}:

$$
\xymatrix{& W_{\bn,\bn'} \ar@{->>}[ld]_{p_1} \ar@{^{(}->}[rd]^{p_2} \ar@{^{(}->}[dd]^{i_2} & \\
\mu_{\bn}^{-1}(0) \times \mu_{\bn'}^{-1}(0) \ar@{^{(}->}[dd]^{i_1} & & \mu_{\bn+\bn'}^{-1}(0) \ar@{^{(}->}[dd]^{i_3} \\ 
& Y_{\bn,\bn'} \ar@{->>}[ld]_{\tilde{p}_1} \ar@{^{(}->}[rd]^{\tilde{p}_2} \ar[dd]^{\pi_2} & \\
T^*Z_{\bn} \times T^*Z_{\bn'} \ar[dd]^{\pi_1} & & T^*Z_{\bn+\bn'} \ar[dd]^{\pi_3} \\
& X_{\bn,\bn'} \ar@{->>}[ld]_{\bar{p}_1} \ar@{^{(}->}[rd]^{\bar{p}_2} & \\
\oplus_{i \in I} \text{End}(\BC^{n_i}) \times \oplus_{i \in I} \text{End}(\BC^{n_i'}) & & \oplus_{i \in I} \text{End}(\BC^{n_i+n_i'})}
$$
where the spaces in the middle column are defined so that $p_2,\tilde{p}_2,\bar{p}_2$ are the closed embeddings of the loci of linear maps between the collection of vector spaces $\BC^{n_i+n_i'}$ which preserve the collection of vector subspaces $\BC^{n_i}$. Then we have:
$$
\iota(\alpha * \alpha') = i_{3*} \Big[ p_{2*} \Big([\CL_{\bn,\bn'}] \cdot p_1^!(\alpha \boxtimes \alpha') \Big) \Big] \Big|_\circ =  \tilde{p}_{2*} \Big[ i_{2*} \Big([\CL_{\bn,\bn'}] \cdot p_1^!(\alpha \boxtimes \alpha') \Big) \Big] \Big|_\circ 
$$
where $|_\circ$ denotes restriction to the origin of an affine space. By the excess intersection formula, the right-hand side of the expression above equals:
\begin{equation}
\label{eqn:above}
\tilde{p}_{2*} \Big[ \tilde{p}_{1}^! \Big( [\wedge^\bullet A^\vee] \cdot [\CL_{\bn,\bn'}] \cdot i_{1*}(\alpha \boxtimes \alpha') \Big) \Big] \Big|_\circ 
\end{equation}
where $A = \text{Ker } \bar{p}_1$. The analogous formula allows us to replace $\tilde{p}_{2*}(\tilde{p}_1^!(\dots))|_\circ$ in the formula above by $[\wedge^\bullet B^\vee] \cdot (\dots)|_\circ$, where $B = \text{Coker } \tilde{p}_2$, so \eqref{eqn:above} equals:
$$
[\wedge^\bullet A^\vee] \cdot [\wedge^\bullet B^\vee] \cdot [\CL_{\bn,\bn'}] \cdot i_{1*}(\alpha \boxtimes \alpha') \Big|_\circ = [\wedge^\bullet A^\vee] \cdot [\wedge^\bullet B^\vee] \cdot [\CL_{\bn,\bn'}] \cdot \Big(\iota(\alpha) \boxtimes \iota(\alpha') \Big)
$$
Explicitly, the $T \times G_{\bn} \times G_{\bn'}$ characters of $A$, $B$, $\CL_{\bn,\bn'}$ are:
$$
A = \bigoplus_{i \in I}\Hom(\BC^{n_i'}, \BC^{n_i}) \qquad \quad \ \ \Rightarrow \ \ \quad \qquad \chi_{T \times G_{\bn} \times G_{\bn'}}(A) = \mathop{\sum^{i\in I}_{1\leq a \leq n_i}}_{n_i < b \leq n_i+n_i'} \frac {z_{ia}}{qz_{ib}}
$$
\begin{multline*}
B = \bigoplus_{e = \oij} \Big( \Hom(\BC^{n_i}, \BC^{n_j'}) \oplus \Hom(\BC^{n_j}, \BC^{n_i'}) \Big) \quad \Rightarrow \\ \Rightarrow \quad \chi_{T \times G_{\bn} \times G_{\bn'}}(B) = \sum_{e = \oij \in E} \left( \mathop{\sum_{1\leq a \leq n_i}}_{n_j < b \leq n_j+n_j'} \frac {z_{jb}}{t_ez_{ia}} + \mathop{\sum_{1\leq a \leq n_j}}_{n_i < b \leq n_i+n_i'} \frac {t_ez_{ib}}{qz_{ja}} \right)
\end{multline*}
while:
$$
\chi_{T \times G_{\bn} \times G_{\bn'}}(\CL_{\bn + \bn'}) = \mathop{\prod^{i\in I}_{1\leq a \leq n_i}}_{n_i < b \leq n_i+n_i'} \left( - \frac {z_{ia}}{qz_{ib}} \right)
$$
If a torus representation $V$ has character $\sum_i \chi_i$, then $[\wedge^\bullet V^\vee]$ is equal to $\prod_i (1-\chi_i^{-1})$ in $K$-theory. Therefore, putting the contributions of $A$, $B$, $\CL_{\bn+\bn'}$ together, we get:
\begin{equation}
\label{eqn:temp}
\iota_*(\alpha * \alpha') = \Big( \iota_*(\alpha) \boxtimes \iota_*(\alpha') \Big) \cdot
\end{equation}
$$
\mathop{\prod^{i\in I}_{1\leq a \leq n_i}}_{n_i < b \leq n_i+n_i'} \left(1 - \frac {z_{ia}}{qz_{ib}} \right) \mathop{\prod^{e = \oij \in E}_{1\leq a \leq n_i}}_{n_j < b \leq n_j+n_j'} \left(1 - \frac {t_ez_{ia}}{z_{jb}} \right) \mathop{\prod^{e = \oji \in E}_{1\leq a \leq n_i}}_{n_j < b \leq n_j+n_j'} \left(1 - \frac {qz_{ia}}{t_ez_{jb}} \right)
$$
The expression in the right-hand side matches the right-hand side of \eqref{eqn:shuf prod}, but for two key differences. Firstly, the right-hand side of \eqref{eqn:temp} is missing the factor:
$$
\delta = \mathop{\prod^{i\in I}_{1\leq a \leq n_i}}_{n_i < b \leq n_i+n_i'} \left(1 - \frac {z_{ia}}{z_{ib}} \right)
$$
in the denominator. Secondly, we are missing the symmetrization. The reason for these discrepancies is that \eqref{eqn:temp} is an equality of classes in $K_{T \times G_{\bn} \times G_{\bn'}}(\text{point})$, while $\iota(\alpha * \alpha')$ is defined as a class in $K_{T \times G_{\bn+\bn'}}(\text{point})$. Therefore, one needs to pull the right-hand side of \eqref{eqn:temp} back from $G_{\bn} \times G_{\bn'}$ to a parabolic subgroup $P_{\bn,\bn'} \subset G_{\bn+\bn'}$, and then one has to push-forward the result from $P_{\bn,\bn'}$ to $G_{\bn+\bn'}$. The first operation does not change any formulas, but the second operation is responsible for dividing by the Weyl denominator $\delta$ and the symmetrization. 

\end{proof}

\subsection{} We will now define a certain subalgebra of $\CV_{\text{int}}$, determined by the so-called wheel conditions. These first arose in the context of elliptic quantum groups in \cite{FO}, and the version herein is inspired by the particular wheel conditions of \cite{FHHSY} (which actually correspond to the case when $Q$ is the Jordan quiver in our construction). \\

\begin{definition}
\label{def:shuffle}

The \textbf{shuffle algebra} is defined as the subset:
$$
\CS_{\emph{int}} \subset \CV_{\emph{int}}
$$
of Laurent polynomials $R(\dots, z_{i1}, \dots, z_{in_i}, \dots)$ that satisfy the ``wheel conditions":
\begin{equation}
\label{eqn:wheel}
R \Big|_{z_{ia} = \frac {qz_{jb}}{t_e} = q z_{ic}} = R \Big|_{z_{ja} = t_e z_{ib} = q z_{jc} } = 0
\end{equation}
for all edges $e = \oij$ and all $a \neq c$ (and further $a \neq b \neq c$ if $i = j$). \\

\end{definition}

\noindent The following is elementary (and moreover is closely related to a particular case of Proposition \ref{prop:restricted subalg}) and so we leave its proof as an exercise to the reader. \\

\begin{proposition}
\label{prop:subalg}

$\CS_{\emph{int}}$ is a subalgebra of $\CV_{\emph{int}}$. \\

\end{proposition}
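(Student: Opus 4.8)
The only substantive point is closure under the shuffle product $*$ (stability under addition and the presence of the unit are immediate). The plan is to verify this directly. Fix $R\in\CS_{\textrm{int}}$ of degree $\bn$ and $R'\in\CS_{\textrm{int}}$ of degree $\bn'$, fix an edge $e=\oij$ and indices $a\neq c$ (with $a\neq b\neq c$ if $i=j$); I will check that $R*R'$ satisfies the \emph{first} vanishing in \eqref{eqn:wheel}, the second being handled identically — it is the first vanishing for the edge $e$ taken with reversed orientation and conjugate parameter $q/t_e$, and \eqref{eqn:def zeta} is designed to be symmetric under this operation, so the two wheel conditions are exchanged by it. Since $R*R'$ is a Laurent polynomial and the locus $\{z_{ia}=q z_{ic},\ z_{jb}=t_e z_{ic}\}$ is irreducible, it is enough to show that $R*R'$ vanishes at a \emph{generic} point $P$ of this locus; at such a $P$ all the coordinates $z_{k\alpha}$ are pairwise distinct, because $q$, $t_e$, $q/t_e$ are all $\neq 1$.

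Next I would unwind the symmetrization in \eqref{eqn:shuf prod}: using that $R$ and $R'$ are separately symmetric, $R*R'$ is the sum, over all ways of choosing for each color $k\in I$ a size-$n_k$ subset $S_k$ of the variables $\{z_{k1},\dots,z_{k,n_k+n'_k}\}$ (those fed into $R$, the complementary ones fed into $R'$), of the terms
$$
R\big((z_{k\alpha})_{\alpha\in S_k}\big)\cdot R'\big((z_{k\beta})_{\beta\notin S_k}\big)\cdot\prod_{k,l\in I}\ \prod_{\alpha\in S_k,\,\beta\notin S_l}\zeta_{kl}\!\left(\frac{z_{k\alpha}}{z_{l\beta}}\right).
$$
Each such term is a rational function whose poles lie only along diagonals $z_{k\alpha}=z_{k\beta}$, hence is regular at $P$; so it suffices to show that every term vanishes at $P$. (It is precisely to make sense of the individual summands that one passes to a generic point rather than substituting blindly into the symmetrized expression.)

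Now fix one term, i.e. one choice of the $S_k$, and look at where the three distinguished variables $z_{ia}, z_{jb}, z_{ic}$ land. If all three lie in the $R$-group, then $R$ is being evaluated exactly at its own wheel specialization for the edge $e$ and indices $a,b,c$ (pairwise distinct when $i=j$), hence the term vanishes; likewise if all three lie in the $R'$-group. In the remaining cases $\{z_{ia},z_{jb},z_{ic}\}$ is split nontrivially, and I claim one of the cross-factors $\zeta_{kl}$ then vanishes at $P$. At $P$ one has $z_{ia}/z_{ic}=q$, $z_{jb}/z_{ic}=t_e$, $z_{ia}/z_{jb}=q/t_e$, and reading off \eqref{eqn:def zeta} one finds that the three ordered configurations
$$
\big(z_{ia}\in S_i,\ z_{ic}\notin S_i\big),\qquad\big(z_{jb}\in S_j,\ z_{ia}\notin S_i\big),\qquad\big(z_{ic}\in S_i,\ z_{jb}\notin S_j\big)
$$
each force a vanishing factor in the term: respectively $\zeta_{ii}(q)=0$ because of the numerator $1-q\cdot q^{-1}$, and $\zeta_{ji}(t_e/q)=0$, $\zeta_{ij}(1/t_e)=0$ because of the factors of \eqref{eqn:def zeta} attached to the edge $e$ itself (namely $1-q(t_e/q)/t_e$ and $1-t_e\cdot t_e^{-1}$). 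These three ``forbidden'' ordered pairs form a directed $3$-cycle on $\{z_{ia},z_{jb},z_{ic}\}$, so for any nontrivial partition of this three-element set into an $R$-part and an $R'$-part at least one edge of the cycle runs from the $R$-part to the $R'$-part, i.e. at least one of the three configurations above is realized. Hence every term vanishes at $P$, so $R*R'$ vanishes on the wheel locus, and $\CS_{\textrm{int}}$ is a subalgebra of $\CV_{\textrm{int}}$.

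I expect the only real care to be in the bookkeeping: verifying that the poles of the individual summands avoid a generic point of the wheel locus, and keeping the \emph{direction} of each $\zeta$-argument straight (which variable feeds $R$ versus $R'$), since this is exactly what determines which of $\zeta_{ii}(q)$, $\zeta_{ji}(t_e/q)$, $\zeta_{ij}(1/t_e)$ degenerates. Once those three vanishing identities are recorded, the $3$-cycle/pigeonhole observation closes the argument with no further computation, and the second wheel condition in \eqref{eqn:wheel} follows by the same reasoning applied to the opposite orientation of $e$.
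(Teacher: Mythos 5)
Your proof is correct, and since the paper explicitly leaves this Proposition as an exercise (pointing to Proposition~2.3 of \cite{Shuf} for the prototype), it is worth saying something about what you did. Your argument is the expected one and is well organized: you expand $R*R'$ into its individual shuffle terms indexed by the choice of subsets $(S_k)_{k\in I}$, observe that each term is regular at a generic point of the irreducible wheel locus (the only denominators being the same-colour diagonals, which are avoided because $q$, $t_e$, $q/t_e\neq 1$), and then kill each term by either the wheel condition for $R$, the wheel condition for $R'$, or one of the three $\zeta$-vanishings $\zeta_{ii}(q)=0$, $\zeta_{ji}(t_e/q)=0$, $\zeta_{ij}(1/t_e)=0$. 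The encoding of these three vanishings as a directed $3$-cycle on $\{z_{ia},z_{jb},z_{ic}\}$, followed by the observation that any nontrivial bipartition of a directed cycle must cut at least one forward edge, is a clean way to package the case analysis and is exactly in the spirit of the wheel-condition arguments in \cite{Shuf} and \cite{Tale}. Two small remarks: first, your reduction of the second wheel condition to the first ``by reversing the orientation of $e$ and conjugating $t_e\mapsto q/t_e$'' is correct but stated loosely; a cleaner statement is simply that the identical argument, run for the three variables $z_{ja},z_{jc},z_{ib}$ with ratios $q$, $t_e$, $q/t_e$ at the second wheel locus, produces the $3$-cycle $z_{ja}\to z_{jc}\to z_{ib}\to z_{ja}$ with the same three vanishing factors. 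Second, it would be slightly more precise to say that each term vanishes \emph{identically as a rational function} after the wheel substitution (the denominators remaining nonzero elements of the relevant ring), rather than just ``at a generic point'' — this avoids any appeal to irreducibility and works uniformly over $\text{Rep}_T$; but the generic-point phrasing is also fine since the wheel locus is an irreducible subtorus.
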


\noindent The following is a key observation, due to Yu Zhao (\cite{Zhao}; although \loccit deals with the case of the Jordan quiver, the generalization to any $Q$ is immediate). \\

\begin{proposition} 
\label{prop:yu}

The image of the map \eqref{eqn:iota} lands in the shuffle algebra:
$$
\iota : K \rightarrow \CS_{\emph{int}} \subset \CV_{\emph{int}}
$$

\end{proposition}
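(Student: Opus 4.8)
The plan is to prove the vanishing of $\iota(\alpha)$ under each wheel specialization geometrically, for an \emph{arbitrary} class $\alpha \in K_{\bn}$, by reading the specialization as restriction to a codimension‑two subtorus and then invoking the localization theorem. Fix an edge $e = \oij$ together with indices $a \neq c$ on vertex $i$ and $b$ on vertex $j$ (and, when $i = j$, require $a,b,c$ pairwise distinct), and let $\sigma \subset T \times T_{\bn}$ (here $T_{\bn}$ is the maximal torus of $G_{\bn}$) be the subtorus cut out by $z_{ia} = q z_{ic}$ and $z_{jb} = t_e z_{ic}$; restricting the symmetric Laurent polynomial $\iota(\alpha)$ along $T \times G_{\bn} \supset \sigma$ is precisely the first specialization in \eqref{eqn:wheel}, and the second one (together with the loop case $i=j$) is handled by the identical argument. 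Since $\iota$ is, by \eqref{eqn:iota 0}, the pushforward $i_* \colon K_{T \times G_{\bn}}(\mu_{\bn}^{-1}(0)) \to K_{T \times G_{\bn}}(T^*Z_{\bn})$ along the closed embedding $i$, followed by the homotopy‑invariance isomorphism $K_{T \times G_{\bn}}(T^*Z_{\bn}) \cong K_{T \times G_{\bn}}(\textrm{pt})$, and restriction to a subtorus commutes with proper pushforward, it suffices to show that $i_*(\alpha)$ vanishes in $K_\sigma(T^*Z_{\bn}) \cong \textrm{Rep}_\sigma$.

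The heart of the argument is the identification of the $\sigma$‑fixed loci. Because $T^*Z_{\bn}$ is a coordinate affine space on which $\sigma$ acts diagonally, $(T^*Z_{\bn})^\sigma$ is the span of the coordinate directions of trivial $\sigma$‑weight; inspecting the weights coming from \eqref{eqn:torus action} — using only the two defining relations of $\sigma$ together with the genericity of the remaining $z$'s and of $q, t_\bullet$, which is automatic as these are formal variables — one finds that the \emph{only} such directions are the matrix entries $x := (\phi_e)_{bc}$ and $y := (\phi_e^*)_{ab}$. Thus $(T^*Z_{\bn})^\sigma \cong \BA^2$ with coordinates $x,y$, both of trivial $\sigma$‑weight, and on this $\BA^2$ every $\phi_{e'},\phi_{e'}^*$ vanishes except the two rank‑one maps $\phi_e$ and $\phi_e^*$ determined by $x$ and $y$. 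A direct computation of the moment map on $\BA^2$ then gives $\mu_{\bn} = \pm\, x y \cdot (\text{elementary matrix})$ — it is exactly the hypothesis $a \neq c$ that forces the \emph{other} composition of $\phi_e$ and $\phi_e^*$ to vanish — so that $\big(\mu_{\bn}^{-1}(0)\big)^\sigma = \{xy = 0\} \subset \BA^2$, the union of the two coordinate axes meeting at the origin.

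To finish, I would apply the localization theorem to $\mu_{\bn}^{-1}(0)$ with its $\sigma$‑action (in the form valid for a quasi‑projective, possibly singular, or derived scheme): the pushforward $K_\sigma\big((\mu_{\bn}^{-1}(0))^\sigma\big)_{\textrm{loc}} \xrightarrow{\ \sim\ } K_\sigma(\mu_{\bn}^{-1}(0))_{\textrm{loc}}$ is an isomorphism, so after localization $\alpha|_\sigma$ is pushed forward from $\{xy=0\}$, whence $i_*(\alpha)|_\sigma$ is the image of a class under the composite $K_\sigma(\{xy=0\}) \to K_\sigma(\BA^2) \to K_\sigma(T^*Z_{\bn})$. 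But $\sigma$ acts trivially on $\BA^2 = (T^*Z_{\bn})^\sigma$, and $K_\sigma(\{xy=0\})$ is generated over $\textrm{Rep}_\sigma$ by $[\CO_{\{x=0\}}]$ and $[\CO_{\{y=0\}}]$; since $x$ and $y$ have trivial $\sigma$‑weight, the Koszul resolutions of these axes inside $\BA^2$ give $[\CO_{\{y=0\}}] = [\CO_{\BA^2}] - [\CO_{\BA^2}] = 0$, and likewise for $\{x=0\}$, so the first arrow in the composite is identically zero. Hence $i_*(\alpha)|_\sigma = 0$ in the localization, and therefore already in $\textrm{Rep}_\sigma$ (a domain, so it injects into its localization) — which is exactly the wheel condition. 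I expect the main obstacle to be the second step: carefully pinning down $(T^*Z_{\bn})^\sigma$ and $(\mu_{\bn}^{-1}(0))^\sigma$ for each of the two (three, when $i=j$) types of wheel specialization, and making sure the localization theorem is quoted in a version robust to the singularity (or derived structure) of $\mu_{\bn}^{-1}(0)$.
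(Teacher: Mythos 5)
Your proposal is correct, and it takes a genuinely different (though closely related) route from the paper's. Both arguments hinge on the same geometric fact: on the affine plane $\BA^2 = \{\phi_e \in \BC\cdot E_{bc},\ \phi_e^* \in \BC\cdot E_{ab},\ \text{rest} = 0\}$, the moment map restricts to $\pm xy\, E_{ac}$, with $E_{ac}\neq 0$ exactly because $a\neq c$, so $\mu_{\bn}^{-1}(0)\cap \BA^2 = \{xy=0\}$. The paper sets $V_e = (\BC^*)^2$ (the open dense subset of this $\BA^2$), observes $V_e\cap\mu_{\bn}^{-1}(0)=\emptyset$, concludes $j^*\circ i_* = 0$ by disjointness of supports, and reads off the wheel condition from the explicit computation of $K_{T\times G_{\bn}}(V_e)$ as the quotient of $\text{Rep}_T[z^{\pm 1}_{\bullet\bullet}]$ by the wheel ideal. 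You instead recognize $\BA^2$ as the full fixed locus of the codimension-two subtorus $\sigma$, apply Thomason's localization theorem for $\sigma$ acting on the singular scheme $\mu_{\bn}^{-1}(0)$, and kill $i_*(\alpha)|_\sigma$ by observing that the pushforward $K_\sigma(\{xy=0\})\to K_\sigma(\BA^2)$ vanishes identically, since the Koszul factors $1-[\text{wt}(x)^{-1}]$ and $1-[\text{wt}(y)^{-1}]$ are both zero (the two weights being trivial on $\sigma$), then descend from the localized ring to $\text{Rep}_\sigma$ by integrality. Your approach is more canonical and systematic — the subtorus $\sigma$ and its fixed locus are dictated by the wheel specialization itself, and the argument would transfer mechanically to higher-variable wheel conditions such as those of Section \ref{sec:tori} — at the cost of invoking the full localization theorem for equivariant $G$-theory of singular schemes; the paper's argument is more elementary, trading that machinery for the hands-on production and analysis of the locally closed set $V_e$. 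Both arguments require the same careful identification of the two trivial-weight coordinate directions $x=(\phi_e)_{bc}$, $y=(\phi_e^*)_{ab}$, and the same use of $a\neq c$ in the moment-map computation, which you carry out correctly.
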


\medskip

\begin{proof} \emph{(closely following Theorem 2.9 and Corollary 2.10 of \cite{Zhao})}: we need to prove that for any $\alpha \in K_{\bn}$, the Laurent polynomial $\iota(\alpha)$ satisfies the wheel conditions \eqref{eqn:wheel}. To this end, let us recall that $\iota$ arises from the closed embedding:
$$
\mu_{\bn}^{-1}(0) \stackrel{i}\hookrightarrow T^* Z_{\bn} = \left\{ \Big( \phi_e : \BC^{n_i} \leftrightharpoons \BC^{n_j} : \phi^*_e \Big)_{\forall e = \oij \in E} \right\}
$$
For any edge $e = \oij$, let us also consider the following locally closed subset:
\begin{equation}
\label{eqn:locally closed}
V_e = \left\{\Big(\phi_e \in \BC^* \cdot E_{bc}, \ \phi_e^* \in \BC^* \cdot E_{ab}, \ \phi_{e'} = \phi^*_{e'} = 0 \ \forall e' \neq e \Big) \right\} \stackrel{j}\hookrightarrow T^*Z_{\bn}
\end{equation}
where $E_{ab}$ denotes the matrix with a single 1 at the intersection of row $a$ and column $b$ (with respect to the standard basis of $\BC^{n_i}$ and $\BC^{n_j}$) and zeroes everywhere else. Because $a \neq c$, it is easy to observe that $\mu_{\bn}$ does not annihilate any point of $V_e$, hence:
$$
\mu_{\bn}^{-1}(0) \cap V_e = \emptyset
$$
and so:
\begin{equation}
\label{eqn:circ 1}
j^*\circ i_* = 0 \qquad \Rightarrow \qquad j^*(i_*(\alpha)) = 0, \quad \forall \alpha \in K_{\bn}
\end{equation}
Let $\pi : T^*Z_{\bn} \rightarrow (\text{point})$ be the usual projection, and thus we have:
\begin{equation}
\label{eqn:circ 2}
i_*(\alpha) = \pi^*(R(\dots,z_{k1},\dots,z_{kn_k},\dots))
\end{equation}
where $R = \iota(\alpha)$. We need to show that $R$ satisfies the wheel conditions with respect to the edge $e$. Formulas \eqref{eqn:circ 1} and \eqref{eqn:circ 2} imply:
$$
j^* ( \pi^*(R)) = 0 \qquad \Rightarrow \qquad \rho^*(R) = 0
$$
where $\rho = \pi \circ j : V_e \rightarrow (\text{point})$ is the usual projection. However, $V_e \cong \BC^* \times \BC^*$, and the action of $T \times (\text{maximal torus of }G_{\bn})$ on the  two copies of $\BC^*$ is given via the characters:
$$
\frac {z_{jb}}{t_e z_{ic}} \qquad \text{and} \qquad \frac {t_e z_{ia}}{q z_{jb}}
$$
respectively. Therefore: 
$$
K_{T \times G_{\bn}}(V_e) = \text{Rep}_T[z^{\pm 1}_{k1},\dots,z_{k n_k}^{\pm 1}]_{k \in I} \Big/ \left(z_{ia} - \frac {qz_{jb}}{t_e}, z_{jb} - t_e z_{ic} \right)
$$
The fact that $\rho^*(R) = 0$ simply means that $R$ lies in the ideal generated by $z_{ia} - \frac {qz_{jb}}{t_e}$ and $z_{jb} - t_e z_{ic}$, which is precisely the first wheel condition in \eqref{eqn:wheel}. To obtain the second wheel condition, one runs the same argument as above, but switching the roles of $E_{ab}$ and $E_{bc}$ in \eqref{eqn:locally closed}. 

\end{proof}

\subsection{} We will now consider the localized versions of the algebras in the previous Subsections, namely:
$$
K_\loc = K \bigotimes_{\text{Rep}_T} \BF \qquad \text{and} \qquad \CV = \CV_{\text{int}} \bigotimes_{\text{Rep}_T} \BF
$$
where $\BF = \BQ(q,t_e)_{e \in E}$ is the fraction field of $\text{Rep}_T$. Define similarly $\CS \subset \CV$ as the subalgebra of Laurent polynomials with $\BF$-coefficients which satisfy the wheel conditions \eqref{eqn:wheel}. We have the following analogue of the algebra homomorphism of Proposition \ref{prop:yu}:
$$
\iota : K_\loc \rightarrow \CS \subset \CV
$$
By construction, $\iota$ preserves the grading by $\bn$. The smallest non-trivial degrees are:
$$
\boldsymbol{\varsigma}_i = (\underbrace{0,\dots,0,1,0,\dots,0}_{1 \text{ on }i\text{-th spot}})
$$
Because $\mu_{\boldsymbol{\varsigma}_i}^{-1}(0)$ is an affine space whose dimension is twice the number of loops at the vertex $i \in I$, it is easy to see that:
\begin{equation}
\label{eqn:degree 1}
K_{\boldsymbol{\varsigma}_i,\loc} \cong K_{T \times \BC^*}(\text{point})_{\text{loc}} \cong \BF[z^{\pm 1}] \cong \CS_{\boldsymbol{\varsigma}_i}
\end{equation}
Let us consider, following \cite{SV}, the spherical subalgebras:
$$
\mathring{K}_\loc \subseteq K_\loc \qquad \text{and} \qquad \oCS \subseteq \CS
$$
which are by definition the subalgebras generated by the graded summands \eqref{eqn:degree 1} for all $i \in I$. Our main result, whose proof will occupy the next Section, is: \\

\begin{theorem}
\label{thm:main}

We have $\oCS = \CS$. \\

\end{theorem}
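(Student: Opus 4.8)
# Proof Proposal

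The plan is to prove $\oCS = \CS$ by induction on the total degree $|\bn| = \sum_{i \in I} n_i$, showing that any wheel-condition-satisfying Laurent polynomial $R$ in degree $\bn$ can be written as a sum of shuffle products of lower-degree elements of $\CS$, which by induction lie in $\oCS$. The base case $|\bn| \le 1$ is exactly \eqref{eqn:degree 1}. For the inductive step, the key is to understand what constraints the wheel conditions impose on the ``leading behavior'' of $R$ as certain variables are sent to infinity (or zero), since the shuffle product is governed precisely by such limits: factoring $R$ through a shuffle product $A * B$ corresponds to controlling $R$ in a region where the $z_{ia}$-variables split into two groups with one group dominating.

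First I would set up the combinatorial bookkeeping from Section \ref{sec:proof} (words, orderings on monomials, leading terms). The idea, following the strategy of \cite{NT}, is to define for each $R \in \CS_\bn$ a notion of ``leading term'' with respect to a total order on the exponents of the variables, and to show that the leading terms of shuffle products of degree-$1$ generators already span all possible leading terms allowed by the wheel conditions. Concretely, one fixes an ordering of all the variables $z_{ia}$ (across all vertices $i$ and all indices $a$), considers the limit in which they go to infinity at vastly different rates, and records the resulting monomial; the wheel conditions \eqref{eqn:wheel} translate into vanishing/divisibility constraints on which monomials can appear. One then produces, for each admissible leading monomial, an explicit element of $\oCS$ — built as an iterated shuffle product of the one-variable generators $z^k \in \CS_{\bs_i}$ — having that same leading term. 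Subtracting it from $R$ strictly decreases the leading term, and a descent/Noetherian argument finishes the induction.

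The key computation will be the evaluation of an iterated shuffle product $f_1(z_{i_1}) * f_2(z_{i_2}) * \dots * f_n(z_{i_n})$ of one-variable functions, in the asymptotic regime $z_{i_1} \gg z_{i_2} \gg \dots \gg z_{i_n}$: in that limit the symmetrization in \eqref{eqn:shuf prod} is dominated by a single permutation, and the answer is (a monomial times) $\prod_{a} f_a(z_{i_a}) \cdot \prod_{a < b} \zeta_{i_a i_b}(z_{i_a}/z_{i_b})$ evaluated at its own leading term. I would need to check that, as the $f_a$'s and the ordering of vertices vary, these leading terms exhaust precisely the set of monomials compatible with symmetry and with the wheel conditions — i.e. that the wheel conditions are not merely necessary but, at the level of leading terms, also sufficient for membership in the span of shuffle products.

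The main obstacle I anticipate is exactly this last sufficiency claim: showing that the wheel conditions are \emph{not too weak}, i.e. that every leading monomial they permit is actually realized by an element of $\oCS$. This requires a careful combinatorial analysis of how the factors $(1 - t_e x)$, $(1 - qx/t_e)$ and $(1-xq^{-1})/(1-x)$ in $\zeta_{ij}$ \eqref{eqn:def zeta} constrain exponents when several variables at the same vertex, or variables joined by an edge $e = \oij$, collide in the pattern $z_{ia} : z_{jb} : z_{ic}$ of the wheel. One must verify that the ``gaps'' in exponents forced by a genuine $R \in \CS$ match exactly the gaps produced by the $\zeta$-factors in an iterated shuffle product — equivalently, that there are no ``extra'' wheel-type relations beyond \eqref{eqn:wheel}. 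Handling edge loops ($i = j$, where one gets the Jordan-quiver-type condition $a \neq b \neq c$) and multiple edges simultaneously, in a way uniform in the quiver $Q$, is where the argument of \cite{NT} will need the most adaptation, since \loccit could lean on the specific structure of quantum loop groups that is unavailable here.
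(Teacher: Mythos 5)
Your proposal takes a genuinely different route from the paper. You propose a leading-term descent with induction on $|\bn|$: match the leading monomial of $R \in \CS_{\bn}$ with (the leading monomial of) an iterated shuffle product of the one-variable generators, subtract, and iterate. The paper instead proceeds by duality: it constructs a pairing $\CS \otimes \oCS^{\text{op}} \to \BF$ via iterated contour integrals (Proposition \ref{prop:pairing}), proves it is non-degenerate in the first argument --- this is where the wheel conditions enter, in a careful residue analysis (Claim \ref{claim:main}/\ref{claim:residues}) --- decomposes $\oCS$ into a direct sum of \emph{finite-dimensional} mutually orthogonal pieces indexed by the connected components of a graph $G$ on exponent sequences, and then shows any $R \in \CS_{\bn}$ is paired-matched by some $R' \in \oCS$, whence $R = R'$ by non-degeneracy. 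The leading-word machinery (Subsection \ref{sub:pbw}, Proposition \ref{prop:standard is leading}) appears in the paper as a \emph{corollary} of the theorem, not as the engine of its proof. Your approach is closer in spirit to the Lyndon-word technology of \cite{NT}, which the paper explicitly departs from for the sake of generality; the pairing approach buys a clean linear-algebra endgame and isolates the hard combinatorics into two discrete lemmas.

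However, your proposal has a genuine gap that you have not flagged: the \emph{termination} of the descent. You write that ``a descent/Noetherian argument finishes the induction,'' but in the setting of Laurent polynomials this is exactly what fails naively: the relevant order on words (equivalently, on exponent tuples $(d_1,\dots,d_n) \in \BZ^n$) is not well-founded, since exponents are unbounded below, so subtracting one shuffle product and passing to a strictly smaller leading word can in principle be iterated forever. Nothing in $\CS_{\bn}$ is Noetherian in a way that rescues this. The paper addresses precisely this issue, in two technical steps that your plan would also need: Lemma \ref{lem:finite} (for fixed degree $(\bn,d)$ and a fixed lower bound, there are only finitely many non-increasing words) and --- the real work --- Lemma \ref{lem:combi}, the finiteness of connected components of the graph $G$ from Subsection \ref{sub:graph}, whose proof is a delicate combinatorial induction. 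Without an analogue of these two lemmas, your descent has no reason to stop. I would also note that the ``main obstacle'' you anticipate (realizing every admissible leading monomial by some $e_w$) is real but is arguably the \emph{more} tractable half: the computation \eqref{eqn:pair e and f}--\eqref{eqn:non-zero} essentially delivers it once the pairing is in place, whereas establishing leading-word realizability directly, without already having $\oCS = \CS$ and the dual basis $\{e^w\}$, risks circularity with Proposition \ref{prop:standard is leading}.

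Finally, a smaller point: the inductive framing ``write $R$ as a sum of shuffle products of lower-degree elements of $\CS$'' is harmless but also unnecessary; once the leading-term descent works and terminates, it directly exhibits $R$ as a polynomial in the degree-one generators, which is the definition of membership in $\oCS$, with no need for an outer induction on $|\bn|$.
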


\begin{remark}

Our main motivation for Theorem \ref{thm:main} is Proposition 1.14 of \cite{SV}, which pertains to the situation when $Q$ is the quiver with one vertex and $g$ loops. In this case, if we let $\oCS_n = \CS_n \cap \oCS$ for any $n \in \BN$, then \loccit showed that:
$$
\oCS_n \supset \CS_n \cdot \prod_{1 \leq i < j \leq n} (z_i-z_j)^{\frac {n!}2} 
$$
In other words, the spherical subalgebra $\oCS$ is sandwiched between the shuffle algebra $\CS$ (which is defined via wheel conditions) and a certain principal ideal within. Theorem \ref{thm:main} shows that one half of this sandwich is an equality. \\

\end{remark}

\subsection{} 

As a consequence of Theorem \ref{thm:main}, we conclude that the map:
\begin{equation}
\label{eqn:iota 2}
\iota : K_{\loc} \rightarrow \CS
\end{equation}
is surjective (simply because the right-hand side is generated by its degree $\boldsymbol{\varsigma}_i$ pieces, as $i$ runs over $I$, and all of these pieces also lie in $K_{\loc}$). However, Varagnolo-Vasserot showed in \cite[Proposition 2.4.4]{VV} that the map $\iota$ is injective \footnote{While \loccit prove the injectivity of $\iota$ for the version of the K-HA supported on a certain nilpotent Lagrangian substack of $T^*\fZ_{\bn}$, the result also holds for the usual K-HA after localization by appealing to Lemma 2.4.1 (d) of \loccit}. We conclude: \\

\begin{corollary}
\label{cor:iso}

The map $\iota$ induces an isomorphism $K_{\eloc} \cong \CS$. \\

\end{corollary}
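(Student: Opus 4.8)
The plan is to obtain the isomorphism by combining the surjectivity furnished by Theorem \ref{thm:main} with the injectivity of $\iota$ established by Varagnolo--Vasserot, exploiting that $\iota$ is a graded algebra homomorphism which is already bijective in the minimal degrees $\bs_i$.

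First I would assemble the structural input on $\iota\colon K_\loc\to\CS$. It is a homomorphism of $\BF$-algebras: this is the localization of Proposition \ref{prop:homomorphism}, together with Proposition \ref{prop:yu}, which guarantees that the image lands in $\CS$. It is $\nn$-graded by construction. And in each degree $\bs_i$ it restricts to an isomorphism $K_{\bs_i,\loc}\xrightarrow{\ \sim\ }\CS_{\bs_i}$, by \eqref{eqn:degree 1}. For surjectivity one then argues purely formally: by definition $\oCS$ is the $\BF$-subalgebra of $\CS$ generated by the summands $\CS_{\bs_i}$, $i\in I$, and Theorem \ref{thm:main} gives $\oCS=\CS$; since $\iota$ is an algebra map whose image contains every $\CS_{\bs_i}$, its image contains the subalgebra they generate, i.e.\ all of $\CS$ (indeed already $\iota(\mathring{K}_\loc)=\oCS=\CS$). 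Hence $\iota$ is onto.

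For injectivity I would invoke \cite[formula (2.40)]{VV}. The one point that is not purely formal is that \loccit prove injectivity for the variant of the $K$-theoretic Hall algebra supported on a nilpotent Lagrangian substack of $T^*\fZ_\bn$ rather than for the localized K-HA used here; this transfer is exactly what \cite[Lemma 2.4.2(d)]{VV} provides (as in the footnote above). Granting it, $\iota$ is a bijective algebra homomorphism, hence the desired isomorphism $K_\loc\cong\CS$ (and also $\mathring{K}_\loc=K_\loc$). Thus the only step requiring genuine care is this last transfer of injectivity between the two incarnations of the Hall algebra; everything else reduces to the definition of $\oCS$ and to Theorem \ref{thm:main}, which is where the substantive work of the paper lies.
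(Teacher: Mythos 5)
Your argument matches the paper's own proof: surjectivity comes from Theorem \ref{thm:main} together with the fact that $\iota$ is a graded algebra homomorphism that already surjects onto each $\CS_{\bs_i}$ (via \eqref{eqn:degree 1}), and injectivity is imported from \cite[formula (2.40)]{VV} with the same caveat about transferring from the nilpotent Lagrangian version via \cite[Lemma 2.4.2(d)]{VV}. The reasoning is correct and takes essentially the same route as the paper.
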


\noindent At this point, a natural question is whether the non-localized map $K \rightarrow \CS_{\text{int}}$ is also an isomorphism. The answer to this question is, probably, no. Indeed, while Yu Zhao's Proposition \ref{prop:yu} shows that the 3-variable wheel conditions are necessary for describing $\iota(K)$ as a subset of $\CV_{\text{int}}$, it is quite possible that they are not sufficient. In fact, it is possible that wheel conditions involving more than 3 variables exist, and they might be non-redundant in $\CS_{\text{int}}$ and redundant in $\CS$. \\





\section{The shuffle algebra and combinatorics of words}
\label{sec:proof}

\medskip

\subsection{} We will now develop the combinatorial tools that we need to prove Theorem \ref{thm:main}. Many of the ideas herein have been explored in the context of quantum loop groups in \cite{NT}, building upon the work of \cite{LR, L, R}, but we will also introduce some new features that are key to dealing with infinite-dimensional vector spaces. This will allow us to give a proof of Theorem \ref{thm:main} which does not rely on any external features, and has the potential of being applicable to other types of shuffle algebras. \\

\noindent Given our quiver $Q$, let us consider the following symbols for all $i \in I$ and $d \in \BZ$:
\begin{align*}
&e_{i,d} = z_{i1}^d \in \CS_{\bvs_i} \subset \CS
&f_{i,d} = z_{i1}^d \in \CS^{\op}_{\bvs_i} \subset \CS^{\op}
\end{align*}
Recall that $\bvs_i \in \nn$ is the $n$-tuple of integers with a single 1 on the $i$-th position, and zeroes everywhere else. By definition, the $e$'s (respectively the $f$'s) generate the subalgebra $\oCS$ (respectively $\oCS^\op$). For any $\bn = (n_i)_{i \in I} \in \nn$, the number:
$$
n = \sum_{i \in I} n_i
$$
will be called the \textbf{length} of $\bn$. The algebra $\CS$ is graded by $\nn \times \BZ$, with:
$$
\deg R = (\bn,d)
$$
for any $R(\dots,z_{i1},\dots,z_{in_i},\dots) \in \CS$ of homogeneous degree $d$. Let:
$$
\CS_{\bn} \subset \CS
$$
denote the subspace of elements whose degree is contained in $\{\bn\} \times \BZ$. 

\subsection{} 

The following pairing will be one of our main tools. Let $Dz = \frac {dz}{2\pi i z}$. Whenever we write $\int_{|z_1| \ll \dots \ll |z_n|}$ we are referring to a contour integral taken over concentric circles around the origin in the complex plane (i.e. an iterated residue at $0$). \\

\begin{proposition}
\label{prop:pairing}

There is a pairing:
\begin{equation}
\label{eqn:pairing}
\CS \otimes \oCS^{\emph{op}} \xrightarrow{\langle \cdot , \cdot \rangle} \BF
\end{equation}
given for all $R \in \CS_{\bn}$ and all $i_1,\dots,i_n \in I$, $d_1,\dots,d_n \in \BZ$ by: 
\begin{multline}
\label{eqn:pairing formula}
\Big \langle R, f_{i_1,-d_1} * \dots * f_{i_n,-d_n} \Big \rangle = \\ = \int_{|z_1| \ll \dots \ll |z_n|} \frac {z_1^{-d_1}\dots z_n^{-d_n} R(z_1,\dots,z_n)}{\prod_{1\leq a < b \leq n} \zeta_{i_a i_b} \left( \frac {z_a}{z_b} \right)} \prod_{a = 1}^n Dz_a
\end{multline}
if $\bs_{i_1}+\dots+\bs_{i_n} = \bn$, and 0 otherwise (implicit in the notation \eqref{eqn:pairing formula} is that the symbol $z_a$ is plugged into one of the variables $z_{i_a \bullet}$ of $R$, for all $a \in \{1,\dots,n\}$). \\

\noindent Moreover, the pairing \eqref{eqn:pairing} is non-degenerate in the first argument, i.e.:
\begin{equation}
\label{eqn:non-deg}
\Big \langle R, \oCS^{\emph{op}} \Big \rangle = 0 \quad \Rightarrow \quad R = 0
\end{equation}

\medskip

\end{proposition}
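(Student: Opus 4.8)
To define the pairing we need to check that formula \eqref{eqn:pairing formula} is well-defined, i.e. that the iterated residue does not depend on the order in which the arguments $f_{i_a,-d_a}$ are presented — or rather, we need to check that it factors through the shuffle product, so that the assignment on the generators $f_{i,d}$ extends to a linear functional on all of $\oCS^{\op}$. First I would observe that, by definition of the shuffle product \eqref{eqn:shuf prod}, the element $f_{i_1,-d_1}*\dots*f_{i_n,-d_n}\in\oCS^{\op}$ is a symmetrization of the monomial $z_1^{-d_1}\dots z_n^{-d_n}$ times a product of $\zeta$-factors (with the order of the $\zeta$-factors reversed relative to \eqref{eqn:shuf prod}, because we are in $\CS^{\op}$). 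The key point is that the contour $|z_1|\ll\dots\ll|z_n|$ together with the division by $\prod_{a<b}\zeta_{i_ai_b}(z_a/z_b)$ exactly cancels the normalization, so that plugging $f_{i_1,-d_1}*\dots*f_{i_n,-d_n}$ into the first slot of \eqref{eqn:pairing formula} reproduces the same expression. One then checks that a transposition $z_a\leftrightarrow z_{a+1}$ inside the symmetrization, which changes the order of the contour, does not change the residue: the potential discrepancy is a residue at $z_a=z_{a+1}$ (or at the pole of $\zeta_{i_ai_{a+1}}$), and this residue vanishes precisely because $R$ satisfies the wheel conditions \eqref{eqn:wheel} — more precisely, already because $R$ is a Laurent polynomial (no pole at $z_a=z_{a+1}$ to begin with) the "straightening" of the contour introduces no correction. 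This is the standard Feigin–Odesskii-type computation; I would present it as a lemma.

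**Non-degeneracy.** For \eqref{eqn:non-deg}, suppose $R\in\CS_{\bn}$ pairs trivially with all of $\oCS^{\op}$. Fix a total order on $I$ and, for each sequence $(i_1,\dots,i_n)$ with $\bs_{i_1}+\dots+\bs_{i_n}=\bn$, consider the pairings $\langle R,\, f_{i_1,-d_1}*\dots*f_{i_n,-d_n}\rangle$ as $(d_1,\dots,d_n)$ ranges over $\BZ^n$. Expanding the iterated residue in \eqref{eqn:pairing formula}: the factor $\prod_{a<b}\zeta_{i_ai_b}(z_a/z_b)^{-1}$, expanded in the region $|z_1|\ll\dots\ll|z_n|$, is a power series in $z_a/z_b$ for $a<b$; multiplying by $R(z_1,\dots,z_n)$ (a Laurent polynomial) and extracting the coefficient of $z_1^{d_1}\dots z_n^{d_n}$ via the residue, we see that the collection of all these pairings, for all $(i_\bullet)$ and all $(d_\bullet)$, records all the Laurent coefficients of $R$ after multiplication by the fixed invertible (in the appropriate completion) series $\prod_{a<b}\zeta_{i_ai_b}(z_a/z_b)^{-1}$. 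Since that series is invertible, vanishing of all these coefficients forces $R=0$. The one subtlety is that $R$ is a symmetric (in each block $z_{i1},\dots,z_{in_i}$) Laurent polynomial and we are plugging its variables into ordered slots $z_1,\dots,z_n$; but since we range over all orderings $(i_1,\dots,i_n)$ compatible with $\bn$, and within a block we range over all exponent vectors, we recover every monomial of $R$, so symmetry causes no loss.

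**Main obstacle.** I expect the well-definedness of the pairing — showing that \eqref{eqn:pairing formula} descends from the free algebra on the $f_{i,d}$ to $\oCS^{\op}$, i.e. that it is insensitive to the shuffle relations — to be the real content, since non-degeneracy is then a fairly direct power-series argument. Concretely, the hard part is bookkeeping the $\zeta$-factors and the contour: one must verify that the product of $\zeta_{i_ai_b}(z_a/z_b)$ appearing in the shuffle product $f_{i_1,-d_1}*\dots*f_{i_n,-d_n}$, after dividing by the $\prod\zeta$ in the denominator of \eqref{eqn:pairing formula} and symmetrizing, collapses to exactly the claimed integrand, and that no spurious residues are picked up when the nested contour is reordered under the symmetrization. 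This is where the precise form of $\zeta_{ij}$ in \eqref{eqn:def zeta} and the assumption $R\in\CS$ (Laurent polynomial, hence no diagonal poles) are used; the wheel conditions themselves are not needed for \emph{this} proposition, only for later compatibility statements, so I would be careful to invoke only what is necessary. A clean way to organize it is: (i) write $f_{i_1,-d_1}*\dots*f_{i_n,-d_n}$ explicitly as a sum over $\sigma\in S_n$ of $\sigma$-twisted monomials times $\zeta$-products; (ii) plug into the residue and use the $S_n$-symmetry of $R$ together with a change of variables $z_a\mapsto z_{\sigma(a)}$ to reduce all $n!$ terms to the identity term, each contributing equally; (iii) check the contour can be taken nested in the fixed order $|z_1|\ll\dots\ll|z_n|$ for every term without crossing poles, which holds because the only poles of the integrand in the $z_a$'s are at $0$, $\infty$, and at zeros of $\zeta$ which, for generic $q,t_e$, do not lie on the relevant loci. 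Assembling these gives the formula and its independence of presentation.
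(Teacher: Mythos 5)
Your non-degeneracy argument is correct and matches the paper's: once the pairing is known to be well-defined, non-degeneracy in the first slot is just the observation that the power series expansion of $R/\prod_{a<b}\zeta_{i_ai_b}(z_a/z_b)$ in the region $|z_1|\ll\dots\ll|z_n|$ determines $R$.

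The well-definedness argument, however, has a genuine gap, and it is exactly where you decided the wheel conditions were dispensable. You write that the only ``in-between'' residue is at $z_a=z_{a+1}$ and that it vanishes because $R$ is a Laurent polynomial, and later that the zeros of $\zeta$ ``for generic $q,t_e$ do not lie on the relevant loci.'' Both claims are false, and for the same reason: the poles of the integrand come from the denominator $\prod_{a<b}\zeta_{i_ai_b}(z_a/z_b)$, not from $R$. The function $1/\zeta_{i_ai_b}(z_a/z_b)$ has poles at $z_a=qz_b$ (if $i_a=i_b$), $z_a=z_b/t_e$ (for $e=\overrightarrow{i_ai_b}$) and $z_a=t_ez_b/q$ (for $e=\overrightarrow{i_bi_a}$). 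In the regime $|q|<|t_e|<1$ these loci are strictly between concentric contours, so they \emph{are} crossed when one passes from the nested contour $|z_1|\ll\dots\ll|z_n|$ to a symmetric one, which is precisely the passage one needs in order to invoke $\mathrm{Sym}[P]=0$. Worse, once a first residue at $z_b=qz_{b'}$ has been taken, the surviving rational function acquires iterated poles at $z_a=q^xz_{b'}$, $z_a=q^xt_ez_{b'}$ and $z_a=q^xz_{b'}/t_{e'}$, and the last two families are killed \emph{only} because the numerator $p=\bigl(\sum\text{coeff}\cdot z^{-d}\bigr)R$ inherits the 3-variable wheel conditions \eqref{eqn:wheel} from $R$. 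This is the content of the paper's Claims \ref{claim:main} and \ref{claim:residues}: the quantities $X_m$ of \eqref{eqn:xm} interpolate between the nested integral and a symmetric one, and at each step $X_m=X_{m-1}$ the contour deformation catches only the pole $z_{m-1}=q^{|A_s|}z_{\alpha_s}$, because the $t_e$-type poles are cancelled by \eqref{eqn:wheel 1}--\eqref{eqn:wheel 2}.

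So the conceptual error is the assertion that ``the wheel conditions themselves are not needed for this proposition.'' They are the whole point: without them the iterated residue would pick up contributions at the edge-dependent poles, the formula would not descend through the shuffle relations in $\oCS^{\op}$, and the pairing would not be well-defined on $\CS\otimes\oCS^{\op}$ (it would only be defined on the smaller space of $R$'s for which those extra residues happen to vanish). To repair your outline you would need to (a) correctly identify the poles of the integrand as coming from the zeros of $\zeta_{ij}$ in the denominator, which do lie between the contours for $|q|<|t_e|<1$; (b) set up an inductive contour deformation as in \eqref{eqn:xm}, keeping track of fair partitions and the iterated residues at $q$-strings; and (c) use the wheel conditions on $R$ (hence on $p$) to show that only the ``good'' poles $z_{m-1}=q^{|A_s|}z_{\alpha_s}$ with $i_{m-1}=i_{\alpha_s}$ survive. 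That is the argument in the paper, and I do not see a way to shortcut it by polynomiality of $R$ alone.
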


\begin{proof} In order to prove that \eqref{eqn:pairing formula} yields a well-defined pairing, we must show that if there exists a linear relation:
\begin{multline}
\label{eqn:have}
\mathop{\sum_{i_1,\dots,i_n \in I}}_{d_1,\dots,d_n \in \BZ} \text{coeff} \cdot f_{i_1,-d_1} * \dots * f_{i_n,-d_n} = 0 \quad \Leftrightarrow \\ \Leftrightarrow \quad \Sym \left[ \mathop{\sum_{i_1,\dots,i_n \in I}}_{d_1,\dots,d_n \in \BZ} \frac { \text{coeff} \cdot  z_1^{-d_1} \dots z_n^{-d_n}}{\prod_{1 \leq a < b \leq n} \zeta_{i_ai_b} \left(\frac {z_a}{z_b} \right)} \right] = 0
\end{multline}
in $\oCS^\op$ (recall that ``Sym" symmetrizes variables $z_a$ and $z_b$ if and only if $i_a = i_b$), then this relation is also reflected in the right-hand side of \eqref{eqn:pairing formula}:
\begin{equation}
\label{eqn:need}
\int_{|z_1| \ll \dots \ll |z_n|} \mathop{\sum_{i_1,\dots,i_n \in I}}_{d_1,\dots,d_n \in \BZ} \frac { \text{coeff} \cdot z_1^{-d_1}\dots z_n^{-d_n}}{\prod_{1\leq a < b \leq n} \zeta_{i_ai_b} \left( \frac {z_a}{z_b} \right)} \cdot R(z_1,\dots,z_n) \prod_{a = 1}^n Dz_a = 0
\end{equation}
for any element $R \in \CS_{\bs_{i_1}+\dots+\bs_{i_n}}$ (implicit in the notation \eqref{eqn:need} is that the symbol $z_a$ is plugged into one of the variables $z_{i_a \bullet}$ of $R$, for all $a \in \{1,\dots,n\}$). The fact that \eqref{eqn:have} implies \eqref{eqn:need} is a particular case of the following statement: \\

\begin{claim}
\label{claim:main}

Consider any rational function of the form:
\begin{equation}
\label{eqn:form of p}
P(z_1,\dots,z_n) = \frac {p(z_1,\dots,z_n)}{\prod_{1\leq a < b \leq n} \zeta_{i_a i_b} \left( \frac {z_a}{z_b} \right)}
\end{equation}
where $p$ is a Laurent polynomial (maybe non-symmetric) which satisfies the wheel conditions in the following form:
\begin{align}
&p \Big|_{z_a = \frac {qz_b}{t_e} = q z_c} = 0 \label{eqn:wheel 1} \\
&p \Big|_{z_a = t_{e'} z_b = q z_c} = 0 \label{eqn:wheel 2}
\end{align}
whenever $a\neq c$ and $i_a = i_c$ (and further $a \neq b \neq c$ if $i_a = i_b = i_c$), and for every edge $e = \overrightarrow{i_ai_b}$ and $e' = \overrightarrow{i_bi_a}$, respectively. Then:
\begin{equation}
\label{eqn:linear functional}
\int_{|z_1| \ll \dots \ll |z_n|} P(z_1,\dots,z_n) \prod_{a=1}^n Dz_a \quad \text{is a linear functional of} \quad \emph{Sym}[P]
\end{equation}

\medskip

\end{claim}

\noindent It remains to prove Claim \ref{claim:main}. In order to do so, let us assume that $\{q,t_e\}_{e \in E}$ are complex numbers whose absolute values satisfy the inequality:
\begin{equation}
\label{eqn:inequality parameters}
|q| < |t_e| < 1 \quad \forall e \in E
\end{equation}
This restriction is not essential for what follows, as all quantities that will arise will be rational functions in $\{q,t_e\}_{e \in E}$. However, it is a useful linguistic device for encoding all the residues that we will encounter in the following argument. Then let us consider the following expressions for all $m \in \{1,\dots,n\}$:
\begin{multline}
X_m = \sum^{\text{fair partition}}_{\{m,\dots,n\} = A_1 \sqcup \dots \sqcup A_t} \int_{|z_1| \ll \dots \ll |z_{m-1}| \ll |z_{\alpha_1}| = \dots = |z_{\alpha_t}|} \\ \Big( \dots \underset{z_{\gamma_s} = z_{\alpha_s} q^2}{\text{Res}} \Big( \underset{z_{\beta_s} = z_{\alpha_s} q}{\text{Res}}  P(z_1,\dots,z_n) \Big) \dots \Big) \prod_{s=1}^t Dz_{\alpha_s} \prod_{a=1}^{m-1} Dz_a \label{eqn:xm}
\end{multline}
In the notation above, we assume that each set $A_s$ that makes up the fair partition is of the form $\{\dots < \gamma_s < \beta_s < \alpha_s\}$. The partition being ``fair" means that each of the sets $A_s$ has the property that all of their elements correspond to one and the same vertex of the quiver, i.e. $i_{\alpha_s} = i_{\beta_s} = i_{\gamma_s} = \dots$ for all $s \in \{1,\dots,t\}$. \\


\begin{claim}
\label{claim:residues}

We have $X_m = X_{m-1}$ for all $m \in \{2,\dots,n\}$. \\

\end{claim} 

\noindent Let us first show how Claim \ref{claim:residues} implies Claim \ref{claim:main}. By iterating Claim \ref{claim:residues} a number of $n-1$ times, we conclude that $X_n = X_1$, or more explicitly:
\begin{equation}
\label{eqn:before}
\int_{|z_1| \ll \dots \ll |z_n|} P(z_1,\dots,z_n) \prod_{a=1}^n Dz_a = 
\end{equation}
$$
=  \sum^{\text{fair partition}}_{\{1,\dots,n\} = A_1 \sqcup \dots \sqcup A_t}  \int_{|z_{\alpha_1}| = \dots = |z_{\alpha_t}|} \Big( \dots \underset{z_{\gamma_s} = z_{\alpha_s} q^2}{\text{Res}} \Big( \underset{z_{\beta_s} = z_{\alpha_s} q}{\text{Res}}  P(z_1,\dots,z_n) \Big) \dots \Big) \prod_{s=1}^t Dz_{\alpha_s}
$$
However, for any fixed fair partition $\{1,\dots,n\} = \bar{A}_1\sqcup \dots \sqcup \bar{A}_t$, we claim that (let us denote $\bar{A}_s = \{\dots < \bar{\gamma}_s < \bar{\beta}_s < \bar{\alpha}_s\}$ for all $s \in \{1,\dots,t\}$ in the formula below):
\begin{equation}
\label{eqn:after}
\int_{|z_{\bar{\alpha}_1}| = \dots = |z_{\bar{\alpha}_t}|} \Big( \dots \underset{z_{\bar{\gamma}_s} = z_{\bar{\alpha}_s} q^2}{\text{Res}} \Big( \underset{z_{\bar{\beta}_s} = z_{\bar{\alpha}_s} q}{\text{Res}}  P(z_1,\dots,z_n) \Big) \dots \Big) \prod_{s=1}^t Dz_{\bar{\alpha}_s} = 
\end{equation}
$$
\mathop{\sum^{\text{fair partition}}_{\{1,\dots,n\} = A_1 \sqcup \dots \sqcup A_t}}_{|A_s| = |\bar{A}_s|, \ i_{\alpha_s} = i_{\bar{\alpha}_s} \ \forall s} \int_{|z_{\alpha_1}| = \dots = |z_{\alpha_t}|} \Big( \dots \underset{z_{\gamma_s} = z_{\alpha_s} q^2}{\text{Res}} \Big( \underset{z_{\beta_s} = z_{\alpha_s} q}{\text{Res}}  P(z_1,\dots,z_n) \Big) \dots \Big) \prod_{s=1}^t Dz_{\alpha_s} 
$$
Indeed, as $\Sym[P]$ sums over all ways to permute the variables of $P$, the left-hand side of \eqref{eqn:after} sums over all iterated residues $z_{\alpha_s} = z_{\beta_s}q^{-1} = z_{\gamma_s}q^{-2} = \dots $ of $P$. The fact that such residues are only non-zero when $\alpha_s > \beta_s > \gamma_s > \dots$ and thus correspond to a fair partition (which yields the right-hand side of \eqref{eqn:after}) is a consequence of the specific denominators of $P$ that appear in \eqref{eqn:form of p}. Having proved \eqref{eqn:after}, the required \eqref{eqn:linear functional} follows from the fact that the right-hand side of equation \eqref{eqn:before} is a linear combination of the right-hand sides of equation \eqref{eqn:after}, and therefore the same holds true for the respective left-hand sides of these equations. \\

\noindent Let us now prove Claim \ref{claim:residues}. To this end, consider the contour integral formula\footnote{Note that our definition of the residue is $-$ the usual one.}:
$$
\int_{|z| \ll |w|} f(z,w) Dz Dw = \int_{|z| = |w|} f(z,w) Dz Dw + \sum_{|c| < 1} \int \left[ \underset{z = wc}{\text{Res}} f(z,w) \right] Dw
$$
for any homogeneous rational function $f$, all of whose poles are simple and of the form $z - wc$. Consider formula \eqref{eqn:xm}, and let us zoom in on the summand corresponding to a given partition $\{m,\dots,n\} = A_1 \sqcup \dots \sqcup A_t$. As we move the (smaller) contour of the variable $z_{m-1}$ toward the (larger) contours of the variables $z_{\alpha_1},\dots,z_{\alpha_t}$, one of two things can happen. The first thing is that the smaller contour reaches the larger ones, which leads to the partition:
$$
\{m-1,\dots,n\} = A_1\sqcup \dots \sqcup A_t \sqcup \{m-1\}
$$
in formula \eqref{eqn:xm} for $m$ replaced by $m-1$. The second thing is that the variable $z_{m-1}$ is ``caught" in a pole of the form $z_{m-1} = z_{\alpha_s} c$ for some $s \in \{1,\dots,t\}$ and some $|c|<1$. However, because the rational function $P$ is of the form \eqref{eqn:form of p}, then:
$$
P(z_1,\dots,z_n) = \frac {p(z_1,\dots,z_n)}{\# \prod_{s=1}^t \zeta_{i_{m-1}i_{\alpha_s}} \left(\frac {z_{m-1}}{z_{\alpha_s}} \right) \zeta_{i_{m-1}i_{\beta_s}} \left(\frac {z_{m-1}}{z_{\beta_s}} \right)\zeta_{i_{m-1}i_{\gamma_s}} \left(\frac {z_{m-1}}{z_{\gamma_s}} \right) ...}
$$
where $\#$ denotes various products of $\zeta$'s which will not be involved in the subsequent argument. As we take the iterated residue in the formula above, we obtain:
$$
\Big( \dots \underset{z_{\gamma_s} = z_{\alpha_s} q^2}{\text{Res}} \Big( \underset{z_{\beta_s} = z_{\alpha_s} q}{\text{Res}}  P(z_1,\dots,z_n) \Big) \dots \Big) = 
$$
$$
= \frac {p(z_1,\dots,z_n)|_{z_{\beta_s} = z_{\alpha_s} q, \ z_{\gamma_s} = z_{\alpha_s} q^2, \dots}}{\#|_{z_{\beta_s} = z_{\alpha_s} q, \ z_{\gamma_s} = z_{\alpha_s} q^2, \dots} \cdot \prod_{s=1}^t \zeta_{i_{m-1}i_{\alpha_s}} \left(\frac {z_{m-1}}{z_{\alpha_s}} \right) \zeta_{i_{m-1}i_{\alpha_s}} \left(\frac {z_{m-1}}{z_{\alpha_s} q} \right)\zeta_{i_{m-1}i_{\alpha_s}} \left(\frac {z_{m-1}}{z_{\alpha_s} q^2} \right) ...}
$$
(recall that $i_{\alpha_s} = i_{\beta_s} = i_{\gamma_s} = \dots$, by the very definition of a fair partition). By looking at the formula for $\zeta$ in \eqref{eqn:def zeta}, we see that the only pole caught in this process is $z_{m-1} = z_{\alpha_s} q^{|A_s|}$, and it only occurs if $i_{m-1} = i_{\alpha_s}$. This happens because: \\

\begin{itemize}[leftmargin=*]

\item all the poles of the form $z_{m-1} = z_{\alpha_s} q^x$ for $x \in \{1,\dots, |A_s|-1\}$ (which only occur if $i_{m-1} = i_{\alpha_s}$) are canceled by the denominators of the $\zeta$ factors \\

\item all the poles of the form $z_{m-1} = z_{\alpha_s}q^x t_e$ for $x \in \{0,\dots,|A_s|-2\}$ and $e = \overrightarrow{i_{\alpha_s}i_{m-1}}$ are canceled by the fact that $p$ satisfies the wheel conditions \eqref{eqn:wheel 1} \\

\item all the poles of the form $z_{m-1} = \frac {z_{\alpha_s} q^x}{t_{e'}}$ for $x \in \{1,\dots,|A_s|-1\}$ and $e' = \overrightarrow{i_{m-1}i_{\alpha_s}}$ are canceled by the fact that $p$ satisfies the wheel conditions \eqref{eqn:wheel 2} \\

\end{itemize}

\noindent \footnote{In the second and third bullets, we don't need to consider the poles corresponding to $x = -1$ and $x = 0$, respectively, due to the inequality \eqref{eqn:inequality parameters} and the fact that we are only looking for poles of the form $z_{m-1} = z_{\alpha_s} c$ for $|c| < 1$.} The residue at the pole $z_{m-1} = z_{\alpha_s} q^{|A_s|}$ leads to the partition:
$$
\{m-1,\dots,n\} = A_1 \sqcup \dots \sqcup A_{s-1} \sqcup \Big( A_s \sqcup \{m-1\} \Big) \sqcup A_{s+1} \sqcup \dots \sqcup A_t
$$
in formula \eqref{eqn:xm} for $m$ replaced by $m-1$. We thus conclude the proof of Claim \ref{claim:residues}, and with it, the proof of Claim \ref{claim:main}. This shows that the pairing \eqref{eqn:pairing} is well-defined. \\

\noindent The non-degeneracy of the pairing \eqref{eqn:pairing} is simply a restatement of the fact that if all coefficients of the power series expansion of the rational function: 
$$
\frac {R(z_1,\dots,z_n)}{\prod_{1 \leq a < b \leq n} \zeta_{i_ai_b} \left( \frac {z_a}{z_b} \right)}
$$
(in the limit $|z_1|\ll\dots\ll|z_n|$) vanish, then $R(z_1,\dots,z_n) = 0$. 

\end{proof}



\subsection{} If $\CS$ were finite-dimensional over $\BF$, then the non-degeneracy of the pairing \eqref{eqn:pairing} in the first argument would imply that $\dim \CS \leq \dim \oCS$. This would be enough to establish Theorem \ref{thm:main}. To adapt this approach to the infinite-dimensional $\CS$, we will decompose it into finite-dimensional pieces, and analyze those. To this end, let us recall some notions from \cite{NT}, which are in turn inspired by the constructions of \cite{LR, L, R} in the setting of Lie algebras and quantum groups. \\

\begin{definition}

Fix a total order on the set $I$ of vertices of the quiver $Q$. This induces a total order on the set of \textbf{letters}: 
$$
i^{(d)}
$$ 
(for all $i \in I$ and $d \in \BZ$) by:
\begin{equation}
\label{eqn:lex affine}
i^{(d)} < j^{(e)} \quad \text{if} \quad 
\begin{cases} d>e \\ \text{ or } \\ d = e \text{ and } i<j \end{cases}
\end{equation}
A \textbf{word} is any sequence of letters:
$$
\left[ i_1^{(d_1)} \dots i_n^{(d_n)} \right] \qquad \forall i_1,\dots,i_n \in I, d_1, \dots, d_n \in \BZ
$$
We have the total lexicographic order on words given by:
$$
\left[ i_1^{(d_1)} \dots i_n^{(d_n)} \right] < \left[ j_1^{(e_1)} \dots j_{m}^{(e_{m})} \right]
$$
if $i_1^{(d_1)} = j_1^{(e_1)}$, \dots, $i_k^{(d_k)} = j_k^{(e_k)}$ and either $i_{k+1}^{(d_{k+1})} < j_{k+1}^{(e_{k+1})}$ or $k = n < m$. \\

\end{definition}

\noindent The \textbf{degree} of a word $v = \left[ i_1^{(d_1)} \dots i_n^{(d_n)} \right]$ is defined as:
$$
\deg v = (\bs_{i_1} + \dots + \bs_{i_n}, d_1+\dots+d_n) \in \nn \times \BZ
$$
its \textbf{sequence of exponents} is defined as
$$
\overline{v} = (d_1,\dots,d_n)
$$
and the \textbf{length} of the word $v$ as above will be the number $n$. \\

\subsection{}

For any word $w = \left[ i_1^{(d_1)} \dots i_n^{(d_n)} \right]$, we will write:
\begin{align}
&e_w = e_{i_1,d_1} * \dots * e_{i_n,d_n} \in \oCS \label{eqn:def ew} \\
&f_w = f_{i_1,-d_1} * \dots * f_{i_n,-d_n} \in \oCS^\op \label{eqn:def fw}
\end{align}
The following is an easy exercise, which we leave to the interested reader: \\

\begin{proposition}
\label{prop:iso}

The assignment $e_w \mapsto f_w$ gives an isomorphism:
$$
\oCS \rightarrow \oCS^{\emph{op}} \Big|_{t_e \mapsto \frac q{t_e} \ \forall e\in E}
$$
(it is easy to see how to extend the map above to the whole of $\CS$, cf. Theorem \ref{thm:main}). \\


\end{proposition}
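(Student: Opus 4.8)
The plan is to realize the claimed isomorphism as the restriction of the ``variable inversion'' automorphism $R(\dots,z_{ia},\dots)\mapsto R(\dots,z_{ia}^{-1},\dots)$. I would proceed in three short steps: record an identity satisfied by the functions $\zeta_{ij}$, write $e_w$ and $f_w$ as symmetrizations of explicit rational functions, and observe that the two families are interchanged by the inversion combined with the parameter twist.

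The computation that makes everything go is the elementary identity
\[
\zeta_{ij}(x)\Big|_{t_e\mapsto q/t_e}=\zeta_{ji}(x)\qquad(\forall\, i,j\in I),
\]
which is immediate from \eqref{eqn:def zeta}: the substitution $t_e\mapsto q/t_e$ carries the factor $1-t_ex$ attached to an edge $e=\oij$ to $1-qx/t_e$ and the factor $1-qx/t_e$ attached to an edge $e=\oji$ to $1-t_ex$, while the loop factor $\big(\tfrac{1-x/q}{1-x}\big)^{\delta^i_j}$ is unchanged, so the total contribution of every unoriented edge turns $\zeta_{ij}$ into $\zeta_{ji}$. In other words, the twist $t_e\mapsto q/t_e$ interchanges $\zeta_{ij}$ and $\zeta_{ji}$.

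Next I would spell out both sides. Iterating the shuffle product \eqref{eqn:shuf prod} on one-variable elements gives, up to the combinatorial normalizations appearing there,
\[
e_w=e_{i_1,d_1}*\dots*e_{i_n,d_n}=\Sym\left[z_1^{d_1}\cdots z_n^{d_n}\prod_{1\le a<b\le n}\zeta_{i_ai_b}\!\left(\tfrac{z_a}{z_b}\right)\right],
\]
where $z_a$ is the variable attached to the vertex $i_a$ and $\Sym$ symmetrizes the variables attached to a common vertex; running the same computation in the opposite algebra $\CS^{\op}$ (where the factors are multiplied in the reverse order) yields
\[
f_w=f_{i_1,-d_1}*\dots*f_{i_n,-d_n}=\Sym\left[z_1^{-d_1}\cdots z_n^{-d_n}\prod_{1\le a<b\le n}\zeta_{i_bi_a}\!\left(\tfrac{z_b}{z_a}\right)\right].
\]
Computing $f_w$ inside $\CS^{\op}\big|_{t_e\mapsto q/t_e}$ turns each $\zeta_{i_bi_a}$ into $\zeta_{i_ai_b}$ by the identity above, and the result is obtained from the formula for $e_w$ by the substitution $z_a\mapsto z_a^{-1}$: this sends $z_a^{-d_a}$ to $z_a^{d_a}$ and $\zeta_{i_ai_b}(z_b/z_a)$ to $\zeta_{i_ai_b}(z_a/z_b)$, and commutes with $\Sym$.

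To conclude, observe that $\Xi\colon R(\dots,z_{ia},\dots)\mapsto R(\dots,z_{ia}^{-1},\dots)$ intertwines the shuffle product \eqref{eqn:shuf prod} on $\CS$ with the product on $\CS^{\op}\big|_{t_e\mapsto q/t_e}$: it replaces each factor $\zeta_{i_ai_b}(z_a/z_b)$ in \eqref{eqn:shuf prod} by $\zeta_{i_ai_b}(z_b/z_a)$, and, since the opposite multiplication on $\CS^{\op}$ is governed by $\zeta_{ji}(x^{-1})$ and the twist sends this to $\zeta_{ij}(x^{-1})$, this is precisely the structure function of $\CS^{\op}\big|_{t_e\mapsto q/t_e}$. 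Hence $\Xi$ is an $\BF$-algebra isomorphism from $\CS$ onto $\CS^{\op}\big|_{t_e\mapsto q/t_e}$ (visibly an involution, so bijective), it sends $e_{i,d}\mapsto f_{i,-d}$ and therefore $e_w\mapsto f_w$, and restricting it to the subalgebras generated by the degree-$\bvs_i$ pieces yields the asserted isomorphism $\oCS\to\oCS^{\op}\big|_{t_e\mapsto q/t_e}$; the same $\Xi$ extends the statement to all of $\CS$, provided one additionally checks (a short verification) that it converts the wheel conditions \eqref{eqn:wheel} into the ones cutting out $\CS^{\op}\big|_{t_e\mapsto q/t_e}$. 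The only mildly delicate point, in my view, is the second step: correctly tracking which $\zeta$-factors, with which arguments and in which order, are produced by the iterated product in the opposite algebra, and confirming that the order reversal built into the opposite multiplication is undone exactly by the combination of the twist $t_e\mapsto q/t_e$ and the inversion $z\mapsto z^{-1}$; everything else is formal.
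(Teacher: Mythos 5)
Your proof is correct, and since the paper leaves Proposition \ref{prop:iso} to the reader, it serves as a valid filling-in of the omitted argument. The strategy of realizing the isomorphism as the variable-inversion map $\Xi\colon R(\dots,z_{ia},\dots)\mapsto R(\dots,z_{ia}^{-1},\dots)$, combined with the elementary identity $\zeta_{ij}(x)\big|_{t_e\mapsto q/t_e}=\zeta_{ji}(x)$, is exactly what one expects given the structure of \eqref{eqn:def zeta}. Your iterated-product formulas for $e_w$ and $f_w$ are correct (the $1/n_i!$ normalizations in \eqref{eqn:shuf prod} cancel when iterating over single-variable factors, yielding precisely the $\Sym$ expressions you display, consistent with what the paper writes below \eqref{eqn:pair e and f}), and your bookkeeping of how the order reversal in $\CS^{\op}$ produces the factor $\zeta_{i_bi_a}(z_b/z_a)$ for $a<b$ is accurate. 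One small point worth making explicit: bijectivity of the restricted map $\oCS\to\oCS^{\op}\big|_{t_e\mapsto q/t_e}$ does not follow solely from $\Xi$ being an involution on $\CV$; you also need that $\Xi$ carries the generating sets onto each other (which you do note, since $\Xi(e_{i,d})=f_{i,-d}$ identifies the degree-$\bvs_i$ pieces), whence the image of the algebra generated by one set is the algebra generated by the other. With that observation, the argument is complete, and your deferral of the wheel-condition check to the parenthetical extension to all of $\CS$ is appropriate, since the Proposition as stated only concerns the spherical subalgebras.
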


\noindent By definition, elements of $\oCS$ and $\oCS^\op$ are linear combinations of $e_w$'s and $f_w$'s, respectively. One would like to extract a subset of the $e_w$'s and $f_w$'s which determines a basis. To this end, we introduce the following notion. \\

\begin{definition}
\label{def:proto}

A word $v = \left[ i_1^{(d_1)} \dots i_n^{(d_n)} \right]$ is called \textbf{non-increasing} if:
\begin{equation}
\label{eqn:proto}
 i_1^{(d_1)} \geq \dots \geq i_n^{(d_n)} \quad \Leftrightarrow \quad \Big( d_k < d_{k+1} \Big) \text{ or } \Big( d_k = d_{k+1} \text{ and } i_k \geq i_{k+1} \Big), \ \forall k
\end{equation}

\end{definition}

\medskip

\noindent Our first motivation for the Definition above is the following: \\

\begin{proposition}
\label{prop:proto}

For any word $v$, there exist coefficients $\in \BF$ such that:
\begin{align}
&e_v = \sum_{\text{non-increasing words } w \geq v} \emph{coeff} \cdot e_w \label{eqn:linear combination e} \\
&f_v = \sum_{\text{non-increasing words } w \geq v} \emph{coeff} \cdot f_w \label{eqn:linear combination f}
\end{align}
Thus, $\{e_w\}_{w\text{ non-increasing}}$ and $\{f_w\}_{w\text{ non-increasing}}$ span $\oCS$ and $\oCS^{\emph{op}}$, respectively. \\

\end{proposition}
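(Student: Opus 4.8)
The plan is to run the standard straightening/reordering argument for shuffle algebras, inducting on the lexicographic order on words. Both identities \eqref{eqn:linear combination e} and \eqref{eqn:linear combination f} have the same shape, and by Proposition \ref{prop:iso} it suffices to treat one of them, say \eqref{eqn:linear combination e}; I will argue directly with the $e$'s.

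First I would fix the degree $(\bn,d)\in\nn\times\BZ$ and note that there are only finitely many words $v$ of that degree, since the letters $i_k^{(d_k)}$ range over a finite set once $\bn$ and the total homological degree $d$ are pinned down (the length $n$ is determined by $\bn$, and the $d_k$ that appear in a word contributing to a fixed homogeneous component of $\oCS\subset\CS$ are bounded — any word whose entries are too spread out produces an $e_w$ of homological degree outside the support, so only finitely many matter in each internal-degree slice; one restricts attention to this finite set). Within this finite set, perform downward induction on the lexicographic order: the top word in each degree is automatically non-increasing (the maximal word puts the largest letters first, hence is $\geq$ everything and in particular weakly decreasing), so the base case is trivial. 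For the inductive step, take a word $v$ that is \emph{not} non-increasing, so there is an index $k$ with $i_k^{(d_k)} < i_{k+1}^{(d_{k+1})}$; swap the $k$-th and $(k{+}1)$-st factors in the shuffle product $e_{i_1,d_1}*\dots*e_{i_n,d_n}$. The key computation is that in the shuffle algebra $\CS$ one has, for single-variable elements,
$$
e_{i,c}*e_{j,c'} = e_{j,c'}*e_{i,c}\cdot(\text{ratio of }\zeta\text{'s}) + (\text{lower-order corrections}),
$$
more precisely $z_1^c z_2^{c'}\zeta_{ji}(z_2/z_1)$ symmetrized versus $z_1^{c'}z_2^{c}\zeta_{ij}(z_1/z_2)$ symmetrized differ by terms that, after expanding the Laurent polynomial $\zeta$, are $\BF$-linear combinations of products $e_{i,c'}*e_{j,c''}$ and $e_{j,c'''}*e_{i,c''''}$ with the letters reshuffled so that the resulting words are \emph{strictly lexicographically larger} than $v$. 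Granting this, $e_v$ is rewritten as an $\BF$-linear combination of $e_w$ with $w>v$, and each such $w$ is handled by the inductive hypothesis, yielding \eqref{eqn:linear combination e}. The last sentence of the proposition — that the non-increasing $e_w$ span $\oCS$ (resp.\ $f_w$ span $\oCS^{\op}$) — is then immediate, since $\oCS$ is by definition spanned by \emph{all} $e_v$, and each of those is now expressed via non-increasing ones.

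The step I expect to be the main obstacle — or at least the one needing care — is the commutation computation and the bookkeeping that the correction terms really are \emph{strictly larger} in lexicographic order, so that the induction is well-founded. Concretely, one must check: (i) that swapping two adjacent letters with $i_k^{(d_k)}<i_{k+1}^{(d_{k+1})}$ produces a word (namely $[\dots i_{k+1}^{(d_{k+1})} i_k^{(d_k)}\dots]$) that is strictly $>v$, which is exactly the definition of the order on words since the first $k-1$ letters agree and position $k$ strictly increases; and (ii) that the extra terms coming from the non-constant part of $\zeta_{ij}$ — which change the exponents $d_k,d_{k+1}$ to $d_k\pm 1,\dots$ while keeping the vertices — also yield words $>v$, i.e.\ raising an earlier exponent or shifting mass toward the front strictly increases the word. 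Point (ii) uses that $\zeta_{ij}(x)=\zeta_{ij}(0)+x\cdot(\dots)$ with $\zeta_{ij}(0)$ a nonzero scalar (visible from \eqref{eqn:def zeta}: the constant term is $1$), so the leading term of the swap reproduces $e_{i_{k+1},d_{k+1}}*e_{i_k,d_k}$ up to a nonzero scalar and all other terms have strictly shifted exponents. Once these two monotonicity facts are nailed down, the finiteness observation of the first paragraph guarantees the induction terminates, and the proof is complete. I would not belabor the explicit $\zeta$-expansion in the writeup, as it is the routine part; the conceptual content is entirely in the order-theoretic claims (i) and (ii).
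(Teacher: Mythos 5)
The overall strategy (reorder via the quadratic relations, go up the lexicographic order) is correct in spirit and matches the paper's, and your claims (i) and (ii) about the direction of the moves are fine. But the argument has a real gap exactly at the step you flagged as the "main obstacle": the finiteness assertion in your first paragraph is false. For a fixed degree $(\bn,d)\in\nn\times\BZ$, there are infinitely many words $v=[i_1^{(d_1)}\dots i_n^{(d_n)}]$ — the vertices $i_k$ are constrained by $\bn$, but the exponents $d_k$ are only constrained by $d_1+\dots+d_n=d$, so they may be arbitrarily spread out, and each such word gives a genuine nonzero $e_w$ in the same graded piece of $\CS$. There is no "degree outside the support" phenomenon: the graded pieces of $\CV$ and $\CS$ are infinite-dimensional over $\BF$. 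Consequently, downward induction on a supposedly finite lexicographically ordered set is not available, and the question of whether your straightening process terminates (and produces a genuinely finite linear combination, as the proposition asserts) is left entirely open. Indeed this is nontrivial: the lexicographic order on words of a fixed degree has no top element, and each reordering move both pushes you up the order and can push exponents further out.

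What the paper actually proves is a strictly stronger statement, \eqref{eqn:proto strong}: it establishes, by induction on the length $n$, that $e_{i_1,d_1}*\dots*e_{i_n,d_n}$ is a finite linear combination of $e_w$ with $w$ non-increasing, $w\geq v$, \emph{and} with all exponents of $w$ confined to the interval $[\min(d_k)-\beta(n),\,\max(d_k)+\beta(n)]$ for a universal constant $\beta(n)$. The exponent bound is the entire point — it is what makes the sum finite — and proving it requires the elaborate "pre-move / post-move game" with the two-part analysis (Part I while $x_1\geq x_n$, where the exponents stay in $[\min d_k,\max d_k]$; Part II while $x_1<x_n$, where the exponents can leak out but only by a bounded amount controlled by $\beta(n-1)$). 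This bookkeeping is not routine polish; it is the substance of the proof. To fix your argument you would need to supply some replacement for this bound, e.g., by proving the $n=2$ case with an explicit bound (which is easy, $\beta(2)=m+1$) and then setting up the same kind of bounded-move induction; without it, "repeat the process" does not visibly terminate.

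One smaller point: you also silently assume the quadratic relation takes the polynomial form "$e_{i,c}*e_{j,c'}=\zeta_{ji}(0)\,e_{j,c'}*e_{i,c}+\text{corrections}$." When $i=j$ the function $\zeta_{ii}$ has a pole at $x=1$, so one must first clear the denominator (as in the passage leading to \eqref{eqn:quadratic 4}); this introduces the sign and the index shift visible in \eqref{eqn:quadratic 4} and slightly changes the range of allowed corrections. This is a fixable detail, unlike the finiteness issue above.
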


\begin{proof}  We will prove the required statement for the $e$'s, as the statement for the $f$'s follows from Proposition \ref{prop:iso}. Let us consider the formal series: 
$$
e_i(z) = \sum_{d \in \BZ} \frac {e_{i,d}}{z^d}
$$
For any $i,j \in I$, the following relation holds in $\CS_{\bs_i+\bs_j}$-valued formal series in $z$ and $w$, as can easily be seen from the shuffle product formula \eqref{eqn:shuf prod}:
\begin{equation}
\label{eqn:quadratic}
e_i(z) * e_j(w) \zeta_{ji} \left(\frac wz \right) = e_j(w) * e_i(z)  \zeta_{ij} \left(\frac zw \right)
\end{equation}
The meaning of the formula above is that one clears all the denominators of the $\zeta$ functions (which arise if and only if $i = j$) and then identifies the coefficients of the left and right-hand sides in $z$ and $w$. Explicitly, if $i \neq j$ then \eqref{eqn:quadratic} reads:
\begin{multline*}
e_i(z) * e_j(w) \prod_{e = \oji \in E} \left(1 - \frac {t_ew}z \right) \prod_{e = \oij \in E} \left(1 - \frac {qw}{t_e z} \right) = \\ = e_j(w)  * e_i(z)\prod_{e = \oij \in E} \left(1 - \frac {t_ez}w \right) \prod_{e = \oji \in E} \left(1 - \frac {qz}{t_e w} \right)
\end{multline*}
By equating the coefficients of $z^{-a}w^{-b}$ (for any $a,b\in \BZ$) in the formula above, we obtain the following relations in $\CS_{\bs_i+\bs_j}$ (``coeff" denotes various elements of $\BF$):
\begin{equation}
\label{eqn:quadratic 2}
e_{i,a} * e_{j,b} + \sum_{\bullet = 1}^m \text{coeff} \cdot e_{i,a-\bullet} * e_{j,b+\bullet} = e_{j,b} * e_{i,a} + \sum_{\bullet = 1}^m \text{coeff} \cdot e_{j,b-\bullet} * e_{i,a+\bullet}
\end{equation}
where $m = 2|E|$. When $i = j$, formula \eqref{eqn:quadratic} reads:
\begin{multline*}
e_i(z) * e_i(w) (z - wq^{-1}) \prod_{e = \oii \in E} \left[ \left(1 - \frac {t_ew}z \right) \left(1 - \frac {qw}{t_e z} \right) \right] \\ = e_i(w) * e_i(z) (z q^{-1} - w) \prod_{e = \oii \in E} \left[ \left(1 - \frac {t_ez}w \right)\left(1 - \frac {qz}{t_e w} \right) \right]
\end{multline*}
As above, by equating the coefficients of $z^{1-a}w^{-b}$ (for any $a,b\in \BZ$) we obtain:
\begin{multline}
\label{eqn:quadratic 4}
e_{i,a}  * e_{i,b} + \sum_{\bullet=1}^{m+1} \text{coeff} \cdot e_{i,a-\bullet} * e_{i,b+\bullet} = \\ = - e_{i,b+1} * e_{i,a-1}  + \sum_{\bullet=1}^{m+1} \text{coeff} \cdot e_{i,b+1-\bullet} * e_{i,a-1 + \bullet} 
\end{multline}
We will use the formulas above to prove the following stronger version of \eqref{eqn:linear combination e}, by induction over $n$. There exists a number $\beta(n)$, which only depends on $n \in \BN$, such that for all $i_1,\dots,i_n \in I$ and $d_1,\dots,d_n \in \BZ$ we have: 
\begin{equation}
\label{eqn:proto strong}
e_{i_1,d_1} * \dots * e_{i_n,d_n} = 
\end{equation}
$$
= \mathop{\sum_{\text{non-increasing words } \left[ j_1^{(r_1)} \dots j_n^{(r_n)} \right] \geq \left[ i_1^{(d_1)} \dots i_n^{(d_n)} \right]}}_{\text{such that } \min(d_k) - \beta(n) \leq \min(r_k) \leq \max(r_k) \leq \max(d_k) + \beta(n)} \text{coeff } \cdot e_{j_1,r_1} * \dots * e_{j_n,r_n}
$$
The case $n=1$ is trivial, so let us start with the case $n=2$. If $i_1^{(d_1)} \geq i_2^{(d_2)}$ there is nothing to prove, while if $i_1^{(d_1)} < i_2^{(d_2)}$, we can use \eqref{eqn:quadratic 2} or \eqref{eqn:quadratic 4} to replace:
\begin{equation}
\label{eqn:products}
e_{i_1,d_1} * e_{i_2,d_2} \quad \text{by} \quad e_{i_1,d_1-x} * e_{i_2,d_2+x} \quad \text{and} \quad e_{i_2,d_2-y} * e_{i_1,d_1+y}
\end{equation}
for various $1 \leq x \leq m+1$ and $\delta_{i_2}^{i_1}(-1+\delta_{d_2+1}^{d_1}) \leq y \leq m$. The latter products in \eqref{eqn:products} are already non-increasing, and we may simply repeat the process for the middle products in \eqref{eqn:products} if $d_1-x>d_2+x$ or if $d_1-x=d_2+x$ and $i_1<i_2$. It is easy to see that we thus obtain the $n=2$ case of formula \eqref{eqn:proto strong}, with $\beta(2) = m+1$. \\

\noindent Now let us take any $n \geq 3$, and assume that \eqref{eqn:proto strong} holds for $1,2,\dots,n-1$. We will play the following game with the product of $e$'s in the left-hand side of \eqref{eqn:proto strong}: \\

\begin{itemize}

\item perform a pre-move, i.e. apply \eqref{eqn:proto strong} to $e_{i_1,d_1} * \dots * e_{i_{n-1},d_{n-1}}$, \\

\item in every summand of the resulting expression, perform a post-move, i.e. apply \eqref{eqn:proto strong} to $e_{j_2,r_2} * \dots * e_{j_n,r_n}$, \\

\item in every summand of the resulting expression, go back to the first bullet and perform a pre-move, and so on. \\

\end{itemize}

\noindent This game branches out like a tree, since at every step we choose a summand from a finite linear combination of products of $e$'s, and apply either a pre-move or a post-move. In every branch, we stop the game whenever we encounter an expression:
\begin{equation}
\label{eqn:mon f}
e_{k_1,x_1} * \dots * e_{k_n,x_n}
\end{equation}
with $w = [k_1^{(x_1)}\dots k_n^{(x_n)}]$ a non-increasing word, as any further pre-moves or post-moves would be trivial. But if $w$ as above is not non-increasing, then any pre-move will make $k_1^{(x_1)}$ strictly increase, while any post-move will make $k_n^{(x_n)}$ strictly decrease (this is an easy consequence of relations \eqref{eqn:quadratic 2} and \eqref{eqn:quadratic 4}). Since there are finitely many choices for $k_1$ and $k_n$ as elements of $I$, this means that after finitely many moves the exponent $x_1$ must strictly decrease and the exponent $x_n$ must strictly increase. Thus, we can divide every branch of the game into two parts: \\

\noindent \emph{Part I: while $x_1 \geq x_n$}. By the preceding two sentences, this part of the game can last at most $(d_1 - d_n) \cdot |I|$ moves. We want to show that any exponents $x_1,\dots,x_n$ encountered in \eqref{eqn:mon f} during this part are sandwiched between $\min(d_1,\dots,d_n)$ and $\max(d_1,\dots,d_n)$. Indeed, let's see that this property is preserved under a pre-move (the case of a post-move is analogous, and we leave it to the interested reader). Since a pre-move\footnote{Except for the very first pre-move in the game, but that one can only increase the exponents by a bounded amount, so it does not affect our overall argument} comes after a post-move, then just before the pre-move we have:
$$
x_2 \leq \dots \leq x_n
$$
By assumption, the maximum of the $x_k$'s (namely $x_1$) and the minimum of the $x_k$'s (namely $x_2$) are contained between $\min(d_1,\dots,d_n)$ and $\max(d_1,\dots,d_n)$. After the pre-move, the exponents will change according to:
$$
(x_1,x_2,\dots,x_{n-1}) \mapsto (x_1' \leq x_2' \leq \dots \leq x_{n-1}')
$$
such that $x_1 + \dots + x_{n-1} = x_1' + \dots + x_{n-1}'$, because moves preserve the sum of the exponents involved (see \eqref{eqn:quadratic 2}, \eqref{eqn:quadratic 4}). If we are still in Part I after the pre-move, this means that $x_1' \geq x_n$, so all the numbers $x_1',\dots,x_{n-1}'$ are still $\geq \min(d_1,\dots,d_n)$. Before the pre-move, only the number $x_1$ was greater than $x_n$, while after the pre-move, all the numbers $x_1',\dots,x_{n-1}'$ are greater than $x_n$. This is only possible if the numbers $x_1',\dots,x_{n-1}'$ are no greater than $x_1$, so they will still be $\leq \max(d_1,\dots,d_n)$. \\

\noindent \emph{Part II: while $x_1 < x_n$}. In this case, the values of $x_1,\dots,x_{n}$ can become greater then the maximum (respectively lower than the minimum) of $d_1,\dots,d_n$ as we perform the two kinds of moves. However, by the induction hypothesis, at each step the values of $x_1,\dots,x_{n}$ can only become larger (respectively smaller) by $\beta(n-1)$ than the maximum (respectively minimum) of the analogous values at the previous step in the game. Let us perform a bounded number of moves, until we have:
$$
x_n - x_1 > 2n \cdot \max(\beta(1),\beta(2),\dots,\beta(n-1))
$$ 
and so the values of all the exponents $x_1,\dots,x_n$ can only become greater than the maximum (respectively lower than the minimum) of $d_1,\dots,d_n$ by a fixed amount. Let's assume a summand \eqref{eqn:mon f} was obtained after a pre-move, which means that:
$$
k_1^{(x_1)} \geq \dots \geq k_{n-1}^{(x_{n-1})}
$$
and in particular entails the inequalities $x_1 \leq \dots \leq x_{n-1}$. By our assumption on the size of the difference $x_n - x_1$, there exists a number $s \in \{1,\dots,n-1\}$ such that $\min(x_n,x_{s+1}) - x_s > 2\max(\beta(1),\beta(2),\dots,\beta(n-1))$. If $s = n-1$, then the monomial \eqref{eqn:mon f} already corresponds to a non-increasing word, and we are done. If $s < n-1$, then we simply apply the induction hypothesis of \eqref{eqn:proto strong} to:
$$
e_{k_1,x_1}  * \dots * e_{k_s,x_s} \qquad \text{and} \qquad e_{k_{s+1},x_{s+1}} * \dots * e_{k_n,x_n}
$$
and we conclude that the expressions above are equal to linear combinations of non-increasing words:
$$
e_{l_1,y_1} * \dots * e_{l_s,y_s} \qquad \text{and} \qquad e_{l_{s+1},y_{s+1}} * \dots * e_{l_n,y_n}
$$
respectively, where: 
$$
y_s \leq x_s + \beta(s) \qquad \text{and} \qquad y_{s+1} \geq \min(x_n,x_{s+1}) - \beta(n-s)
$$
These inequalities force $y_{s+1} > y_s$, which implies that the word $[l_1^{(y_1)} \dots l_n^{(y_n)}]$ is non-increasing, and we are done. A similar analysis applies to the situation that \eqref{eqn:mon f} was obtained after a post-move, and we leave the details to the interested reader. At the end of the game, the values of the exponents $y_1,\dots,y_n$ can only become greater than the maximum (respectively lower than the minimum) of $d_1,\dots,d_n$ by a bounded amount, so the proof of the induction step is complete.

\end{proof}

\subsection{} Another reason for considering non-increasing words is the following: \\

\begin{lemma}
\label{lem:finite}

There are finitely many non-increasing words of given degree, which are bounded above by any given word $v$. \\

\end{lemma}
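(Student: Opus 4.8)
The plan is to turn the two hypotheses on a candidate word $w$ — that it is non-increasing and that it is $\leq v$ in the lexicographic order — into a single uniform bound: every exponent occurring in $w$ must lie in a fixed finite interval of integers depending only on $v$ and the target degree. Since the length of $w$ and, for each $i\in I$, the number of its letters of type $i$ are both dictated by the degree $(\bn,d)$, such a bound immediately forces the set of admissible $w$ to be finite.

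Concretely, I would write $v = \left[ j_1^{(e_1)}\dots j_m^{(e_m)} \right]$ (the case $m=0$ being trivial) and take an arbitrary non-increasing word $w = \left[ i_1^{(d_1)}\dots i_n^{(d_n)} \right]$ with $\deg w = (\bn,d)$ and $w\leq v$. The first step is the elementary observation that $w\leq v$ forces $i_1^{(d_1)}\leq j_1^{(e_1)}$: this is clear if $w=v$, if $w$ and $v$ first disagree at a position where $w$'s letter is the smaller one, or if $w$ is a proper prefix of $v$. By the definition \eqref{eqn:lex affine} of the order on letters, $i_1^{(d_1)}\leq j_1^{(e_1)}$ implies $d_1\geq e_1$. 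The second step uses that $w$ is non-increasing: by \eqref{eqn:proto} this means $d_1\leq d_2\leq\dots\leq d_n$, so in fact $d_k\geq e_1$ for all $k$. The third step uses the degree constraint $d_1+\dots+d_n=d$: combining it with $d_l\geq e_1$ for all $l\neq k$ gives $d_k\leq d-(n-1)e_1$ for every $k$. Hence every exponent of $w$ lies in $[e_1,\,d-(n-1)e_1]\cap\BZ$, a finite set.

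To conclude, I would observe that the degree $(\bn,d)$ fixes $n=\sum_{i\in I}n_i$ and the multiset of vertices appearing in $w$; therefore there are finitely many possibilities for the weakly increasing exponent tuple $(d_1,\dots,d_n)$ (each entry confined to the finite interval above), and for each such tuple only finitely many words with the prescribed vertices. This proves the lemma.

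I do not expect a genuine obstacle here — the content is entirely bookkeeping. The only place to be slightly careful is the first step: one must check that $w\leq v$ really does yield $d_1\geq e_1$ in every case of the lexicographic comparison (equal first letters, strictly smaller first letter, or $w$ a proper prefix of $v$), but each case is immediate from the definitions.
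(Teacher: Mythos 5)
Your proof is correct and is essentially the same argument the paper gives: boundedness above forces a lower bound on $d_1$, the non-increasing condition \eqref{eqn:proto} propagates that lower bound to all $d_k$, and the fixed sum then confines all exponents to a finite interval. You have merely spelled out the first step (that $w\leq v$ forces $d_1\geq e_1$ in each case of the lexicographic comparison) which the paper leaves implicit.
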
 

\begin{proof} Let us assume we are counting non-increasing words $[i_1^{(d_1)} \dots i_n^{(d_n)}]$ with $d_1+\dots+d_n = d$ for fixed $n$ and $d$. The fact that such words are bounded above implies that $d_1$ is bounded below. But then the inequality \eqref{eqn:proto} implies that $d_2,\dots,d_n$ are also bounded below. The fact that $d_1+\dots+d_n$ is fixed implies that there can only be finitely many choices for the exponents $d_1,\dots,d_n$. Since there are also finitely many choices for $i_1,\dots,i_n \in I$, this concludes the proof.  

\end{proof}

\noindent The following notion is inspired by the construction of \cite{LR,L,R} in the case of Lie algebras and finite type quantum groups, and \cite{NT} in the case of quantum loop groups. \\

\begin{definition}
\label{def:standard}

A word $v$ is called \textbf{standard} if $e_v$ cannot be written as a linear combination of $e_w$ for various $w > v$. \\
\end{definition}

\noindent As a consequence of \eqref{eqn:linear combination e}, we see that any standard word is non-increasing, and that we would get the same notion of standard words if we inserted the word ``non-increasing" after the word ``various" in Definition \ref{def:standard}. It would be very interesting to develop a combinatorial description of standard words (see \cite{NT} for the case when $Q$ is a Dynkin diagram of finite type). \\

\subsection{} We will now compute how the elements $e_v$ and $f_w$ pair with each other under \eqref{eqn:pairing}, for various words $v$ and $w$ of the same degree. We will write:
\begin{equation}
\label{eqn:form}
v = \left[ i_1^{(d_1)} \dots i_n^{(d_n)} \right] \quad \text{and} \quad w = \left[ j_1^{(k_1)} \dots j_n^{(k_n)} \right] 
\end{equation}
By formula \eqref{eqn:pairing formula}, $\langle e_v, f_w \rangle$ equals:
$$
\int_{|z_1| \ll \dots \ll |z_n|} \frac {z_1^{-k_1}\dots z_n^{-k_n}}{\prod_{1\leq a < b \leq n} \zeta_{j_a j_b} \left( \frac {z_a}{z_b} \right)} \cdot \Sym \left[ x_1^{d_1} \dots x_n^{d_n} \prod_{1 \leq a < b\leq n} \zeta_{i_a i_b} \left(\frac {x_a}{x_b} \right) \right] \prod_{a = 1}^n Dz_a
$$
A little explanation is in order to make sense of the expression above. To write $e_v$ as $\text{Sym}[\dots]$ in this expression, we are implicitly plugging the variable $x_a$ instead of one of the variables $z_{i_a\bullet}$ of the Sym, for all $a \in \{1,\dots,n\}$. However, to apply formula \eqref{eqn:pairing formula}, the variable $z_a$ must be identified with one of the variables $z_{j_a \bullet}$ of the Sym. Therefore, we are compelled to identify $x_a = z_{\sigma(a)}$ for some permutation $\sigma \in S(n)$ which satisfies $i_a = j_{\sigma(a)}$ for all $a \in \{1,\dots,n\}$. Put differently, the symbol Sym in the expression above must be interpreted as summing only over those permutations $\sigma \in S(n)$ such that $i_a = j_{\sigma(a)}$ for all $a \in \{1,\dots,n\}$. We conclude that:
\begin{multline}
\label{eqn:pair e and f}
\Big \langle e_v, f_w \Big \rangle = \int_{|z_1| \ll \dots \ll |z_n|} \frac {z_1^{-k_1}\dots z_n^{-k_n}}{\prod_{1\leq a < b \leq n} \zeta_{j_a j_b} \left( \frac {z_a}{z_b} \right)} \\ \mathop{\sum_{\sigma \in S(n)}}_{i_a = j_{\sigma(a)} \ \forall a} \left[ z_{\sigma(1)}^{d_1} \dots z_{\sigma(n)}^{d_n} \prod_{1 \leq a < b\leq n} \zeta_{i_a i_b} \left(\frac {z_{\sigma(a)}}{z_{\sigma(b)}} \right) \right] \prod_{a = 1}^n Dz_a =
\end{multline}
$$
= \int_{|z_1| \ll \dots \ll |z_n|} \mathop{\sum_{\sigma \in S(n)}}_{i_a = j_{\sigma(a)} \ \forall a} z_1^{d_{\sigma^{-1}(1)}-k_1}\dots z_n^{d_{\sigma^{-1}(n)}-k_n}  \prod^{a<b}_{\sigma^{-1}(a) > \sigma^{-1}(b)} \frac {\zeta_{j_b j_a} \left(\frac {z_{b}}{z_{a}} \right)}{\zeta_{j_a j_b} \left(\frac {z_{a}}{z_{b}} \right)} \prod_{a = 1}^n Dz_a
$$
Let $\#_{\oij}$ denote the number of arrows from $i$ to $j$, and:
\begin{equation}
\label{eqn:def sharp}
\#_{ij} = \#_{\oij} + \#_{\oji}
\end{equation}
Thus, $\#_{ij}$ counts the total number of edges between $i\neq j$, and twice the number of loops at $i$ if $i=j$. Because of the easily seen fact that:
\begin{equation}
\label{eqn:ratio zero}
\frac {\zeta_{ij}(x^{-1})}{\zeta_{ji}(x)} \in x^{-\#_{ij}} \cdot \BF[[x]]
\end{equation}
formula \eqref{eqn:pair e and f} implies that:
\begin{equation}
\label{eqn:non-zero}
\Big \langle e_v, f_w \Big \rangle \neq 0 \qquad \Rightarrow  
\end{equation}
$$
(k_1,\dots,k_n) = (d_{\sigma^{-1}(1)}, \dots, d_{\sigma^{-1}(n)}) + \sum^{a < b}_{\sigma^{-1}(a) > \sigma^{-1}(b)} c_{a,b} \cdot \underbrace{(0,\dots,1,\dots,-1,\dots,0)}_{1\text{ on position }a, -1 \text{ on position }b}
$$
for some $\sigma \in S(n)$ such that $i_a = j_{\sigma(a)}, \ \forall a$ and some $\{c_{a,b} \geq - \#_{j_aj_b}\}^{a < b}_{\sigma^{-1}(a) > \sigma^{-1}(b)}$. \\

\begin{remark}
\label{rem:symmetry}

Let us prove an ``almost" symmetry property for the pairing. If we change the variables to $y_a = z_{\sigma(a)}$ in \eqref{eqn:pair e and f}, we may conclude that $\langle e_v, f_w\rangle$ equals:
$$
\int_{|y_{\sigma^{-1}(1)}| \ll \dots \ll |y_{\sigma^{-1}(n)}|} \mathop{\sum_{\sigma \in S(n)}}_{i_a = j_{\sigma(a)} \ \forall a} y_1^{d_1-k_{\sigma(1)}} \dots y_n^{d_n-k_{\sigma(n)}}  \prod^{a<b}_{\sigma(a) > \sigma(b)} \frac {\zeta_{i_a i_b} \left(\frac {y_{a}}{y_{b}} \right)}{\zeta_{i_b i_a} \left(\frac {y_{b}}{y_{a}} \right)} \prod_{a = 1}^n Dy_a
$$
The contours of integration are such that $|y_a| \ll |y_b|$ if and only if $\sigma(a) < \sigma(b)$. This means that we can move the contours to ensure that $|y_1| \gg \dots \gg |y_n|$ without picking up any new poles, so we conclude:
\begin{equation}
\label{eqn:symmetric pairing}
\Big \langle e_{i_1,d_1} * \dots * e_{i_n,d_n}, R \Big \rangle = \int_{|y_1| \gg \dots \gg |y_n|} \frac {y_1^{d_1}\dots y_n^{d_n} R(y_1,\dots,y_n)}{\prod_{1\leq a < b \leq n} \zeta_{i_b i_a} \left( \frac {y_b}{y_a} \right)} \prod_{a = 1}^n Dy_a
\end{equation}
where $R = f_{j_1,-k_1} * \dots * f_{j_n,-k_n} \in \oCS^{\emph{op}}$ (in the formula above, the symbol $y_a$ is plugged into one of the variables $z_{i_a \bullet}$ of $R$, for all $a$). As soon as we prove Theorem \ref{thm:main}, we will obtain $\oCS^{\emph{op}} = \CS^{\emph{op}}$, so formula \eqref{eqn:symmetric pairing} will actually hold for all elements $R \in \CS^{\emph{op}}$. Comparing \eqref{eqn:pairing formula} with \eqref{eqn:symmetric pairing} reveals the ``almost" symmetry of the pairing: 
\begin{equation}
\label{eqn:pairing full}
\CS \otimes \CS^{\emph{op}} \xrightarrow{\langle \cdot,\cdot \rangle} \BF
\end{equation}

\end{remark}

\subsection{} 
\label{sub:graph}

Let $m = 2|E|$, and consider the infinite graph $G$ whose vertices are all the non-decreasing sequences of integers $(d_1 \leq \dots \leq d_n)$, and edges are:
\begin{equation}
\label{eqn:edge}
(d_1\leq \dots \leq d_n) \ \longrightarrow \ (d_1' \leq \dots \leq d_n') 
\end{equation}
if:
\begin{equation}
\label{eqn:edge 2}
d_a' =  d_{\sigma(a)} - \sum^{s < a}_{\sigma(s) > \sigma(a)} c_{s,a} + \sum^{a < t}_{\sigma(a) > \sigma(t)} c_{a,t}, \qquad \forall a \in \{1,\dots,n\}
\end{equation}
for some permutation $\text{Id} \neq \sigma \in S(n)$ and some collection of non-negative integers $\{c_{a,b} \geq -m\}^{a < b}_{\sigma(a) > \sigma(b)}$. While a priori a directed graph, $G$ can actually be considered to be undirected, because the existence of a left-to-right edge in \eqref{eqn:edge} also implies the existence of the corresponding right-to-left edge, with respect to:
$$
\sigma' = \sigma^{-1} \qquad \text{and} \qquad c'_{a,b} = c_{\sigma^{-1}(b),\sigma^{-1}(a)}
$$
$$$$

\begin{lemma}
\label{lem:combi}

All connected components of $G$ are finite. \\

\end{lemma}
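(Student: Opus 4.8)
The plan is to exhibit a function on the vertices of $G$ that is preserved along edges, together with a second function that is \emph{bounded} on each connected component; combining these two will force each component to be contained in a finite set. First I would observe that every edge \eqref{eqn:edge 2} preserves the sum $d_1 + \dots + d_n$: summing \eqref{eqn:edge 2} over $a$, the contributions $-c_{s,a}$ and $+c_{a,t}$ cancel in pairs (each $c_{s,t}$ with $s<t$, $\sigma(s)>\sigma(t)$ appears once with a plus sign, when $a = s$, and once with a minus sign, when $a = t$), and $\sum_a d_{\sigma(a)} = \sum_a d_a$. So $d_1 + \dots + d_n$ is a connected-component invariant; call it $D$. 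This already shows each component lies in the hyperplane $\{\sum d_a = D\}$.

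Next I would control the spread. The key point is that the integers $c_{a,b}$ are bounded \emph{below} by $-m$ but can be arbitrarily large, so a single edge can increase $\max_a d_a - \min_a d_a$; the task is to show this growth cannot continue indefinitely within a component. I expect this to be the main obstacle, and I would handle it exactly as in the analysis of Part I versus Part II in the proof of Proposition \ref{prop:proto}: an edge \eqref{eqn:edge 2} with nontrivial $\sigma$ moves mass from the larger-indexed slots to the smaller-indexed ones (because $c_{a,b}$ is subtracted from $d_a'$-type positions with $a$ on the right and added with $a$ on the left, relative to the inversions of $\sigma$), so as long as the tuple is ``not too spread out'' the new entries stay sandwiched between $\min d_a$ and $\max d_a$ — here one uses that $c_{a,b} \geq -m$ to bound the amount by which an entry can overshoot in terms of $m$ and $n$ alone. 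More precisely, I would show: if $(d_1' \leq \dots \leq d_n')$ is obtained from $(d_1 \leq \dots \leq d_n)$ by one edge, then $\max_a d_a' - \min_a d_a' \leq \max(\, \max_a d_a - \min_a d_a,\ C(m,n)\,)$ for an explicit constant $C(m,n)$, by the same case split (whether the new spread exceeds $C(m,n)$ or not) used for the exponents $x_1, \dots, x_n$ in \emph{Part I} and \emph{Part II} of the proof of Proposition \ref{prop:proto}. Iterating, the spread along any path starting at a fixed vertex $v_0$ stays bounded by $\max(\text{spread}(v_0), C(m,n))$.

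Finally I would assemble the two bounds: a vertex $(d_1 \leq \dots \leq d_n)$ in the connected component of $v_0$ satisfies $\sum_a d_a = D$ and $\max_a d_a - \min_a d_a \leq B$ for fixed $D$ and $B$ depending only on $v_0$, $m$, $n$. These two constraints pin down $\max_a d_a$ (hence every $d_a$) to a finite range: from $n \min_a d_a \leq \sum d_a = D$ and $\max_a d_a \leq \min_a d_a + B$ one gets $\min_a d_a \leq D/n$ and $\max_a d_a \geq D/n$, so all entries lie in $[D/n - B, D/n + B]$, a finite set of integers. Therefore the connected component of $v_0$ is finite, proving the lemma. The only genuinely nontrivial input is the spread bound, and as noted it is a direct transcription of the mass-transfer argument already carried out in Proposition \ref{prop:proto}; I would phrase it as a short self-contained lemma rather than reproving it from scratch.
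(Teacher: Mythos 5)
The invariance of $\sum_a d_a$ along edges is correct, and indeed the paper uses it implicitly (``two sequences which are connected by a path in $G$ have the same average''). The problem is the single-edge spread estimate. As stated it is simply false: take $n=3$ and the vertex $(d_1,d_2,d_3)=(0,0,K)$ with $K$ arbitrarily large, $\sigma = (1\,2)$ (one inversion $(1,2)$), and $c_{1,2}=-m$. Then \eqref{eqn:edge 2} gives $(d_1',d_2',d_3') = (-m,m,K)$, a valid non-decreasing tuple, whose spread is $K+m$. The old spread $K$ can be made larger than any proposed $C(m,n)$, yet the new spread is strictly bigger, so the inequality $\max_a d_a' - \min_a d_a' \leq \max(\max_a d_a - \min_a d_a,\,C(m,n))$ fails. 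The heuristic that the edge ``moves mass toward the smaller-indexed slots'' only applies to positive $c_{a,b}$; the negative values, bounded below by $-m$, push mass outward and can increase the spread by up to roughly $mn^2$ in a single step, no matter how spread out the tuple already is. Since there is no a priori bound on path length, iterating your per-edge bound does not yield a per-path bound, and the argument has a genuine gap.

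To repair it one needs the paper's recursion. Once the spread exceeds roughly $2mn^2$, some gap $d_k-d_{k+1}$ drops below $-2mn$, and then \eqref{eqn:necessary} forces $\sigma$ to preserve both $\{1,\dots,k\}$ and $\{k+1,\dots,n\}$. The dynamics decouples into two independent instances (on the first $k$ and last $n-k$ coordinates), each with its own preserved sub-sum, and the induction hypothesis on $n$ bounds the drift of each block. That bounded drift in turn keeps the gap at position $k$ persistently large along the entire path, which is exactly what Claim \ref{claim:graph} encodes and what your argument is missing. Note also that the situation in Proposition \ref{prop:proto} is genuinely different: there the game terminates in a controlled number of moves (Part~I is bounded by $(d_1-d_n)\cdot|I|$, and Part~II terminates by the induction hypothesis), so a per-move overshoot of $\beta(n-1)$ is tolerable; here no such move count is available, so the per-edge overshoot cannot be absorbed without the block decomposition.
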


\noindent Lemma \ref{lem:combi} is a combinatorial statement (or a statement in the theory of root systems, see Remark \ref{rem:root}), which we will prove at the very end of the present Section. \\

\subsection{} 
\label{sub:direct sum}

In formula \eqref{eqn:non-zero}, we showed that if $v$ and $w$ are non-increasing words, then the pairing $\langle e_v,f_w\rangle$ vanishes unless $\overline{v}$ and $\overline{w}$ are connected by an edge in $G$, where $\overline{v}$ denotes the sequence of exponents of the word $v$. Because of this, for any connected component $H \subset G$, we may define the \underline{finite-dimensional} subspaces:
\begin{align*}
&\oCS_H = \sum_{w \text{ non-increasing}}^{\overline{w} \in H} \BF \cdot e_w \\
&\oCS^{\op}_H = \sum_{w \text{ non-increasing}}^{\overline{w} \in H} \BF \cdot f_w 
\end{align*}
of $\oCS$ and $\oCS^{\op}$, respectively. As we have just explained, we have:
\begin{equation}
\label{eqn:pair non-zero}
\left \langle \oCS_H, \oCS^{\op}_{H'} \right \rangle = 0
\end{equation}
for any distinct connected components $H \neq H'$ of $G$. Therefore, because the pairing \eqref{eqn:pairing} is non-degenerate in the first argument, then so is its restriction to:
\begin{equation}
\label{eqn:restricted pairing}
\oCS_H \otimes \oCS_H^{\op} \xrightarrow{\langle \cdot , \cdot \rangle} \BF
\end{equation}
for any connected component $H \subset G$. Switching the roles of $\oCS_H$ and $\oCS_H^{\op}$ (see Remark \ref{rem:symmetry}) implies the non-degeneracy of \eqref{eqn:restricted pairing} in the second argument as well. \\

\begin{proposition}
\label{prop:direct}

For any $n\in \BN$, we have:
\begin{equation}
\label{eqn:direct sum 1}
\bigoplus_{\text{length}(\bn)=n} \oCS_{\bn} = \bigoplus_{H \text{ a connected component of } G} \oCS_H
\end{equation}
and:
\begin{equation}
\label{eqn:direct sum 2}
\oCS_H = \bigoplus_{w \text{ standard}}^{\overline{w} \in H} \BF \cdot e_w
\end{equation}
as well as the analogous statements for $\oCS^{\emph{op}}$. \\

\end{proposition}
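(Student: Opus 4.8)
The plan is to establish \eqref{eqn:direct sum 1} first and then deduce \eqref{eqn:direct sum 2} from it. For \eqref{eqn:direct sum 1}, I would begin with Proposition \ref{prop:proto}, which asserts that the $e_w$ for non-increasing words $w$ of length $n$ span $\bigoplus_{\text{length}(\bn)=n}\oCS_{\bn}$. The exponent sequence $\bar w$ of a non-increasing word is, by \eqref{eqn:proto}, a non-decreasing tuple and hence a vertex of $G$; grouping the above generators according to the connected component of $G$ containing $\bar w$ gives at once $\bigoplus_{\text{length}(\bn)=n}\oCS_{\bn} = \sum_H \oCS_H$. To see that this sum is direct, I would take a relation $\sum_H x_H = 0$ with $x_H \in \oCS_H$ (finitely many nonzero) and pair it, for a fixed component $H'$, against $\oCS^{\op}_{H'}$: by \eqref{eqn:pair non-zero} every term with $H \neq H'$ pairs to zero, so $\langle x_{H'}, \oCS^{\op}_{H'}\rangle = 0$, and non-degeneracy of the restricted pairing \eqref{eqn:restricted pairing} in its first argument forces $x_{H'} = 0$. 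Running the same argument with the roles of $\oCS_H$ and $\oCS^{\op}_H$ exchanged (see Remark \ref{rem:symmetry}) yields the analogue for $\oCS^{\op}$.

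For \eqref{eqn:direct sum 2}, fix a connected component $H$. By Lemma \ref{lem:combi} it is finite, and since $I$ is finite there are only finitely many non-increasing words with exponents in $H$; I would list them as $w_1 < \dots < w_N$ in lexicographic order. Spanning of $\oCS_H$ by the standard $e_w$ with $\bar w \in H$ I would prove by downward induction on the index: if $w_i$ is standard, keep it; otherwise $e_{w_i}$ is a linear combination of $e_w$ for words $w > w_i$, which by \eqref{eqn:linear combination e} may be taken non-increasing, and then — since $e_{w_i} \in \oCS_H$ and the direct sum \eqref{eqn:direct sum 1} lets us project onto the summand $\oCS_H$ — we may further take their exponent sequences in $H$, so that $e_{w_i}$ becomes a combination of $e_{w_j}$ with $j > i$ and the inductive hypothesis applies (the base case $i = N$ being trivial: there $w_N$ is either standard or has $e_{w_N} = 0$). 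Linear independence is then formal: in a nontrivial relation among standard $e_w$ with $\bar w \in H$, isolating the lexicographically smallest $w_0$ occurring expresses $e_{w_0}$ as a combination of $e_w$ with $w > w_0$, contradicting standardness of $w_0$. The $\oCS^{\op}$ statements follow by the identical argument with \eqref{eqn:linear combination f} in place of \eqref{eqn:linear combination e} and second-argument non-degeneracy of \eqref{eqn:restricted pairing}, or by transporting through the isomorphism of Proposition \ref{prop:iso}, which preserves both the lexicographic order on words and the graph $G$.

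I do not expect a genuine obstacle here: every nontrivial input — non-degeneracy of the pairing, finiteness of the components of $G$, and the spanning of $\oCS$ by non-increasing words — is already available. The only point demanding care is the order of the two parts, since the upgrade from ``$e_{w_i}$ is a combination of $e_w$, $w > w_i$'' to ``$\dots$ with $\bar w \in H$'' genuinely uses the direct sum decomposition \eqref{eqn:direct sum 1}; and one should observe that a standard word automatically has $e_w \neq 0$ (else $e_w = 0$ lies in every span), which keeps the base case of the downward induction harmless.
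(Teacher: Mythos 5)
Your proposal is correct and follows essentially the same route as the paper: spanning comes from Proposition \ref{prop:proto}, directness of the sum over components from the orthogonality \eqref{eqn:pair non-zero} and non-degeneracy of \eqref{eqn:restricted pairing}, and \eqref{eqn:direct sum 2} from the usual ``extract a basis from a finite ordered spanning set'' argument. The one place you are more careful than the paper's terse one-line proof of \eqref{eqn:direct sum 2} is in spelling out that, by projecting to $\oCS_H$ via \eqref{eqn:direct sum 1}, ``$e_w$ is a combination of $e_{w'}$ for $w' > w$'' is equivalent to the same statement restricted to $\bar w' \in H$ — a point the paper leaves implicit but which is needed to identify the basis with the standard words.
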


\begin{proof} Because the $e_w$'s span $\oCS$ as $w$ runs over all non-increasing words, all that we need to do to prove \eqref{eqn:direct sum 1} is to show that there are no linear relations among the various direct summands of the RHS. To this end, assume that we had a relation:
$$
\sum_{H \text{ a connected component of } G} \alpha_H = 0
$$
for various $\alpha_H \in \oCS_H$. Pairing the relation above with a given $\oCS^{\op}_H$ implies that:
$$
\left \langle \alpha_H, \oCS^{\op}_H \right \rangle = 0
$$
Because the pairing \eqref{eqn:restricted pairing} is non-degenerate, this implies that $\alpha_H = 0$. As for \eqref{eqn:direct sum 2}, it holds because any vector space spanned by vectors $\alpha_1,\dots,\alpha_k$ has a basis consisting of those $\alpha_i$'s which cannot be written as linear combinations of $\{\alpha_j\}_{j > i}$. \\

\end{proof}

\subsection{} 
\label{sub:proof}

We are now ready to prove our main Theorem. \\

\begin{proof} \emph{of Theorem \ref{thm:main}:} Consider any $R \in \CS_{\bn}$. From \eqref{eqn:pairing formula}, it is easy to see that:
$$
\left \langle R, f_{[i_1^{(d_1)} \dots i_n^{(d_n)} ]} \right \rangle = 0
$$
if $d_1$ is small enough. However, by Lemma \ref{lem:finite}, there are only finitely many non-increasing words $w$ of given degree with $d_1$ bounded below. This implies that:
$$
\Big \langle R, f_w \Big \rangle \neq 0
$$
only for finitely many non-increasing words $w$. Letting $H_1, \dots, H_t \subset G$ denote the connected components which contain the sequences of exponents of the aforementioned words, then \eqref{eqn:pair non-zero} and the non-degeneracy of the pairings \eqref{eqn:restricted pairing} imply that there exists an element:
$$
R' \in \oCS_{H_1} \oplus \dots \oplus \oCS_{H_t} \subset \oCS
$$
such that $\langle R, f_w \rangle = \langle R', f_w \rangle$ for all non-increasing words $w$. Then the non-degeneracy statement \eqref{eqn:non-deg} implies that $R = R' \in \oCS$, as we needed to prove. 

\end{proof}

\subsection{} 
\label{sub:pbw} 

As a consequence of Theorem \ref{thm:main} and \eqref{eqn:direct sum 1}--\eqref{eqn:direct sum 2}, we have:
\begin{align}
&\CS = \bigoplus_{w \text{ standard}} \BF \cdot e_w \label{eqn:pbw e} \\
&\CS^{\op} = \bigoplus_{w \text{ standard}} \BF \cdot f_w \label{eqn:pbw f}
\end{align}
Even though the vector spaces $\CS$ and $\CS^{\op}$ are infinite-dimensional, the fact that they arise as direct sums of finite-dimensional vector spaces \eqref{eqn:direct sum 1}--\eqref{eqn:direct sum 2} which are mutually orthogonal under the pairing, allows us to define the dual bases:
\begin{align}
&\CS = \bigoplus_{w \text{ standard}} \BF \cdot e^w \label{eqn:pbw ee} \\
&\CS^{\op} = \bigoplus_{w \text{ standard}} \BF \cdot f^w \label{eqn:pbw ff}
\end{align} 
In other words, we have by definition:
\begin{equation}
\label{eqn:dual bases}
\Big \langle e^v, f_w \Big \rangle = \Big \langle e_v, f^w \Big \rangle = \delta^v_w
\end{equation}
for all standard words $v$ and $w$. \\

\begin{definition}

Any non-zero $R \in \CS$ can be written in the form:
\begin{equation}
\label{eqn:leading word monomials}
R = \dots + \text{constant} \cdot z_{i_1a_1}^{d_1} \dots z_{i_na_n}^{d_n} + \dots
\end{equation}
where we order the variables in any monomial above such that the word:
\begin{equation}
\label{eqn:lexicographically largest word}
\left[ i_1^{(d_1)} \dots i_n^{(d_n)} \right]
\end{equation}
is non-increasing. The \textbf{leading word} of any non-zero $R \in \CS$ is the lexicographically largest word \eqref{eqn:lexicographically largest word} among all the constituent monomials of $R$ in \eqref{eqn:leading word monomials}. \\

\end{definition}

\noindent Leading words are always non-increasing in the sense of \eqref{eqn:proto}.  The following is a straightforward consequence of \eqref{eqn:pairing formula}, which we leave as an exercise to the reader. \\

\begin{proposition}
\label{prop:leading pairing}

A non-zero element $R \in \CS$ has leading word $v$ if and only if:
\begin{equation}
\label{eqn:standard eq}
\Big \langle R, f_w \Big \rangle \text{ is } \begin{cases} \neq 0 &\text{if }w = v \\ = 0 &\text{if }w > v \end{cases}
\end{equation}

\end{proposition}

\medskip

\noindent Since any $f_w$ is a linear combination of $f_y$'s for standard $y\geq w$, then \eqref{eqn:dual bases} implies that $\langle e^v, f_w \rangle = 0$ for all words $w > v$. Therefore, Proposition \ref{prop:leading pairing} implies that:
\begin{equation}
\label{eqn:leading word}
e^v \text{ has leading word }v
\end{equation}
for all standard words $v$. \\

\begin{proposition}
\label{prop:standard is leading}

A word $v$ is standard if and only if it is the leading word of some non-zero $R \in \CS$. \\

\end{proposition}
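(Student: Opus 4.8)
The statement is an equivalence with two halves of rather different character, and I would dispatch them separately. For the implication ``$v$ standard $\Rightarrow$ $v$ is a leading word'' there is in effect nothing to prove beyond producing a witness: if $v$ is standard then $e^v$ is a nonzero element of $\CS$ by \eqref{eqn:pbw ee}, and \eqref{eqn:leading word} says precisely that its leading word is $v$. So this half is immediate from what has already been assembled.

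For the converse I would run a unitriangularity argument with respect to the lexicographic order on words. Suppose $v$ is the leading word of some nonzero $R \in \CS$; after replacing $R$ by its homogeneous component of degree $\deg v$ (still an element of $\CS$ with the same leading word) we may assume $R$ homogeneous. The structural input I would use is \eqref{eqn:pbw ee}: although $\CS$ is infinite-dimensional, it is the direct sum of the finite-dimensional blocks $\oCS_H$ of \eqref{eqn:direct sum 1}--\eqref{eqn:direct sum 2}, each of which has $\{e^w : \bar w \in H,\ w \text{ standard}\}$ as a basis. Hence $R$ expands as a \emph{finite} linear combination $R = \sum_{w\text{ standard}} c_w e^w$, and since $R \neq 0$ the finite set $\{w : c_w \neq 0\}$ is nonempty; let $u$ be its lexicographic maximum.

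It then remains to check that the leading word of $R$ is exactly $u$. By \eqref{eqn:leading word}, each $e^w$ equals its leading word monomial $m_w := \Sym[\dots]$ plus a linear combination of monomial symmetric functions $m_{w'}$ with $w' < w$. Therefore the only way $m_u$ arises in $R = \sum_w c_w e^w$ is through $e^u$ itself --- the terms $e^w$ with $c_w \neq 0$, $w \neq u$, satisfy $w < u$, hence contribute only $m_{w'}$ with $w' \leq w < u$ --- which gives $m_u$ the coefficient $c_u \neq 0$ in $R$; and by the same token no $m_{w'}$ with $w' > u$ appears in $R$. Hence $\lead(R) = u$, so $v = u$ is standard. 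I do not foresee a real obstacle here; the one point genuinely requiring care --- and the reason the lemma is not purely formal --- is the finiteness of the expansion of $R$ in the $e^w$'s, which is why one must invoke the decomposition of $\CS$ into finite-dimensional orthogonal blocks rather than finite-dimensionality of $\CS$ itself (which is false), the remainder being routine bookkeeping with the total order on words.
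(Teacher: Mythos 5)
Your proof is correct. The ``only if'' direction is identical to the paper's: $e^v$ is a non-zero element of $\CS$ with leading word $v$ by \eqref{eqn:leading word}.

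For the ``if'' direction you run a genuinely different argument. The paper works on the dual side: it observes that ``$v$ is the leading word of $R$'' translates, via \eqref{eqn:pairing formula}, into the statement \eqref{eqn:standard eq} that $\langle R, f_v\rangle \neq 0$ while $\langle R, f_w\rangle = 0$ for $w > v$; this immediately shows $f_v$ is not a linear combination of $\{f_w\}_{w>v}$, and then one passes from ``$f$-standardness'' to ``$e$-standardness'' via the (semilinear) isomorphism of Proposition~\ref{prop:iso}. You instead work entirely on the $e$-side: expand $R$ in the basis $\{e^w\}_{w\ \text{standard}}$ of \eqref{eqn:pbw ee}, take the lex-maximal $u$ occurring with non-zero coefficient, and use the unitriangularity \eqref{eqn:leading word} to conclude that $\lead(R) = u$. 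The two arguments rely on the same preparatory material (the finite-dimensional orthogonal blocks and the form of $e^w$), but yours has a small advantage: it never needs to transfer a linear-independence statement from the $f$'s to the $e$'s, so the implicit appeal to Proposition~\ref{prop:iso} in the paper's phrasing is avoided. You also correctly flag the one non-formal input --- the finiteness of the expansion --- although I would add that the paper's pairing argument sidesteps this point entirely, which is one reason it is slightly more economical even if less self-contained on the $e$-side.
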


\begin{proof} The ``if"  implication follows from \eqref{eqn:standard eq}, as it precludes $f_v$ from being a linear combination of $f_w$ with $w > v$, while the ``only if" implication was proved by \eqref{eqn:leading word}. Note that we are tacitly identifying the notion of standard words for $\CS$ and $\CS^{\op}$, i.e. saying that $e_v$ is a linear combination of $e_w$ with $w > v$ if and only if $f_v$ is a linear combination of $f_w$ with $w > v$, which is allowed due to Proposition \ref{prop:iso}. 

\end{proof}

\noindent The Proposition above tells us how to recursively express any $R \in \CS$ in the basis \eqref{eqn:pbw ee}: let $\alpha$ be the coefficient of the leading word monomial of $R$ (call the leading word $v$); then the leading word of $R' = R - \alpha e^v$ is strictly smaller than $v$, and we repeat the process. This terminates after finitely many steps due to Lemma \ref{lem:finite}. \\






\subsection{} We still owe the reader a proof of Lemma \ref{lem:combi}. \\

\begin{proof} \emph{of Lemma \ref{lem:combi}:} Let us consider an edge between two sequences:
\begin{equation}
\label{eqn:edge 3}
(d_1 \leq \dots \leq d_n) \quad \longrightarrow \quad (d'_1 \leq \dots \leq d'_n)
\end{equation}
in the graph $G$, where we assume that the two sequences are related by \eqref{eqn:edge 2}. Let us see what the existence of such an edge says about the sequence $(d_1,\dots,d_n)$ in relation to the permutation $\sigma$. For all $a < b$ we have:
$$
d_{\sigma(a)} - \sum^{s < a}_{\sigma(s) > \sigma(a)} c_{s,a} + \sum^{a < t}_{\sigma(a) > \sigma(t)} c_{a,t} = d_a' \leq d_b'  = d_{\sigma(b)} - \sum^{s < b}_{\sigma(s) > \sigma(b)} c_{s,b} + \sum^{b < t}_{\sigma(b) > \sigma(t)} c_{b,t}
$$
Let us consider a pair $a < b$ such that $\sigma(a) > \sigma(b)$, which is maximal in the sense that any $s < a$ has the property that $\sigma(s) < \sigma(a)$ and any  $t > b$ has the property that $\sigma(t) > \sigma(b)$. The inequality in the display above then reads:
$$
d_{\sigma(b)} - d_{\sigma(a)} \geq \sum^{s < b}_{\sigma(s) > \sigma(b)} c_{s,b} - \sum^{s < a}_{\sigma(s) > \sigma(a)} c_{s,a} + \sum^{a < t}_{\sigma(a) > \sigma(t)} c_{a,t} - \sum^{b < t}_{\sigma(b) > \sigma(t)} c_{b,t}
$$
By the maximality assumption of the pair $a < b$, the two sums with minus signs in front are vacuous, and from the assumption $c_{a,b} \geq -m$ for all $a,b$ we infer that:
\begin{equation}
\label{eqn:ineq other way}
d_{\sigma(b)} - d_{\sigma(a)} \geq -2mn
\end{equation}
For any given $k \in \{1,\dots,n-1\}$, assume that $\sigma$ does not send the set $\{1,\dots,k\}$ to itself. Then there exist numbers $a$ and $b$ such that $a \leq k < b$ and $\sigma(b) \leq k < \sigma(a)$. Moreover, we may choose the pair $a < b$ maximal, and so formula \eqref{eqn:ineq other way} applies. However, the fact that $d_1 \leq \dots \leq d_n$ implies that:
\begin{equation}
\label{eqn:necessary}
d_k - d_{k+1} = \underbrace{d_{\sigma(b)} - d_{\sigma(a)}}_{\geq - 2mn} + \underbrace{d_k - d_{\sigma(b)}}_{\geq 0} + \underbrace{d_{\sigma(a)} - d_{k+1}}_{\geq 0} \geq - 2mn
\end{equation}
Therefore, the only $k$ for which we might have $d_k - d_{k+1} < -2mn$ are those for which $\sigma$ sends the set $\{1,\dots,k\}$ to itself and the set $\{k+1,\dots,n\}$ to itself. \\

\noindent We are now ready to prove the following statement by induction on $n$: \underline{there exists a} \underline{natural number $\gamma(n)$ such that two sequences $(d_1 \leq \dots \leq d_n)$ and $(d'_1 \leq \dots \leq d'_n)$} \\
\underline{are connected by a path in $G$ only if $|d_1-d'_1| \leq \gamma(n)$ and $|d_n - d_n'| \leq \gamma(n)$}. This statement implies Lemma \ref{lem:combi}, because for any fixed $d_1,\dots,d_n$, there exist finitely many sequences $d_1' \leq \dots \leq d_n'$ which have $d_1'$ bounded below and $d_n'$ bounded above. The base case of the induction is vacuous, as we can take $\gamma(1) = 0$. For the induction step, assume that $\gamma(1),\dots,\gamma(n-1)$ have been constructed, and define: 
$$
\gamma(n) = (n-1) \cdot \max_{1 \leq k \leq n-1} [\gamma(k)+\gamma(n-k)+2mn]
$$ 
Assume for the purpose of contradiction that the two sequences $(d_1,\dots,d_n)$ and $(d_1',\dots,d_n')$ are connected in the graph $G$, all the while $d_1 < d'_1 - \gamma(n)$ (the situation when $d_1 > d'_1 + \gamma(n)$ is proved by switching the roles of $d_k$ and $d_k'$, and the situation when $|d_n - d_n'| >  \gamma(n)$ is analogous, and so left to the interested reader). Because two sequences connected by a path in $G$ have the same average, we have:
$$
\min(d_1',\dots,d_n') = d_1' \leq d_n = \max(d_1,\dots,d_n)  \quad \Rightarrow \quad d_1 - d_n < - \gamma(n)
$$
Because of this, the pigeonhole principle implies that there exists $k \in \{1,\dots,n-1\}$ such that:
\begin{equation}
\label{eqn:d ineq}
d_k - d_{k+1} < - \frac {\gamma(n)}{n-1} \leq - \gamma(k) - \gamma(n-k) - 2mn
\end{equation}

\begin{claim}
\label{claim:graph}

Only vertices of the form:
\begin{equation}
\label{eqn:sequence}
(s_1,\dots,s_n) \quad \text{with} \quad \begin{cases} |s_1 - d_1| \leq \gamma(k) \\  |s_k - d_k| \leq \gamma(k) \\ |s_{k+1} - d_{k+1}| \leq \gamma(n-k) \\ |s_n - d_n| \leq \gamma(n-k) \end{cases}
\end{equation}
can be reached by a path in $G$ starting from $(d_1,\dots,d_n)$. \\

\end{claim}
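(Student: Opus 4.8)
\emph{Proof proposal for Claim \ref{claim:graph}.}
The plan is to deduce the claim from the fact that, once the gap $d_{k+1}-d_k$ is as large as in \eqref{eqn:d ineq}, the graph $G$ decouples — along every path issuing from $(d_1,\dots,d_n)$ — into the analogous graphs $G_k$ and $G_{n-k}$ on non-decreasing tuples of lengths $k$ and $n-k$, to which the inductive hypothesis of Lemma \ref{lem:combi} applies. The first step is a bookkeeping observation: if a permutation $\tau$ sends $\{1,\dots,k\}$ to itself (hence also $\{k+1,\dots,n\}$ to itself), then in \eqref{eqn:edge 2} every coefficient $c_{a,b}$ with $a\le k<b$ is vacuous, since then $\tau(a)\le k<\tau(b)$; consequently $d'_a$ for $a\le k$ depends only on $d_1,\dots,d_k$ and on $\tau|_{\{1,\dots,k\}}$, and $(d'_1,\dots,d'_k)$ is obtained from $(d_1,\dots,d_k)$ by an edge of $G_k$ when $\tau|_{\{1,\dots,k\}}\ne\text{Id}$ (the bound $c_{a,b}\ge -m$ being inherited, so this is a legitimate edge of $G_k$) and equals $(d_1,\dots,d_k)$ otherwise. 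Symmetrically, $(d'_{k+1},\dots,d'_n)$ is obtained from $(d_{k+1},\dots,d_n)$ by an edge of $G_{n-k}$ or equals it.

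Next I would run an induction on the length $\ell$ of a path in $G$ starting at $(d_1,\dots,d_n)$, proving the strengthened assertion that its endpoint $(s_1,\dots,s_n)$ satisfies: $(s_1,\dots,s_k)$ is reachable from $(d_1,\dots,d_k)$ by a (possibly empty) path in $G_k$, and $(s_{k+1},\dots,s_n)$ is reachable from $(d_{k+1},\dots,d_n)$ by a (possibly empty) path in $G_{n-k}$. This strengthened assertion already yields \eqref{eqn:sequence}: the inductive hypothesis of Lemma \ref{lem:combi} applied to $G_k$ gives $|s_1-d_1|\le\gamma(k)$ and $|s_k-d_k|\le\gamma(k)$ (here $s_1$ and $s_k$ are the minimum and maximum of $s_1,\dots,s_k$, as vertices of $G$ are non-decreasing), and applied to $G_{n-k}$ it gives $|s_{k+1}-d_{k+1}|\le\gamma(n-k)$ and $|s_n-d_n|\le\gamma(n-k)$. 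For the inductive step, let $(s_1,\dots,s_n)$ be the endpoint of a length-$\ell$ path — so, inductively, \eqref{eqn:sequence} holds for it — and let $(s_1,\dots,s_n)\to(s'_1,\dots,s'_n)$ be an edge realized by some $\tau\ne\text{Id}$. Combining those bounds with the hypothesis \eqref{eqn:d ineq} on $k$, namely $d_{k+1}-d_k>\gamma(k)+\gamma(n-k)+2mn$, I get $s_k-s_{k+1}\le(d_k+\gamma(k))-(d_{k+1}-\gamma(n-k))<-2mn$; by \eqref{eqn:necessary}, applied at the vertex $(s_1,\dots,s_n)$, this forces $\tau$ to preserve $\{1,\dots,k\}$ and $\{k+1,\dots,n\}$. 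The bookkeeping observation then shows that $(s'_1,\dots,s'_k)$ prolongs the $G_k$-path and $(s'_{k+1},\dots,s'_n)$ prolongs the $G_{n-k}$-path, closing the induction. Claim \ref{claim:graph} is then the case where $(s_1,\dots,s_n)$ is an arbitrary vertex reachable from $(d_1,\dots,d_n)$.

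The only delicate point — and the reason the strengthened inductive assertion is needed rather than just \eqref{eqn:sequence} itself — is keeping the decoupling self-sustaining: a priori, successive moves could slowly erode the gap between the $k$-th and $(k+1)$-th coordinates until a block-mixing permutation becomes admissible. The strengthened assertion rules this out, because every intermediate vertex is forced to stay $\gamma(k)$-close to $(d_1,\dots,d_k)$ in its first block and $\gamma(n-k)$-close to $(d_{k+1},\dots,d_n)$ in its second, so the gap can never fall below $d_{k+1}-d_k-\gamma(k)-\gamma(n-k)>2mn$ and block-mixing permutations remain forbidden along the whole path. Beyond that, the remaining verification is the routine index bookkeeping of the first paragraph — that restricting \eqref{eqn:edge 2} to a block of indices produces a bona fide instance of \eqref{eqn:edge 2} for the shorter graph, with the same lower bound $-m$ on the coefficients.
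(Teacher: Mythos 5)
Your proof is correct and follows essentially the same strategy as the paper's: induction on path length, with \eqref{eqn:necessary} forcing each permutation along the path to preserve $\{1,\dots,k\}$ and $\{k+1,\dots,n\}$, and the outer induction hypothesis of Lemma \ref{lem:combi} then applied to the two blocks separately. Your explicitly strengthened inner induction hypothesis (reachability in the auxiliary graphs $G_k$ and $G_{n-k}$) is a useful clarification of what the paper leaves implicit, but it is not a different argument.
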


\noindent The Claim concludes the proof of the Lemma, as we assumed that $|d_1'-d_1| > \gamma(n) > \gamma(k)$, which means that the sequence $(d_1',\dots,d_n')$ is not among the \eqref{eqn:sequence}. \\

\begin{proof} \emph{of Claim \ref{claim:graph}:} We will prove the required statement by induction on the length of the path. Indeed, assume we have a path in $G$ of the form:
\begin{equation}
\label{eqn:path}
(d_1,\dots,d_n) \longrightarrow \dots \longrightarrow (s_1,\dots,s_n) \longrightarrow (t_1,\dots,t_n)
\end{equation}
and the induction hypothesis tells us that $(s_1,\dots,s_n)$, as well as all the vertices on the path \eqref{eqn:path} between $(d_1,\dots,d_n)$ and $(s_1,\dots,s_n)$, are of the form \eqref{eqn:sequence}. As:
\begin{equation}
\label{eqn:s ineq}
s_k - s_{k+1} = \underbrace{s_k - d_k}_{\leq \gamma(k)} + \underbrace{d_k - d_{k+1}}_{<-\gamma(k) - \gamma(n-k) - 2mn} + \underbrace{d_{k+1} - s_{k+1}}_{\leq \gamma(n-k)} < - 2mn
\end{equation}
the sentence after relation \eqref{eqn:necessary} implies that all edges emanating from $(s_1,\dots,s_k)$ correspond to permutations $\sigma$ that send $\{1,\dots,k\}$ to itself and $\{k+1,\dots,n\}$ to itself. However, the same is true for all intermediate vertices along the path \eqref{eqn:path}, and thus in getting from $(d_1,\dots,d_n)$ to $(t_1,\dots,t_n)$ one only uses permutations $\sigma$ that send $\{1,\dots,k\}$ to itself. The fact that $(t_1,\dots,t_n)$ is of the form \eqref{eqn:sequence} then follows from the induction hypothesis of the underlined claim on the previous page.

\end{proof}

\end{proof}

\begin{remark}
\label{rem:root}

Lemma \ref{lem:combi} is the type $A_{n-1}$ version of the following statement, which we invite the interested reader to prove in complete generality. For a finite type root system, fix a choice of positive and negative roots $\Delta = \Delta^+ \sqcup \Delta^-$. We will denote the weight lattice by $P$, and the cone of dominant weights by $P^+$ (the latter is a fundamental chamber for the action of the Weyl group $W$ on $P$). Fix a natural number $m$, and let $G$ be the graph with vertex set $P^+$ and edge set:
\begin{equation}
\label{eqn:dominant weights}
\lambda \quad \longrightarrow \quad \sigma(\lambda) + \sum_{\alpha \in \Delta^+ \cap \sigma(\Delta^-)} c_\alpha \cdot \alpha
\end{equation}
for any $\sigma \in W$ and any $c_\alpha \in \mathbb{Z}_{\leq m}$ (it is implied that the weight in the right-hand side of \eqref{eqn:dominant weights} should be dominant, in order for the right-hand side of \eqref{eqn:dominant weights} to define an edge set on $P^+$). Show that all the connected components of $G$ are finite. \\

\end{remark} 

\section{Twists of the shuffle product and Hopf algebras}
\label{sec:r}

\medskip

\subsection{} 
\label{sub:r}

As we have seen in Remark \ref{rem:modify}, using different line bundles from \eqref{eqn:line bundle} leads to multiplying the rational function $\zeta_{ij}$ of \eqref{eqn:def zeta} by $\pm$ a suitable monomial. The particular example we will consider in the present Section is:
\begin{equation}
\label{eqn:def zeta final}
\zeta'_{ij}(x) = \left(\frac {1-xq^{-1}}{1-x} \right)^{\delta_j^i} \prod_{e = \oij \in E} \left(\frac 1{t_e} - x \right) \prod_{e = \oji \in E} \left(1 - \frac {t_e}{qx} \right)
\end{equation}
We will consider $\CS' = \CS$ as an $\BF$-vector space, but make $\CS'$ into an algebra using the multiplication \eqref{eqn:shuf prod} with $\zeta'_{ij}$ instead of $\zeta_{ij}$. As we will see in the following Subsections, this has a minimal effect on our treatment of $\CS'$ as an \underline{algebra}, but it allows us to think of it as a \underline{bialgebra}. Formula-wise, this happens because:
\begin{align}
&\frac {\zeta'_{ij}(x)}{\zeta'_{ji}(x^{-1})} \Big|_{x = 0} = q^{\delta_j^i} \prod_{e = \oij \in E} \frac 1{t_e} \prod_{e = \oji \in E} \frac {t_e}q \label{eqn:constant term 1} \\
&\frac {\zeta'_{ij}(x)}{\zeta'_{ji}(x^{-1})} \Big|_{x = \infty} = \frac 1{q^{\delta_j^i}} \prod_{e = \oij \in E} \frac q{t_e} \prod_{e = \oji \in E} t_e  \label{eqn:constant term 2}
\end{align}
as opposed from the analogous ratios for the function $\zeta_{ij}$, which have zeroes/poles at $0$/$\infty$. The RHS of \eqref{eqn:constant term 1}--\eqref{eqn:constant term 2} can be construed as certain deformations (in the sense of the presence of the parameters $t_e$) of the usual $q$-Euler form of the quiver $Q$. \\


\begin{example}
\label{ex:loop}

When $Q$ has no loops or multiple edges, and we let $t_e = q^{\frac 12}, \ \forall e \in E$, the algebra $\CS'$ is isomorphic to the algebra $\overline{Sh}$ of \cite{E} (our $q$ is their $q^2$). In Subsection \ref{sub:km quiver}, we will use  Theorem \ref{thm:main} (or more precisely, Corollary \ref{cor:gen}, where we deal with the situation of specialized parameters) to show that this shuffle algebra is isomorphic to the positive half of the quantum loop group associated to $Q$. \\

\end{example}

\begin{example}
\label{ex:eha}

When $Q$ is the Jordan quiver (one vertex and one loop $e$), the algebra $\CS'$ is isomorphic to the spherical elliptic Hall algebra of \cite{BS}. More specifically, the following map from $\CS'$ to the shuffle algebra $\CA^+$ studied in \cite{Shuf}:
$$
R(z_1,\dots,z_n) \mapsto R(z_1,\dots,z_n) \prod_{1\leq i \neq j \leq n} \frac {1 - \frac {z_i}{z_j}}{\left(1 - \frac {z_i}{z_jq_1} \right)\left(1 - \frac {z_i}{z_jq_2} \right)}
$$
is an isomorphism (the parameters $q_1$ and $q_2$ of $\CA^+$ are identified with our $t_e$ and $\frac q{t_e}$). It was shown in \cite{Shuf} that $\CA^+$ is isomorphic to the spherical elliptic Hall algebra. \\

\end{example}

\subsection{} We will now show how to modify the contents of Section \ref{sec:proof} to obtain the analogue of Theorem \ref{thm:main} for the algebra $\CS'$ instead of $\CS$. In what follows, every time we say ``just like in Section \ref{sec:proof}", we mean ``just like in Section \ref{sec:proof}, with the rational function $\zeta_{ij}$ replaced by $\zeta'_{ij}$". Proposition \ref{prop:pairing} carries through just like in Section \ref{sec:proof}, and the first place where we need to make a substantial modification is in Definition \ref{def:proto}. Specifically, we now call a word:
$$
\left[ i_1^{(d_1)} \dots i_n^{(d_n)} \right]
$$
\textbf{non-increasing} if we have the following inequalities for all $1 \leq a < b \leq n$:
\begin{equation}
\label{eqn:non-increasing 2}
\begin{cases} d_a < d_b + \sum_{s=a}^{b-1} \#_{i_si_b} \\ \quad \text{or} \\ d_a = d_b + \sum_{s=a}^{b-1}  \#_{i_si_b} \text{ and } i_a \geq i_b \end{cases}
\end{equation}
where $\#_{ij}$ was defined in \eqref{eqn:def sharp}. \\ 

\begin{proof} \emph{of Proposition \ref{prop:proto} in the case at hand:} we will prove the analogue of \eqref{eqn:proto strong} by induction on $n$. Running the natural analogue of the proof of Proposition \ref{prop:proto}, every $e_v$ can be written as a linear combination of $e_w$'s with $w \geq v$ such that:
\begin{equation}
\label{eqn:words remark}
\text{if} \quad w = \left[i_1^{(d_1)} \dots i_n^{(d_n)} \right] \quad \text{then} \quad d_k \leq d_{k+1} + \#_{i_ki_{k+1}}, \ \forall k
\end{equation}
Since this property is weaker than \eqref{eqn:non-increasing 2}, some further explanation is in order. First of all, if a word $w$ as above satisfies $d_{k+1} - d_k > 2\max(\beta(k), \beta(n-k)) + 6n|E|$ for some $k \in \{1,\dots,n-1\}$ (where $\beta(n)$ denotes a natural number which ensures that the analogue of \eqref{eqn:proto strong} holds), then we can use the induction hypothesis to write:
$$
e_{i_1,d_1} * \dots * e_{i_k,d_k} \qquad \text{and} \qquad e_{i_{k+1},d_{k+1}} * \dots * e_{i_n,d_n}
$$
as linear combinations of non-increasing words in the sense of \eqref{eqn:non-increasing 2}. Moreover, the concatenations of the respective non-increasing words will also be non-increasing due to the large gap between $d_k$ and $d_{k+1}$. Therefore, we are left to contend with the finitely many (in each degree) words \eqref{eqn:words remark} where the $d_k$'s are all contained in an interval of some universally bounded length. The fact that there are finitely many such words is crucial, as it reduces our task to the following weaker fact. \\

\begin{claim}
\label{claim:word}

If $w$ is not a non-increasing word, then one can write $e_w$ as a linear combination of $e_y$'s with $y > w$. \\

\end{claim}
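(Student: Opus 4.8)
The plan is to prove Claim \ref{claim:word} in the form in which it is used: if $w=\left[i_1^{(d_1)}\dots i_n^{(d_n)}\right]$ is \emph{not} non-increasing in the sense of \eqref{eqn:non-increasing 2}, then $e_w$ is an $\BF$-linear combination of $e_y$ with $y>w$. This is enough: among the finitely many words of a fixed degree whose exponents lie in the universally bounded interval singled out just above, the lexicographic order admits no infinite ascending chain, so iterating the displayed statement (together with the already-performed large-gap reduction, which only outputs non-increasing words) must terminate, and it terminates precisely at non-increasing words — which is the analogue of Proposition \ref{prop:proto} for $\CS'$.

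First I would record the twisted quadratic relations, i.e. the analogue of \eqref{eqn:quadratic 2}--\eqref{eqn:quadratic 4}. Substituting $\zeta'_{ij}$ for $\zeta_{ij}$ in \eqref{eqn:quadratic} and clearing denominators — now one must also multiply through by $z^{\#_{\oji}}w^{\#_{\oij}}$ (and an extra unit-shift monomial when $i=j$, coming from the $1-xq^{-1}$ factor), since $\zeta'$ is a genuine Laurent function rather than a polynomial — and equating coefficients of $z^{-\alpha}w^{-\beta}$ yields, for $i\ne j$ and all $\alpha,\beta\in\BZ$, a relation
$$
\sum_{k=-\#_{\oij}}^{\#_{\oji}} c_k\cdot e_{i,\alpha-k}*e_{j,\beta+k} \;=\; \sum_{l=-\#_{\oji}}^{\#_{\oij}} c'_l\cdot e_{j,\beta-l}*e_{i,\alpha+l}
$$
whose extreme coefficients $c_{-\#_{\oij}}$, $c_{\#_{\oji}}$, $c'_{-\#_{\oji}}$, $c'_{\#_{\oij}}$ are, up to sign, precisely the nonzero constants of \eqref{eqn:constant term 1}--\eqref{eqn:constant term 2} (this is the single point at which the twist is essential). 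Solving for the $k=-\#_{\oij}$ term gives, for all $p,q\in\BZ$,
$$
e_{i,p}*e_{j,q} \;=\; \sum_{x=0}^{\#_{ij}}(\textrm{coeff})\cdot e_{j,q+x}*e_{i,\bullet} \;+\; \sum_{y\geq 1}(\textrm{coeff})\cdot e_{i,p-y}*e_{j,\bullet}
$$
in which the $x=0$ term $e_{j,q}*e_{i,p}$ and the extreme $x=\#_{ij}$ term both appear with invertible coefficient; a completely analogous relation holds for $i=j$.

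Next I would exploit a violation. Pick any pair $a<b$ for which \eqref{eqn:non-increasing 2} fails and transport the letter $i_b^{(d_b)}$ leftward to position $a$, applying the relation above $b-a$ times to commute it past $i_{b-1}^{(d_{b-1})},\dots,i_a^{(d_a)}$ in turn. At each commutation the terms coming from the second sum — in which the passed-over letter's exponent strictly drops — give words agreeing with $w$ on all earlier positions and carrying a strictly larger letter (same vertex, smaller exponent) in the slot of that letter; these lie above $w$ and I set them aside. I keep processing only the ``traveling'' terms, in which $i_b^{(\bullet)}$ survives with its exponent raised by some amount in $[0,\#_{i_si_b}]$ at the step passing $i_s$; the maximal raise at every step is realized with nonzero (boundary) coefficient. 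After $b-a$ steps the traveling letter occupies position $a$ with exponent $d_b+X$, $0\le X\le\sum_{s=a}^{b-1}\#_{i_si_b}$, and positions $1,\dots,a-1$ are untouched. The failure of \eqref{eqn:non-increasing 2} gives $d_a\ge d_b+\sum_{s=a}^{b-1}\#_{i_si_b}\ge d_b+X$, hence $d_b+X\le d_a$; if $d_b+X<d_a$ then $i_b^{(d_b+X)}>i_a^{(d_a)}$ by exponent comparison, and if $d_b+X=d_a$ then $X$ is maximal and \eqref{eqn:non-increasing 2} fails in its equality case, so $i_a<i_b$ and again $i_b^{(d_b+X)}>i_a^{(d_a)}$. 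Thus every word produced — the set-aside ones and the final traveling ones — is strictly larger than $w$, which is the claim.

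The main obstacle I anticipate is the combinatorial bookkeeping in the previous paragraph: verifying that the cumulative upward shift of the traveling letter across a chain of non-adjacent commutations is bounded by exactly $\sum_{s=a}^{b-1}\#_{i_si_b}$ — the buffer hard-wired into \eqref{eqn:non-increasing 2} — and cleanly treating the tie-break case, which forces one to track the explicit values \eqref{eqn:constant term 1}--\eqref{eqn:constant term 2} rather than merely their non-vanishing, plus the minor extra unit shift in the $i=j$ relation. Everything else is a routine transcription of the corresponding arguments in Section \ref{sec:proof}.
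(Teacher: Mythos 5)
Your approach is a pairwise‐commutator version of the paper's argument: you iterate the two‐letter relation to transport $i_b^{(d_b)}$ leftward via the cyclic permutation, whereas the paper applies the iterated relation \eqref{eqn:quadratic many} for that same permutation in a single shot, extracts one coefficient, and cites Lemma \ref{lem:leading word} for the comparison $y>w$. The underlying mechanism is the same, but your step-by-step version has a gap at the point you label a ``minor extra unit shift.''

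Concretely, you assert that when the traveling letter passes $i_s$, its exponent is raised by an amount in $[0,\#_{i_si_b}]$. For $i_s\neq i_b$ this is correct: $\zeta'_{ij}(x)$ is a Laurent polynomial whose exponents span exactly $\#_{ij}$. For $i_s=i_b=i$ it is not: the factor $(1-xq^{-1})/(1-x)$ in $\zeta'_{ii}$ forces the cleared two-letter relation to span an exponent interval of length $\#_{ii}+1$, so the raw raise lies in $[0,\#_{i_si_b}+1]$. (This extra unit is already visible in the untwisted analogue \eqref{eqn:quadratic 4}, whose leading right-hand term $-e_{i,b+1}*e_{i,a-1}$ is shifted by one.) The cumulative raise $X$ is therefore only bounded by $\sum_s(\#_{i_si_b}+\delta^{i_s}_{i_b})$, which can exceed the threshold $\sum_s\#_{i_si_b}$ hard-wired into \eqref{eqn:non-increasing 2} by one for each same-vertex intermediate letter, and then your final comparison $d_b+X\leq d_a$ at position $a$ does not follow.

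What saves the argument is a cancellation: the extreme $x=\#_{ii}+1$ term at a same-vertex step coincides with the left-hand side exactly when the local exponent gap $p_s-q_s$ equals $\#_{ii}+1$, and its coefficient is $-1$, so it is absorbed into the unknown on the left. But $p_s-q_s$ depends on the cumulative raise so far, so whether this absorption happens at the right moment throughout a multi-step, non-adjacent transport (not merely in the adjacent two-letter case) requires a genuine inductive bookkeeping argument about the intermediate words which your sketch postpones. This is exactly where the paper's one-shot extraction is cleaner: the chosen monomial in \eqref{eqn:quadratic many} is built so that the resulting shifts match the formula in Lemma \ref{lem:leading word} on the nose, with no leftover $+1$ to absorb.

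One small point in your favor: your worry that the tie-break ``forces one to track the explicit values \eqref{eqn:constant term 1}--\eqref{eqn:constant term 2}'' is unfounded; the final tie-break comparison at position $a$ only uses the combinatorics of \eqref{eqn:non-increasing 2} and the non-vanishing of the extreme coefficients, not their precise values.
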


\begin{proof} \emph{of Claim \ref{claim:word}:} If the word $w$ is of the form \eqref{eqn:words remark}, then we cannot prove the Claim above just by applying a single quadratic relation \eqref{eqn:quadratic}. However, iterating formula \eqref{eqn:quadratic} with $\zeta$ replaced by $\zeta'$ implies that for any permutation $\sigma \in S(n)$:
\begin{multline}
\label{eqn:quadratic many}
e_{i_1}(z_1) * e_{i_2}(z_2) * \dots * e_{i_n}(z_n) \prod^{a < b}_{\sigma^{-1}(a) > \sigma^{-1}(b)} \zeta'_{i_b i_a} \left(\frac {z_b}{z_a} \right) = \\ = e_{i_{\sigma(1)}}(z_{\sigma(1)}) * e_{i_{\sigma(2)}}(z_{\sigma(2)}) * \dots * e_{i_{\sigma(n)}}(z_{\sigma(n)}) \prod_{\sigma(a) > \sigma(b)}^{a<b} \zeta'_{i_{\sigma(b)} i_{\sigma(a)}} \left(\frac {z_{\sigma(b)}}{z_{\sigma(a)}} \right) 
\end{multline}
We will assume that the word $w = [i_1^{(d_1)} \dots i_n^{(d_n)}]$ is ``almost" non-increasing, in the sense that \eqref{eqn:non-increasing 2} holds for all $(a,b) \neq (1,n)$, but the opposite holds for $(a,b) = (1,n)$:
\begin{equation}
\label{eqn:failure}
d_1 \geq d_n + \sum_{s=1}^{n-1} \#_{i_si_n} + \begin{cases} 1 &\text{if } i_1 \leq i_n \\ 0 &\text{if } i_1 > i_n \end{cases}
\end{equation}
(otherwise, we could just take a maximal sub-word of $w$ which is almost non-increasing in the sense above, and run the subsequent argument for the sub-word). It is easy to see that $i_n \neq i_a$ for all $a \in \{2,\dots,n-1\}$, otherwise \eqref{eqn:failure} would contradict \eqref{eqn:non-increasing 2} for  $(a,b) \neq (1,n)$. Then let us apply \eqref{eqn:quadratic many} for the permutation:
\begin{equation}
\label{eqn:permutation}
\sigma = \begin{pmatrix} 1 & 2 & \dots & n \\ n & 1 & \dots & n-1 \end{pmatrix}
\end{equation}
and extract the coefficient of $\prod_{a=1}^n z_a^{- d_a - \sum^{a>s}_{\sigma^{-1}(a) < \sigma^{-1}(s)} \#_{\overrightarrow{i_si_a}} + \sum^{a<t}_{\sigma^{-1}(a) > \sigma^{-1}(t)} \#_{\overrightarrow{i_ai_t}}}$:
\begin{multline}
\label{eqn:equality of words}
e_{i_1,d_1} * e_{i_2,d_2} * \dots * e_{i_n,d_n} + \text{larger words} = \\ = (-1)^{\delta_{i_n}^{i_1}} \cdot e_{i_{n},d'_{n}} * e_{i_{1},d'_{1}} * \dots * e_{i_{n-1},d'_{n-1}} + \text{larger words} 
\end{multline}
where the phrase ``larger words" immediately following $e_w$ is shorthand for ``a linear combination of $e_y$'s with $y>w$", and we set:
\begin{align*}
&d'_{a} = d_{a} - \#_{i_ai_n} - \delta_a^1 \delta_{i_n}^{i_1}, \ \forall a < n \\
&d'_{n} = d_{n} + \sum_{s=1}^{n-1} \#_{i_si_n} + \delta_{i_n}^{i_1}
\end{align*}
(the Kronecker $\delta$ functions appear because the rational function $\zeta_{i_1i_n}$ has a linear factor in the denominator, which needs to be cleared from \eqref{eqn:quadratic many}). By \eqref{eqn:failure}, we have:
$$
\left[ i_1^{(d_1)}  i_2^{(d_2)} \dots i_n^{(d_n)} \right] \leq \left[ i_n^{(d_n')} i_1^{(d_1')} \dots i_{n-1}^{(d_{n-1}')} \right]
$$
with equality only if $i_1 = i_n$. Thus, \eqref{eqn:equality of words} allows us to write $e_w$ as a linear combination of $e_y$'s for various words $y > w$, as we needed to show. 

\end{proof} \end{proof}

\noindent The notion of standard words is defined just like in Section \ref{sec:proof}, and the next place we encounter a difference is in \eqref{eqn:ratio zero}. In the case at hand, the ratio of zeta functions therein is actually regular at $0$. Therefore, the analogue of \eqref{eqn:non-zero} tells us that for all non-increasing words $v$ and $w$ of the form \eqref{eqn:form}, we have $\langle e_v, f_w \rangle \neq 0$ only if:
$$
(k_1,\dots,k_n) = (d_{\sigma^{-1}(1)}, \dots, d_{\sigma^{-1}(n)}) + \sum^{a < b}_{\sigma^{-1}(a) > \sigma^{-1}(b)} c_{a,b} \cdot \underbrace{(0,\dots,1,\dots,-1,\dots,0)}_{1\text{ on position }a, -1 \text{ on position }b}
$$
for some $\sigma \in S(n)$ such that $i_a = j_{\sigma(a)}, \ \forall a$ and some $\{c_{a,b} \geq 0\}^{a < b}_{\sigma^{-1}(a) > \sigma^{-1}(b)}$.  \\


\noindent The preceding discussion means that the graph $G$ defined in Subsection \ref{sub:graph} should be replaced by the graph $G'$ with vertices:
$$
(d_1,\dots,d_n) \quad \text{such that} \quad d_a \leq d_{a+1} + m, \quad \forall a \in \{1,\dots,n-1\}
$$
(where $m = 2|E|$) and edges as in \eqref{eqn:edge} only for those $c_{a,b} \geq 0$ for all $a,b$. However, it is easy to see that $G$ and $G'$ are isomorphic graphs, upon the one-to-one correspondence of vertices:
$$
(d_1,\dots,d_n) \in G \quad \leadsto \quad \left(d_1 + \frac {m(n-1)}2, d_2 + \frac {m(n-3)}2, \dots, d_n - \frac {m(n-1)}2 \right) \in G'
$$
(this statement is elementary, and left as an exercise to the interested reader; it uses the fact that for any permutation $\sigma \in S(n)$ and any $a \in \{1,\dots,n\}$, the number of those $s<a$ such that $\sigma(s) > \sigma(a)$ minus the number of those $t > a$ such that $\sigma(t) < \sigma(a)$ is equal to $a-\sigma(a)$). This means that Lemma \ref{lem:combi} applies to $G'$, which we may conclude to have finite connected components. Then the contents of Subsections \ref{sub:direct sum} and \ref{sub:proof} go through as stated, thus leading to a proof of the following. \\

\begin{theorem}
\label{thm:alt main}

The algebra $\CS'$ coincides with its subalgebra $\oCS'$ generated by $\{e_{i,d}\}^{i\in I}_{d\in \BZ}$. \\

\end{theorem}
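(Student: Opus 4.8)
The plan is to recycle the entire architecture of the proof of Theorem \ref{thm:main} verbatim, keeping track only of the three points where the modified function $\zeta'_{ij}$ forces a change of bookkeeping. First, since $\zeta'_{ij}$ differs from $\zeta_{ij}$ only by a monomial factor, the pairing $\langle \cdot, \cdot \rangle : \CS' \otimes \oCS'^{\op} \to \BF$ of Proposition \ref{prop:pairing} is defined by the identical contour-integral formula (with $\zeta$ replaced by $\zeta'$), and Claims \ref{claim:main} and \ref{claim:residues} go through without any change: the residue analysis only used the \emph{positions} of the poles and zeros of the $\zeta$-factors coming from the wheel conditions, and these are unaffected by multiplying by a monomial. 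So the pairing is well-defined and non-degenerate in the first argument, exactly as before.

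Second, I would install the new notion of non-increasing word \eqref{eqn:non-increasing 2} and reprove Proposition \ref{prop:proto} in this setting --- this is the step that the excerpt already carries out, so I would simply cite it: iterating the quadratic relation \eqref{eqn:quadratic many}, one shows every $e_v$ is a linear combination of $e_w$ with $w \geq v$ and $w$ non-increasing in the sense of \eqref{eqn:non-increasing 2}, using the induction on $n$ to control the exponents and Claim \ref{claim:word} to handle the finitely-many bounded-gap words. Then Lemma \ref{lem:finite} (finitely many non-increasing words of bounded degree and bounded above) holds for the new notion by the same argument --- the inequalities \eqref{eqn:non-increasing 2} still force $d_1$ bounded below $\Rightarrow$ all $d_k$ bounded below --- and the notion of standard word is unchanged.

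Third, the combinatorial graph has to be adjusted. Because the ratio $\zeta'_{ij}(x)/\zeta'_{ji}(x^{-1})$ is now regular (and nonzero) at $0$ by \eqref{eqn:constant term 1}, the analogue of \eqref{eqn:non-zero} gives the sharper constraint $\{c_{a,b} \geq 0\}$ in place of $\{c_{a,b} \geq -m\}$, so one replaces $G$ by the graph $G'$ on vertices $(d_1,\dots,d_n)$ with $d_a \leq d_{a+1}+m$ and edges \eqref{eqn:edge 2} with all $c_{a,b}\geq 0$. The excerpt exhibits the explicit shift $(d_a) \leadsto (d_a + \tfrac{m(n+1-2a)}{2})$ identifying $G'$ with $G$ (using that $\sum$ of inversions to the left minus inversions to the right at position $a$ equals $a-\sigma(a)$), so Lemma \ref{lem:combi} applies to $G'$ and all its connected components are finite. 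With finite components in hand, Subsection \ref{sub:direct sum} goes through unchanged: one defines $\oCS'_H = \sum_{\bar w \in H} \BF\cdot e_w$, gets orthogonality $\langle \oCS'_H, \oCS'^{\op}_{H'}\rangle = 0$ for $H\neq H'$ from the graph constraint, non-degeneracy of the restricted pairing \eqref{eqn:restricted pairing} on each finite-dimensional $\oCS'_H$, and then the proof of Theorem \ref{thm:main} in Subsection \ref{sub:proof} runs word for word: given $R\in\CS'_{\bn}$, the pairing $\langle R, f_w\rangle$ vanishes for $d_1$ small, hence is nonzero for only finitely many non-increasing $w$, hence for only finitely many components $H_1,\dots,H_t$; non-degeneracy of \eqref{eqn:restricted pairing} on each $\oCS'_{H_i}$ produces $R'\in\bigoplus\oCS'_{H_i}\subset\oCS'$ pairing identically with $R$ against all $f_w$, and non-degeneracy \eqref{eqn:non-deg} forces $R = R'\in\oCS'$.

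The main obstacle --- really the only non-formal point --- is verifying that the new notion \eqref{eqn:non-increasing 2} of non-increasing word is exactly the right one, i.e. that it is simultaneously (a) weak enough that Proposition \ref{prop:proto} still produces a spanning set via the quadratic relations and the induction on $n$, and (b) strong enough that Lemma \ref{lem:finite} still yields finiteness and that the shift matches $G$ with $G'$. This is precisely the content of the reproof of Proposition \ref{prop:proto} and Claim \ref{claim:word} given in the excerpt (whose key combinatorial input is that a non-non-increasing word admits a permutation $\sigma$ strictly raising it in lex order, to be established alongside Lemma \ref{lem:leading word}); granting that, everything else is a transcription of Section \ref{sec:proof}.
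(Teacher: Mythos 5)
Your proposal matches the paper's argument essentially verbatim: carry the pairing over with $\zeta'$ in place of $\zeta$, adopt the modified non-increasing condition \eqref{eqn:non-increasing 2} and reprove Proposition \ref{prop:proto} via \eqref{eqn:quadratic many} and Claim \ref{claim:word}, replace $G$ by $G'$ with $c_{a,b}\geq 0$, and reduce Lemma \ref{lem:combi} for $G'$ to the original via the affine shift $(d_a)\mapsto(d_a+\frac{m(n+1-2a)}{2})$ before running Subsections \ref{sub:direct sum}--\ref{sub:proof} unchanged. No gaps; this is precisely the route the paper takes.
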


\subsection{} 

We will now show how to adapt the notion of leading words from Subsection \ref{sub:pbw} to the present setup; this will also serve as additional motivation for the notion of non-increasing words from \eqref{eqn:non-increasing 2}. Consider any ordered monomial:
\begin{equation}
\label{eqn:monomial}
z_{i_1 \bullet_1}^{k_1} \dots z_{i_n \bullet_n}^{k_n}
\end{equation}
(where $i_1,\dots,i_n \in I$, $k_1,\dots, k_n \in \BZ$, $\bullet_1,\dots, \bullet_n \geq 0$ are such that $\bullet_a \neq \bullet_b$ if $a \neq b$ and $i_a = i_b$). The \textbf{associated word} of the ordered monomial \eqref{eqn:monomial} is:
\begin{equation}
\label{eqn:lead word}
\left[i_1^{(d_1)} \dots i_n^{(d_n)} \right] \quad \text{where} \quad d_a = k_a - \sum_{s < a} \#_{\overrightarrow{i_ai_s}} + \sum_{a < t} \#_{\overrightarrow{i_ti_a}}, \quad \forall a \in \{1,\dots,n\}
\end{equation}
The lexicographically largest of the associated words of various orderings of a given monomial $\mu$ will be called the \textbf{leading word} of $\mu$. It is easy to see that two orderings of a given monomial give rise to the same associated word if and only if they correspond to a permutation $\sigma \in S(n)$ such that $i_a = i_{\sigma(a)}$ and $k_a = k_{\sigma(a)}$ for all $a \in \{1,\dots,n\}$, i.e. the two orderings only differ in the indices $\bullet_1,\dots,\bullet_n$. This implies that the leading word of $\mu$ only depends on $\text{Sym }\mu$. \\

\begin{lemma}
\label{lem:leading word}

Among all the associated words of a monomial \eqref{eqn:monomial}, the leading word is the only one which is non-increasing in the sense of \eqref{eqn:non-increasing 2}. \\

\end{lemma}

\begin{proof} Let us first show that the leading word is non-increasing. Assume it arises from an ordering as in \eqref{eqn:monomial}. For any $1 \leq a < b \leq n$, consider the permutation:
\begin{equation}
\label{eqn:permutation 2}
\sigma = \begin{pmatrix} 1 & \dots & a-1 & a & a+1 & \dots & b & b+1 & \dots & n \\ 1 & \dots & a-1 & b & a & \dots & b-1 & b+1 & \dots & n \end{pmatrix}
\end{equation}
The very definition of the leading word implies that:
\begin{equation}
\label{eqn:ineq leading word}
\left[i_1^{(d_1)} \dots i_n^{(d_n)}\right] \geq \left[i_{\sigma(1)}^{(d_{\sigma(1)}')} \dots i_{\sigma(n)}^{(d'_{\sigma(n)})}\right]
\end{equation}
where for any $c \in \{1,\dots,n\}$, we have:
\begin{align}
&d_c = k_c - \sum_{s < c} \#_{\overrightarrow{i_ci_s}} + \sum_{c < t} \#_{\overrightarrow{i_ti_c}} \label{eqn:d8} \\
&d'_{\sigma(c)} = k_{\sigma(c)} - \sum_{s < c} \#_{\overrightarrow{i_{\sigma(c)}i_{\sigma(s)}}} + \sum_{c < t} \#_{\overrightarrow{i_{\sigma(t)}i_{\sigma(c)}}} \label{eqn:d9}
\end{align}
Eliminating the $k$'s from the formulas above implies (recall that $\#_{ij} = \#_{\oij}+\#_{\oji}$):
\begin{align*}
d'_{\sigma(c)} - d_{\sigma(c)} &= \sum_{c < t} \#_{\overrightarrow{i_{\sigma(t)}i_{\sigma(c)}}} - \sum_{\sigma(c) < \sigma(t)} \#_{\overrightarrow{i_{\sigma(t)}i_{\sigma(c)}}} - \sum_{s < c}  \#_{\overrightarrow{i_{\sigma(c)}i_{\sigma(s)}}} + \sum_{\sigma(s) < \sigma(c)} \#_{\overrightarrow{i_{\sigma(c)}i_{\sigma(s)}}} \\
&= \sum^{t>c}_{\sigma(t) < \sigma(c)} \#_{i_{\sigma(t)}i_{\sigma(c)}} - \sum^{s<c}_{\sigma(s) > \sigma(c)} \#_{i_{\sigma(c)}i_{\sigma(s)}}
\end{align*}
For $\sigma$ as in \eqref{eqn:permutation 2}, the formula above implies $d'_1 = d_1, \dots, d'_{a-1} = d_{a-1}$ and: 
\begin{equation}
\label{eqn:eq leading word}
d'_b - d_b = \sum_{s=a}^{b-1} \#_{i_si_b}
\end{equation}
The only way  \eqref{eqn:ineq leading word} can be satisfied is if $d_b' = d_{\sigma(a)}' > d_a$ or if $d_b' = d_{\sigma(a)}' = d_a$ and $i_b = i_{\sigma(a)} \leq i_a$. By \eqref{eqn:eq leading word}, this is precisely equivalent to condition \eqref{eqn:non-increasing 2}. \\

\noindent To show the ``only" part of Lemma \ref{lem:leading word}, we must show that two orderings of a given monomial \eqref{eqn:monomial} cannot give rise to distinct non-increasing associated words. Thus, assume for the purpose of contradiction that the aforementioned associated words:
\begin{equation}
\label{eqn:two leading words}
\left[i_1^{(d_1)} \dots i_n^{(d_n)}\right] > \left[i_{\sigma(1)}^{(d_{\sigma(1)}')} \dots i_{\sigma(n)}^{(d'_{\sigma(n)})}\right]
\end{equation}
are both non-increasing. Choose $a \in \{1,\dots,n\}$ such that the first $a-1$ letters of the words above match, but the $a$-th letter of the word on the left is greater than the $a$-th letter of the word on the right. Thus, for all $c <a$ we have $i_c = i_{\sigma(c)}$ and $d_c = d'_{\sigma(c)}$; a straightforward application of \eqref{eqn:d8} and \eqref{eqn:d9} implies that also $k_c = k_{\sigma(c)}$. Since reordering variables of a monomial \eqref{eqn:monomial} with the same $i_c$ and $k_c$ does not change the associated word of the monomial, we may assume for simplicity that $\sigma(c) = c$ for all $c < a$. However, our hypothesis on the number $a$ shows that:
\begin{equation}
\label{eqn:ineq 1}
d_a \leq d'_{\sigma(a)}
\end{equation}
with equality only if $i_a > i_{\sigma(a)}$. Let $b = \sigma^{-1}(a) \Leftrightarrow \sigma(b) = a$. Because $\sigma(c) = c$ for all $c<a$ (and because \eqref{eqn:ineq 1} precludes $a = b$), we must have $a < b$. Property \eqref{eqn:non-increasing 2} applied to the word in the right-hand side of \eqref{eqn:two leading words} reads:
\begin{equation}
\label{eqn:ineq 2}
d'_{\sigma(a)} \leq d'_{a} + \sum_{s=a}^{b-1} \#_{i_{\sigma(s)}i_{a}}  
\end{equation}
with equality only if $i_{\sigma(a)} \geq i_{a}$. Combining the two inequalities above implies:
\begin{equation}
\label{eqn:ineq 3}
d_a < d_a' + \sum_{s=a}^{b-1} \#_{i_{\sigma(s)}i_{a}}  
\end{equation}
However, the formula immediately preceding \eqref{eqn:eq leading word} (for $c \leadsto b = \sigma^{-1}(a)$) reads:
$$
d'_{a} - d_{a} = \sum^{t>b}_{\sigma(t) < a} \#_{i_{\sigma(t)}i_{a}} - \sum^{s<b}_{\sigma(s) > a} \#_{i_{a}i_{\sigma(s)}}
$$
which combined with \eqref{eqn:ineq 3} implies:
$$
 -\sum_{s=a}^{b-1} \#_{i_{\sigma(s)}i_{a}}  < \sum^{t>b}_{\sigma(t) < a} \#_{i_{\sigma(t)}i_{a}} - \sum^{s<b}_{\sigma(s) > a} \#_{i_{a}i_{\sigma(s)}} 
$$
Because $\sigma(c) = c$ for all $c < a$, the two sides of the inequality above are actually equal to each other (the first sum in the right-hand side is vacuous, and the second sum in the right-hand side runs over the same indexing set as the sum in the left-hand side), thus giving us the required contradiction.

\end{proof}

\noindent The leading word of an element $R \in \CS$ is defined as the lexicographically largest of the leading words of all of its constituent monomials. With this in mind, we leave the following analogue of \eqref{eqn:standard eq} as an exercise to the interested reader:
\begin{equation}
\label{eqn:standard eq 2}
\Big \langle R, f_w \Big \rangle \text{ is } \begin{cases} \neq 0 &\text{if }w = v \\ = 0 &\text{if }w > v \end{cases}
\end{equation}
where $v$ denotes the leading word of $R$. Indeed, we may compute the LHS by applying formula \eqref{eqn:pairing formula} with $R$ replaced by the symmetrization of the monomial \eqref{eqn:monomial} and $\zeta_{ij}(x) \in \BF[[x]]^\times$ replaced by $\zeta'_{ij}(x) \in x^{-\#_{\oji}}\BF[[x]]^\times$. That the resulting expression equals the RHS of \eqref{eqn:standard eq 2} is a straightforward consequence of Lemma \ref{lem:leading word}. \\ 

\subsection{} 
\label{sub:cop plus}

Because of \eqref{eqn:constant term 1}--\eqref{eqn:constant term 2}, we may make $\CS'$ into a Hopf algebra (various incarnations of this process were carried out in numerous papers, most notable for our situation being \cite{Shuf, SV, YZ}). As is common in the theory of quantum loop groups, we must first extend and double the algebra $\CS'$, and we will now recall the details. \\

\begin{definition}
\label{def:extended shuffle}

Consider the extended algebra:
\begin{equation}
\label{eqn:extended shuffle}
{\CS'}^{\geq} = \CS' \bigotimes_{\BF} \BF \left[h^+_{i,d}\right]_{i \in I, d \geq 0}
\end{equation}
where the multiplication is governed by the following relation for all $i,j \in I$:
\begin{equation}
\label{eqn:rel ext shuffle}
R(\dots ,z_{ia}, \dots) h^+_j(w) = h^+_j(w) R(\dots ,z_{ia}, \dots) \prod^{i\in I}_{1\leq a \leq n_i} \frac {\zeta'_{ij} \left( \frac {z_{ia}}{w} \right)}{\zeta'_{ji} \left( \frac {w}{z_{ia}} \right)}
\end{equation}
where the RHS is defined by expanding as a power series in $|z_{ia}| \ll |w|$, and:
$$
h_j^+(w) = \sum_{d = 0}^{\infty} \frac {h_{j,d}^+}{w^d}
$$

\end{definition}

\medskip


\noindent The following is a straightforward result, which we leave as an exercise to the interested reader (cf. \cite[eqn. (4.13), (4.14)]{Thesis}; alternatively, the proof presented in \cite{Shuf} for the particular case of the Jordan quiver carries through almost word-for-word): \\


\begin{proposition}
\label{prop:coproduct big}

The assignments $\Delta(h^+_i(z)) = h^+_i(z) \otimes h^+_i(z)$ and:
\begin{equation}
\label{eqn:cop big shuffle}
\Delta (R(\dots,z_{i1},\dots,z_{in_i},\dots)) = 
\end{equation}
$$
= \sum_{\{k_i \in \{0,\dots,n_i\}\}_{i \in I}} \frac {\left[ \prod^{j \in I}_{k_j < b \leq n_j} h^+_j(z_{jb}) \right] \cdot R(\dots, z_{i1},\dots , z_{ik_i} \otimes z_{i,k_i+1}, \dots, z_{in_i},\dots)}{\prod^{i \in I}_{1\leq a \leq k_i} \prod^{j \in I}_{k_j < b \leq n_j} \zeta'_{ji} \left( \frac {z_{jb}}{z_{ia}} \right)}
$$
give rise to a (topological) coproduct on the algebra ${\CS'}^\geq$. To make sense of the right hand side of \eqref{eqn:cop big shuffle}, we expand the denominator as a power series in the range $|z_{ia}| \ll |z_{jb}|$, and place all the powers of $z_{ia}$ to the left of the $\otimes$ sign and all the powers of $z_{jb}$ to the right of the $\otimes$ sign (for all $i,j \in I$, $1 \leq a \leq k_i$, $k_j < b \leq n_j$). \\

\end{proposition}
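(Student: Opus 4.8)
The statement is that the assignments $\Delta(h^+_i(z)) = h^+_i(z) \otimes h^+_i(z)$ and \eqref{eqn:cop big shuffle} define a coassociative topological coproduct on ${\CS'}^\geq$, i.e.\ a map ${\CS'}^\geq \to {\CS'}^\geq \widehat\otimes {\CS'}^\geq$ compatible with the algebra structure governed by \eqref{eqn:rel ext shuffle}. There are three things to check: (i) the formula \eqref{eqn:cop big shuffle} produces a well-defined element of the completed tensor square (the denominators involving $\zeta'$ must be expandable as honest power series, and the output must be a finite sum of terms each of which is a shuffle element times an $h^+$-monomial), (ii) $\Delta$ is an algebra map, i.e.\ $\Delta(R * R') = \Delta(R) \cdot \Delta(R')$ and $\Delta$ respects \eqref{eqn:rel ext shuffle}, and (iii) $\Delta$ is coassociative. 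The cleanest route, and the one I would take, is to avoid reproving everything from scratch by invoking the model computation in \cite{Shuf}: in the Jordan-quiver case the identical formula was verified there, and every manipulation in that verification is ``local'' in the pairs of variables $(z_{ia}, z_{jb})$ and depends only on the two-variable ratio $\zeta'_{ij}(x)/\zeta'_{ji}(x^{-1})$ and its behaviour at $0$ and $\infty$. So the strategy is to isolate exactly which properties of $\zeta'$ are used and observe they all hold in the present generality.

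First I would record the two structural facts about $\zeta'$ that drive everything: the ratio $\zeta'_{ij}(x)/\zeta'_{ji}(x^{-1})$ is a rational function which, by \eqref{eqn:constant term 1}--\eqref{eqn:constant term 2}, is regular and nonzero at both $x=0$ and $x=\infty$ (unlike $\zeta_{ij}$), so it expands as a power series in $x$ with invertible constant term and also as a power series in $x^{-1}$; and $\zeta'_{ij}$ itself is a Laurent polynomial times $(1-xq^{-1})/(1-x)$ raised to $\delta^i_j$. The first fact is what makes the denominators $\prod \zeta'_{ji}(z_{jb}/z_{ia})$ in \eqref{eqn:cop big shuffle} expandable in the region $|z_{ia}| \ll |z_{jb}|$ and what makes \eqref{eqn:rel ext shuffle} consistent; the possible simple pole at $z_{ia} = z_{jb}$ when $i=j$ disappears after the symmetrization built into the shuffle product, exactly as noted after \eqref{eqn:def zeta}. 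Then I would check (i): each summand of \eqref{eqn:cop big shuffle} indexed by $(k_i)_{i\in I}$ is, after expanding the denominator and the $h^+_j(z_{jb})$-factors, a (possibly infinite in each $h^+$-degree but bounded-below) series whose $z_{ia}$-part (to the left of $\otimes$) is a Laurent polynomial — because $R$ is — and whose $z_{jb}$-part (to the right) is $R$-dependent times the $h^+$'s; finiteness of the outer sum over $(k_i)$ is clear since $0\le k_i\le n_i$. This shows $\Delta R$ lands in ${\CS'}^\geq \widehat\otimes {\CS'}^\geq$.

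Next, (ii): that $\Delta$ is multiplicative. For the relation $\Delta(R*R') = \Delta(R)\Delta(R')$ one expands both sides using \eqref{eqn:shuf prod} (with $\zeta'$) and \eqref{eqn:cop big shuffle}, and matches terms indexed by how the $n_i+n_i'$ variables get split between the two tensor factors; the key combinatorial identity is that splitting-then-shuffling equals shuffling-then-splitting, and the $\zeta'$-factors bookkeep correctly because of the cocycle-type identity $\zeta'_{ij}$ is ``multiplicative in variables.'' For compatibility with \eqref{eqn:rel ext shuffle} one checks that applying $\Delta$ to both sides of \eqref{eqn:rel ext shuffle} gives the same thing, using $\Delta(h^+_j(w)) = h^+_j(w)\otimes h^+_j(w)$ and the already-established $\Delta(R*\text{stuff})$. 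This is bookkeeping of the same flavour as in \cite{Shuf}; I would do the variable-matching carefully but not belabour it. Finally, (iii) coassociativity: compute $(\Delta\otimes 1)\Delta(R)$ and $(1\otimes\Delta)\Delta(R)$ and observe both equal the ``triple-split'' formula $\sum_{k_i'\le k_i''} (\text{$h^+$-monomial on 3rd block})\cdot(\text{$h^+$-monomial on 2nd-and-3rd block contributions})\cdot R(z\text{-1st}\otimes z\text{-2nd}\otimes z\text{-3rd})$ divided by the appropriate product of $\zeta'$-factors over all cross-pairs — the associativity of the ``$h^+$ moves past $R$'' rule \eqref{eqn:rel ext shuffle} makes the two nested expansions agree.

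The main obstacle, and where I would spend the real effort, is (iii) coassociativity, specifically keeping the expansion regions consistent: in $(\Delta\otimes 1)\Delta$ one first expands denominators in $|z^{(1)}|\ll|z^{(2,3)}|$ and then $|z^{(1)}|\ll|z^{(2)}|$, while in $(1\otimes\Delta)\Delta$ one expands in $|z^{(1,2)}|\ll|z^{(3)}|$ then $|z^{(1)}|\ll|z^{(2)}|$; these must be shown to yield the same formal series, which comes down to the fact that the full cross-term product $\prod \zeta'_{ji}(z^{(\text{later})}/z^{(\text{earlier})})$ is being expanded in the total region $|z^{(1)}|\ll|z^{(2)}|\ll|z^{(3)}|$ either way, together with the fact that the ``intermediate'' $h^+_j(z^{(2)}_{jb})$ factors produced at the first step transform correctly when they themselves get coproducted/moved. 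Because $\zeta'_{ij}(x)/\zeta'_{ji}(x^{-1})$ is regular at $0$, all these expansions are unambiguous, so the equality holds; but it is precisely here that the improvement from $\zeta$ to $\zeta'$ is essential, and I would state that dependence explicitly. All told, since every step is a variable-indexed reprise of the Jordan-quiver computation in \cite{Shuf} and the only new input is the regularity \eqref{eqn:constant term 1}--\eqref{eqn:constant term 2}, the proposition follows, and it is reasonable (as the paper does) to leave the routine matchings to the reader while flagging coassociativity and the role of $\zeta'$'s regularity as the substantive points.
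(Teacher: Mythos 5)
Your plan is exactly the paper's own: the paper disposes of this proposition with a single sentence, declaring it a straightforward exercise and noting that the Jordan-quiver computation of \cite{Shuf} carries through almost word-for-word, which is precisely the route you outline. Your explicit isolation of the regularity of $\zeta'_{ij}(x)/\zeta'_{ji}(x^{-1})$ at $0$ and $\infty$ (from \eqref{eqn:constant term 1}--\eqref{eqn:constant term 2}) as the property that makes the expansions in \eqref{eqn:cop big shuffle} and \eqref{eqn:rel ext shuffle} well-defined is a useful clarification that the paper leaves implicit in its reference to \cite{Shuf}.
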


\noindent The coproduct \eqref{eqn:cop big shuffle} is multiplicative, hence makes ${\CS'}^{\geq}$ into a bialgebra (the counit annihilates all $R \in \CS'_{\bn}$ for $\bn \neq 0$, and all $h^+_{i,d}$ with $d > 0$). It is straightforward to write the antipode that makes \eqref{eqn:extended shuffle} into a Hopf algebra, but we will not need it. \\


\subsection{} 
\label{sub:cop minus}

By analogy with the previous Subsection, we let:
$$
{\CS'}^{\leq} = {\CS'}^{\op} \bigotimes_{\BF} \BF \left[h^-_{i,d} \right]_{i \in I, d \geq 0}
$$
where the multiplication is governed by the following relation for all $i,j \in I$:
$$
R(\dots ,z_{ia}, \dots) h^-_j(w) = h^-_j(w) R(\dots ,z_{ia}, \dots) \prod^{i\in I}_{1\leq a \leq n_i} \frac {\zeta'_{ji} \left( \frac {w}{z_{ia}} \right)}{\zeta_{ij}' \left( \frac {z_{ia}}{w} \right)}
$$
where the RHS is defined by expanding as a power series in $|z_{ia}| \gg |w|$, and:
$$
h_j^-(w) = \sum_{d = 0}^{\infty} h_{j,d}^- w^d
$$
The assignment $\Delta(h^-_i(z)) = h^-_i(z) \otimes h^-_i(z)$ and:
$$
\Delta (R(\dots,z_{i1},\dots,z_{in_i},\dots)) = 
$$
$$
= \sum_{\{k_i \in \{0,\dots,n_i\}\}_{i \in I}} \frac {R(\dots, z_{i1},\dots , z_{ik_i} \otimes z_{i,k_i+1}, \dots, z_{in_i},\dots) \cdot \left[ \prod^{i \in I}_{1 \leq a \leq k_i} h^-_i(z_{ia}) \right]}{\prod^{i \in I}_{1\leq a \leq k_i} \prod^{i \in I}_{k_j < b \leq n_j} \zeta'_{ij} \left( \frac {z_{ia}}{z_{jb}} \right)}
$$
give rise to a (topological) coproduct on the algebra ${\CS'}^\leq$. To make sense of the right hand side of the expression above, we expand the denominator as a power series in the range $|z_{ia}| \ll |z_{jb}|$ for all $i,j \in I$ and all $1 \leq a \leq k_i$ and $k_j < b \leq n_j$. \\

\subsection{} By definition, a bialgebra pairing:
\begin{equation}
\label{eqn:ext pairing}
{\CS'}^{\geq} \otimes {\CS'}^{\leq} \xrightarrow{\langle \cdot , \cdot \rangle} \BF
\end{equation}
is a $\BF$-linear pairing which satisfies the following properties:
\begin{align}
&\Big \langle a,b_1b_2 \Big \rangle = \Big \langle \Delta(a), b_1 \otimes b_2 \Big \rangle \label{eqn:bialg 1} \\
&\Big \langle a_1a_2,b \Big \rangle = \Big \langle a_1 \otimes a_2, \Delta^{\op}(b) \Big \rangle \label{eqn:bialg 2}
\end{align}
for all $a,a_1,a_2 \in {\CS'}^\geq$ and $b,b_1,b_2 \in {\CS'}^{\leq}$. In other words, the pairing is such that the dual of the product is the (opposite) coproduct, and vice versa. There exists also the stronger notion of Hopf pairing, which additionally satisfies the property:
\begin{equation}
\label{eqn:hopf alg}
\Big \langle S(a), S(b) \Big \rangle = \Big \langle a,b \Big \rangle
\end{equation}
for all $a \in {\CS'}^\geq$ and $b \in {\CS'}^{\leq}$ (we will not need \eqref{eqn:hopf alg} in the present paper). \\

\begin{proposition}
\label{prop:bialgebra pairing}

There is a unique bialgebra pairing \eqref{eqn:ext pairing} which satisfies:
\begin{equation}
\label{eqn:pair h's}
\Big \langle h_i^+(z), h_j^-(w) \Big \rangle = \frac {\zeta_{ij}' \left(\frac zw \right)}{\zeta_{ji}' \left(\frac wz \right)} 
\end{equation}
(the RHS is expanded as $|z| \gg |w|$) and whose restriction to:
\begin{equation}
\label{eqn:pairing prime}
\CS' \otimes {\CS'}^{\emph{op}} \xrightarrow{\langle \cdot , \cdot \rangle} \BF
\end{equation}
is given by formula \eqref{eqn:pairing formula} with $\zeta'_{ij}$ instead of $\zeta_{ij}$. \\

\end{proposition}

\noindent The Proposition above is proved with minor modifications in \cite[Exercise IV.2.]{Thesis}; alternatively, it is proved almost word-for-word as its particular case when $Q$ is the Jordan quiver, which the interested reader may find in \cite{Shuf}. Let us note that formula \eqref{eqn:pairing formula} (respectively \eqref{eqn:symmetric pairing}) with $\zeta_{ij}$ replaced by $\zeta_{ij}'$ manifestly shows that the pairing \eqref{eqn:pairing} satisfies property \eqref{eqn:bialg 1} when $a \in \CS'$ and $b_1,b_2 \in {\CS'}^{\op}$ (respectively property \eqref{eqn:bialg 2} when $a_1,a_2 \in \CS'$ and $b \in {\CS'}^{\op}$). \\

\subsection{}
\label{sub:r-matrix}

Given the bialgebra pairing \eqref{eqn:ext pairing}, we may define the Drinfeld double as:
$$
\DD \CS' = {\CS'}^{\geq} \otimes {\CS'}^{\leq}
$$
where the multiplication in the algebra above is governed by the relation:
$$
a_1 b_1 \Big \langle a_2,b_2 \Big \rangle = b_2 a_2 \Big \langle a_1, b_1 \Big \rangle
$$
for any $a \in {\CS'}^{\geq} \otimes 1 \subset \DD \CS'$ and any $b \in 1 \otimes {\CS'}^{\leq} \subset \DD \CS'$. In the formula above, we use Sweedler notation for the coproduct: $\Delta(a) = a_1 \otimes a_2$ and $\Delta(b) = b_1 \otimes b_2$, with an implied summation sign. Drinfeld doubles such as $\DD \CS'$ are endowed with an important distinguished element:
$$
\CR \in \DD \CS' \ \widehat{\otimes} \ \DD \CS'
$$
called a universal $R$-matrix (the completion is necessary because our coproduct is topological). As is well-known in the theory of quantum groups, we have:\footnote{For a survey of the formula \eqref{eqn:univ r} in the particular case of the Jordan quiver, we refer the reader to \cite{R-matrix}, where we recall the standard difficulties in properly defining the product in \eqref{eqn:univ r}.}
\begin{equation}
\label{eqn:univ r}
\CR = \Big[\text{an expression involving the }h^{\pm}_{i,d} \Big] \cdot \CR'
\end{equation}
where $\CR'$ is the canonical tensor of the pairing \eqref{eqn:pairing prime}:
\begin{equation}
\label{eqn:univ r'}
\CR' = \sum_{w \text{ standard}} e^w \otimes f_w
\end{equation}
Recall from Theorem \ref{thm:main} and \eqref{eqn:direct sum 1} that $\CS'$ and ${\CS'}^{\op}$ decompose as direct sums of mutually orthogonal finite-dimensional pieces indexed by the connected components $H \subset G$ of Subsection \ref{sub:direct sum}. Therefore, $\CR'$ is a sum of finite contributions indexed by the various $H \subset G$, and these contributions can be computed explicitly (albeit not in a very useful way, in the author's opinion) using formula \eqref{eqn:pair e and f}. \\ 

\section{The case of non-generic parameters}
\label{sec:tori}

\medskip

\subsection{} 
\label{sub:works}

In the present Section, we will replace $\BF$ by an arbitrary field $\BK$ endowed with non-zero elements $q$ and $\{t_e\}_{e \in E}$. Since there might be non-trivial algebraic relations between the elements $q, t_e$, the goal is now to define a shuffle algebra:
\begin{equation}
\label{eqn:restricted shuffle}
{_\BK\CS} \subset {_\BK\CV} := \bigoplus_{\bn = (n_i)_{i \in I} \in \nn} \BK\left[z_{i1}^{\pm 1}, \dots, z_{in_i}^{\pm 1} \right]^\sym_{i \in I}
\end{equation}
to which the analogue of Theorem \ref{thm:main} applies. We make the following choice: \\

\begin{definition}
\label{def:special shuffle}

Let the vector subspace ${_\BK\CS}$ of \eqref{eqn:restricted shuffle} consist precisely of those symmetric Laurent polynomials $R(\dots, z_{ia}, \dots)$ such that:
\begin{equation}
\label{eqn:divisible 2}
\prod_{e = \oij} \left( z_{ia} - \frac {qz_{j1}}{t_e} \right) \prod_{e = \oji} (z_{ia} - t_{e} z_{j1}) \quad \text{divides} \quad R \Big|_{z_{j2} = q z_{j1}}
\end{equation}
for all $i,j \in I$ and all $a$ such that $(i,a) \notin \{(j,1), (j,2)\}$. \\

\end{definition}

\noindent If the scalars $\left \{ \frac q{t_e}, t_{e'} \right\}_{e = \oij, e' = \oji}$ are all distinct, \eqref{eqn:divisible 2} is equivalent to \eqref{eqn:wheel}. \\

\begin{proposition}
\label{prop:restricted subalg}

$_\BK\CS$ is a subalgebra of $_\BK\CV$. \\

\end{proposition}

\begin{proof} We need to show that if symmetric Laurent polynomials $R$ and $R'$ (of degrees $\bn$ and $\bn'$, respectively) satisfy \eqref{eqn:divisible 2}, then so does $R*R'$. Formula \eqref{eqn:shuf prod} reads:
\begin{equation}
\label{eqn:restricted shuf prod}
(R*R')(\dots, z_{i1}, \dots, z_{i,n_i+n'_i}, \dots) = \sum_{\{\sigma_i \in S(n_i+n_i')\}_{i \in I}} 
\end{equation}
$$
\frac {R(z_{i,\sigma_i(1)}, \dots, z_{i,\sigma_i(n_i)})  R'(z_{i,\sigma_i(n_i+1)}, \dots, z_{i,\sigma_i(n_i+n_i')})}{\prod_{i \in I} n_i! \prod_{i \in I} n_i'!} \mathop{\prod^{i,i' \in I}_{1\leq a \leq n_i}}_{n_{i'} < a' \leq n_{i'}+n'_{i'}} \zeta_{ii'} \left( \frac {z_{i,\sigma_i(a)}}{z_{i',\sigma_{i'}(a')}} \right)
$$
Let us fix $i,j \in I$ and specialize $z_{j2} = q z_{j1}$ in the formula above. We will show that for any collection of permutations $\{\sigma_i\}_{i\in I}$, the second line of \eqref{eqn:restricted shuf prod} is divisible by:
\begin{equation}
\label{eqn:the factor}
\prod_{e = \oij} \left( z_{ia} - \frac {qz_{j1}}{t_e} \right) \prod_{e = \oji} (z_{ia} - t_{e} z_{j1}) 
\end{equation}
for any $a \in \{1,\dots,n_i+n_i'\}$ (and $a \neq 1,2$ if $i = j$), according to the following cases: \\
 
\begin{itemize}[leftmargin=*]

\item if $\sigma_j^{-1}(1) , \sigma_j^{-1}(2) \leq n_j$, then the factor \eqref{eqn:the factor} for $\sigma_i^{-1}(a) \leq n_i$ arises because $R$ satisfies \eqref{eqn:divisible 2}, and for $\sigma^{-1}_i(a) > n_i$ because $\zeta_{ji}(z_{j1}/z_{ia})$ appears in \eqref{eqn:restricted shuf prod}; \\

\item if $\sigma_j^{-1}(1) , \sigma_j^{-1}(2) > n_j$, then the factor \eqref{eqn:the factor} for $\sigma_i^{-1}(a) > n_i$ arises because $R'$ satisfies \eqref{eqn:divisible 2}, and for $\sigma^{-1}_i(a) \leq n_i$ because $\zeta_{ij}(z_{ia}/z_{j2})$ appears in \eqref{eqn:restricted shuf prod}; \\

\item if $\sigma_j^{-1}(1) \leq n_j < \sigma_j^{-1}(2)$, then the factor \eqref{eqn:the factor} for $\sigma_i^{-1}(a) > n_i$ arises because $\zeta_{ji}(z_{j1}/z_{ia})$ appears in \eqref{eqn:restricted shuf prod}, and for $\sigma^{-1}_i(a) \leq n_i$ because $\zeta_{ij}(z_{ia}/z_{j2})$ appears. \\

\end{itemize}

\noindent The only case not covered by the preceding analysis is when $\sigma_j^{-1}(1) > n_j \geq \sigma_j^{-1}(2)$, when the appearance of the factor $\zeta_{jj}(z_{j2}/z_{j1}) = 0$ implies that the second line of \eqref{eqn:restricted shuf prod} vanishes altogether. We thus conclude that all summands in the right-hand side of \eqref{eqn:restricted shuf prod} are divisible by \eqref{eqn:the factor} for all $a \in \{1,\dots,n_i+n_i'\}$, as we needed to show.

\end{proof}

\subsection{}

As one goes through Section \ref{sec:proof}, one notes that the only place where we invoked the wheel conditions that determine the subalgebra $\CS \subset \CV$ was in the proof of Proposition \ref{prop:pairing}. Then let us henceforth make: \\

\noindent \textbf{Assumption \begin{otherlanguage*}{russian}Ъ\end{otherlanguage*}}: there exists a field homomorphism:
$$
\rho : \BK \rightarrow \BC
$$
for which $|\rho(q)| < |\rho(t_e)| < 1$ for all $e \in E$. \\

\begin{proposition}
\label{prop:restricted pairing}

Under Assumption \begin{otherlanguage*}{russian}Ъ\end{otherlanguage*}, Proposition \ref{prop:pairing} holds with:
$$
\CS, \oCS, \BF \quad \text{replaced by} \quad {_\BK\CS}, {_\BK\oCS}, \BK
$$
where ${_\BK\oCS} \subseteq {_\BK\CS}$ denotes the subalgebra generated by $\{z_{i1}^d\}_{i \in I, d \in \BZ}$. \\

\end{proposition}

\begin{proof} The only use of the wheel conditions in the proof of Proposition \ref{prop:pairing} was to ensure that for any $R \in {_{\BK}\CS}$ and any $i,j \in I$, $k \geq 1$, the rational function:
$$
\frac {R(\dots,z_{ia},\dots) |_{z_{jk} = x q^{k-1}, \dots, z_{j2} = xq, z_{j1} = x}}{\prod_{(i,a) \notin \{(j,1),\dots,(j,k)\}} \zeta_{ij} \left( \frac {z_{ia}}{x} \right) \dots \zeta_{ij} \left( \frac {z_{ia}}{xq^{k-1}} \right)}
$$
has no poles of the form $z_{ia} = x c$ with $|c| < 1$, other than $z_{ia} = xq^k$ (and the latter only if $i = j$), for all $j \in I$ and all $k \in \{1,\dots,n_j\}$. Looking back to the analysis in the three bullets in the proof of Claim \ref{claim:residues}, this amounts to ensuring that:
\begin{equation}
\label{eqn:divisible 1}
\prod_{s=1}^{k-1} \left[  \prod_{e = \oij} \left( z_{ia} - \frac {q^sx}{t_e} \right) \prod_{e' = \oji} (z_{ia} - q^{s-1}t_{e'} x) \right] \ \ \text{divides} \ \ R \Big|_{z_{jk} = xq^{k-1}, \dots,z_{j1} = x}
\end{equation}
for all $i,j \in I$ and all $1 \leq a \leq n_i$, $1 \leq k \leq n_j$ with  $(i,a) \notin \{(j,1),\dots,(j,k)\}$. However, because of  Assumption \begin{otherlanguage*}{russian}Ъ\end{otherlanguage*}, the multi-sets of scalars: 
$$
\left \{\frac {q^s}{t_e}, q^{s-1}t_{e} \right\}_{e \in E}
$$
are disjoint for different integers $s$. Thus, property \eqref{eqn:divisible 1} boils down to \eqref{eqn:divisible 2}. 

\end{proof}

\subsection{} As we explained, since the analogue of Proposition \ref{prop:pairing} holds, all the remaining contents of Section \ref{sec:proof} apply for $\CS$ replaced by $_\BK\CS$. We thus conclude the following. \\

\begin{corollary} 
\label{cor:gen}

Under Assumption \begin{otherlanguage*}{russian}Ъ\end{otherlanguage*}, we have ${_\BK\oCS} = {_\BK\CS}$. \\

\end{corollary}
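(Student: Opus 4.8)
The plan is to rerun the entire argument of Section \ref{sec:proof} verbatim, over the field ${_\rho\BF}$, with $\CS$ replaced by ${_\rho\CS}$ (as defined in \eqref{eqn:restricted wheel}) and $\oCS$ by ${_\rho\oCS}$. The key observation, already flagged in Subsection \ref{sub:works}, is that the wheel conditions enter Section \ref{sec:proof} in exactly one place: the proof of Proposition \ref{prop:pairing}, and specifically Claim \ref{claim:residues}. There, as the contour of $z_{m-1}$ is moved past the fused cluster $\{z_{\alpha_s}q^j\}$, one needs every candidate pole $z_{m-1} = z_{\alpha_s}c$ with $|c|<1$ to cancel, except $z_{m-1} = z_{\alpha_s}q^{|A_s|}$. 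In the full-torus setting of Section \ref{sec:proof}, the poles of the form $z_{m-1} = z_{\alpha_s}q^x t_e$ and $z_{m-1} = z_{\alpha_s}q^x/t_{e'}$ were killed by the $3$-variable wheel conditions; in the present setting they are killed instead by the divisibility condition on the second line of \eqref{eqn:restricted wheel}, which --- by the very way ${_\rho\CS}$ was defined in Subsection \ref{sub:works} --- is precisely equivalent to the statement that the iterated residue $R|_{z_{jk}=xq^{k-1},\dots,z_{j1}=x}$, divided by the relevant product of $\zeta$-factors, has no pole $z_{ia}=xc$ with $|c|<1$ other than $z_{ia}=xq^k$.

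First I would invoke Assumption \begin{otherlanguage*}{russian}Ъ\end{otherlanguage*} to specialize $q,\{t_e\}$ to complex numbers satisfying \eqref{eqn:inequality parameters}, so that the residue bookkeeping of Claim \ref{claim:residues} (and the accompanying footnote ruling out the $x=-1$ and $x=0$ poles) makes literal sense. With this, the analogues of Claims \ref{claim:main} and \ref{claim:residues} --- in which the non-symmetric wheel conditions on the numerator $p$ are replaced by the corresponding many-variable divisibility --- go through unchanged, establishing that the pairing \eqref{eqn:pairing} between ${_\rho\CS}$ and ${_\rho\oCS}^{\op}$ is well-defined; its non-degeneracy in the first argument is again the tautology that a rational function whose $|z_1|\ll\dots\ll|z_n|$ expansion vanishes identically is zero. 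Hence Proposition \ref{prop:pairing} holds in the present setting.

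All subsequent ingredients of Section \ref{sec:proof} are insensitive to shrinking the torus. Propositions \ref{prop:iso} and \ref{prop:proto} (via the quadratic relations \eqref{eqn:quadratic}--\eqref{eqn:quadratic 4}), Lemma \ref{lem:finite}, the definition of standard words, the pairing computation \eqref{eqn:pair e and f}--\eqref{eqn:non-zero} (which uses only \eqref{eqn:ratio zero}, depending just on the explicit shape of $\zeta_{ij}$), Lemma \ref{lem:combi} on the graph $G$, and Proposition \ref{prop:direct} all depend solely on the shuffle formula \eqref{eqn:shuf prod}, on $\zeta_{ij}$, and on the well-definedness and non-degeneracy of the pairing --- none of which mention ${_\rho}T$. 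Feeding these into the proof of Theorem \ref{thm:main} in Subsection \ref{sub:proof} word-for-word yields ${_\rho\CS}={_\rho\oCS}$, which is the Corollary. The only genuinely non-routine point is the one isolated in the first paragraph: that the many-variable divisibility in \eqref{eqn:restricted wheel} is tuned to be exactly strong enough to annihilate the spurious poles in Claim \ref{claim:residues} (note that the linear factors $z_{ia}-q^s t_e^{-1}x$ and $z_{ia}-q^{s-1}t_e x$ may now coincide for distinct $(s,e)$, which is why \eqref{eqn:restricted wheel} is genuinely weaker than $3$-variable wheel conditions and must be phrased as divisibility by a product rather than vanishing on individual hyperplanes). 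Since \eqref{eqn:restricted wheel} was reverse-engineered from that requirement in Subsection \ref{sub:works}, there is nothing further to check.
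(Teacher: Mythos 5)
Your proposal is correct and follows exactly the paper's own route: identify that the wheel conditions enter Section 3 only through the residue bookkeeping in Proposition \ref{prop:pairing}, use Assumption \begin{otherlanguage*}{russian}Ъ\end{otherlanguage*} to make the contour estimates literal, observe that the divisibility conditions in \eqref{eqn:restricted wheel} were engineered precisely to kill the same spurious poles that the $3$-variable wheel conditions killed before (now with multiplicity, since specialization of $t_e$ may cause the linear factors to collide), and then rerun the rest of Section 3 verbatim over ${_\rho\BF}$. The paper simply asserts this in Subsection \ref{sub:works} and the sentence preceding the Corollary; your write-up adds a useful enumeration of which intermediate statements carry over, but the argument is the same.
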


\noindent Note that the notion of standard words need not be the same for $\CS$ as for ${_\BK\CS}$. More specifically, for some word $v$ there might exist a relation:
$$
e_v = \sum_{\text{standard } w > v} \text{coeff} \cdot e_w \in \CS
$$
for some coefficients in $\BF$, which does not specialize to $\BK$ (i.e. these coefficients are rational functions in the formal symbols $q$ and $t_e$, which might have poles when evaluating $q,t_e$ to elements of $\BK$). If this were to happen, then $v$ would not be standard with respect to $\CS$, but it might be standard with respect to $_\BK\CS$. This ``failure of flatness" could a priori result in the algebra $_\BK\CS$ being ``bigger" than the algebra $\CS$. \\

\begin{example} 
\label{ex:final}

Let us consider the case when $Q$ is the quiver with one vertex and $g$ loops; in this case, Assumption \begin{otherlanguage*}{russian}Ъ\end{otherlanguage*} requires $|\rho(q)| < |\rho(t_e)| < 1$ for all $e \in \{1,\dots,g\}$. For example, this is the case in the setting of Remark \ref{rem:g loops}, since the Weil numbers of a smooth curve $X$ over $\mathbb{F}_{q^{-1}}$ have absolute value $q^{-\frac 12}$. The shuffle algebra \eqref{eqn:restricted shuffle} then consists of those symmetric Laurent polynomials $R$ such that:
$$
R(z_1,qz_1,z_3,\dots,z_n) \text{ is divisible by } \prod_{a=3}^n \prod_{e=1}^g \left[ \left(z_a - \frac {qz_1}{t_e} \right) \left(z_a - t_e z_1 \right) \right]
$$
This situation will be considered in \cite{Quiver 3}, where we will give a generators-and-relations presentation of a large part of the Hall algebra of vector bundles on $X$, which includes the spherical subalgebra. \\

\end{example}

\subsection{}
\label{sub:special k-ha}

In the present paper, our main reason for considering the setup of \eqref{eqn:restricted shuffle} is to study $K$-theoretic Hall algebras which are equivariant with respect to a subtorus: 
$$
H \subset T = \BC^* \times \prod_{e \in E} \BC^*
$$
In this case, let $\BK = \text{Frac}(\text{Rep}_H)$ throughout the current Subsection, and let the elements $q,t_e \in \BK$ be the restrictions to $H$ of the homonymous characters of $T$. Consider the localized $K$-theoretic Hall algebra, by analogy with \eqref{eqn:intro k-ha loc}:
$$
K_{H,\loc} = K_H \bigotimes_{\text{Rep}_H} \BK
$$
where $K_H$ is the version of $K$ defined by replacing the subscript $T$ with $H$ in \eqref{eqn:k-ha intro}. There exists an algebra homomorphism analogous to \eqref{eqn:shuf intro}:
\begin{equation}
\label{eqn:special homomorphism}
K_{H,\loc} \rightarrow {_{\BK}\CV}
\end{equation}
We have the following analogue of Proposition \ref{prop:yu}. \\

\begin{proposition}
\label{prop:special yu}

The image of \eqref{eqn:special homomorphism} lies in the shuffle algebra of Definition \ref{def:special shuffle}:
$$
K_{H,\emph{loc}} \xrightarrow{\iota_H} {_{\BK}\CS} \subset {_{\BK}\CV}
$$
In fact, this statement even holds before localization, i.e. with $K_H$ instead of $K_{H,\emph{loc}}$ and the shuffle algebra defined over $\emph{Rep}_H$ instead of over $\BK = \emph{Frac}(\emph{Rep}_H)$. \\

\end{proposition}

\begin{proof} We will prove the required statements in a fixed (but arbitrary) graded piece of the algebras $K_{H,\text{loc}}, {_{\BK}\CS}, {_{\BK}\CV}$. For any $i,j \in I$ and for any $\gamma \in \BK^\times$, let us consider all edges: 
$$
e_1,\dots,e_d = \oij \quad \text{and} \quad e_1',\dots,e'_{d'} = \oji
$$
such that:
$$
t_{e_1} = \dots = t_{e_d} = \frac q{t_{e'_1}} = \dots = \frac q{t_{e'_{d'}}} = \gamma
$$
We will henceforth use the notation in the proof of Proposition \ref{prop:yu}. We need to show that for any $\alpha \in K_{H,\text{loc}}$, the shuffle element $R = \iota_H(\alpha)$ has the property that:
\begin{equation}
\label{eqn:division property}
(z_{jb} - z_{ic} \gamma)^{d+d'} \quad \text{divides} \quad R \Big|_{z_{ia} = q z_{ic}}
\end{equation}
(for any $a\neq c$, such that also $a\neq b \neq c$ if $i=j$). To this end, we apply the same argument as in Proposition \ref{prop:yu}, but replacing the locally closed subset $V_e$ of \eqref{eqn:locally closed} by the locally closed subset $V_\gamma$ of collections of linear maps of the following form:
$$
(\phi_e,\phi_e^*) = \begin{cases} (x_r E_{bc}, y_rE_{ab}) &\text{if }e = e_r \text{ for some }r \in \{1,\dots,d\} \\ (y_{r'}' E_{ab}, x'_{r'} E_{bc}) &\text{if }e = e'_{r'} \text{ for some }r' \in \{1,\dots,d'\} \\ (0,0) &\text{otherwise} \end{cases}
$$
where in the formula above the $x$'s and $y$'s are complex numbers which satisfy:
$$
x_1y_1+\dots+x_dy_d - x_1'y_1' - \dots - x_{d'}'y_{d'}' \neq 0
$$
Thus, the set $V_\gamma$ is the complement of a hypersurface in affine space with coordinates $x_r,y_r, x'_{r'}, y_{r'}'$, and the action $H \times (\text{maximal torus of }G_{\bn}) \curvearrowright V_\gamma$ is such that the $x$'s and $y$'s are rescaled by the characters:
$$
\frac {z_{jb}}{\gamma z_{ic}} \qquad \text{and} \qquad \frac {\gamma z_{ia}}{q z_{jb}}
$$
respectively. Let $\rho : V_\gamma \rightarrow (\text{point})$ denote the usual projection. By analogy with the proof of Proposition \ref{prop:yu}, we will use the fact that $\rho^*(R) = 0$ to obtain \eqref{eqn:division property}. First of all, replacing $V_{\gamma}$ by the closed subset:
$$
\overline{V}_\gamma = \Big\{x_1y_1+\dots+x_dy_d - x_1'y_1' - \dots - x_{d'}'y_{d'}' = 1 \Big\}
$$
has the effect of replacing $R$ by $\overline{R} = R|_{z_{ia} = q z_{ic}}$. However, $\overline{V}_\gamma$ is an affine bundle over projective space with coordinates $x_1,\dots,x_d,x_1',\dots,x'_{d'}$, so its equivariant $K$-theory is isomorphic to that of projective space, namely:
$$
\text{Rep}_H[\dots,z_{ia}^{\pm 1},\dots,z_{jb}^{\pm 1},\dots,z_{ic}^{\pm 1}, \dots]\Big|_{z_{ia} = qz_{ic}} \Big/ (z_{jb} - z_{ic}\gamma)^{d+d'}
$$
(the particular quotient is due to the fact that the coordinates $x_1,\dots,x_d,x_1',\dots,x'_{d'}$ are all rescaled by the equivariant parameter $z_{jb} (\gamma z_{ic})^{-1}$). Then the fact that $\overline{R}$ vanishes when pulled back from a point to $\overline{V}_\gamma$ precisely implies \eqref{eqn:division property}.

\end{proof}

\noindent Exactly like we proved the surjectivity of the map \eqref{eqn:iota 2}, we obtain the following. \\

\begin{corollary}
\label{cor:special surj}

If the characters $q,t_e$ of $H$ satisfy Assumption \begin{otherlanguage*}{russian}Ъ\end{otherlanguage*}, then the map:
$$
K_{H,\emph{loc}} \stackrel{\iota_H}\twoheadrightarrow {_{\BK}\CS} 
$$
is surjective (recall that $\BK = \emph{Frac}(\emph{Rep}_H)$). \\

\end{corollary}

\noindent For example, Assumption \begin{otherlanguage*}{russian}Ъ\end{otherlanguage*} holds for the one-dimensional torus:
\begin{equation}
\label{eqn:example}
H = \BC^* \hookrightarrow \BC^* \times \prod_{e \in E} \BC^* = T, \qquad a \mapsto (a^2,a,\dots,a)
\end{equation}
which corresponds to the situation when $t_e = q^{\frac 12}$ for all edges $e$. However, it does not hold for the trivial torus $H = \{1\}$, in which case one must impose stronger conditions than \eqref{eqn:divisible 2} to define the shuffle algebra ${_{\BK}\CS}$ (these will be studied in \cite{Arbitrary}). \\

\noindent It was shown in \cite[Proposition 2.4.4]{VV} (see also \cite[Proposition 3.9]{Pad 1} for a version of this result in the setting of categories of singularities associated to quivers with potential) that the map $\iota_H$ is injective under the condition that $q \neq 1$, $t_e \neq 1$ and $q/t_e \neq 1$ as characters of $H$, for all edges $e \in E$ \footnote{While this condition is a priori weaker than \cite[(2.44)]{VV}, we note that it ensures the existence of a cocharacter $\theta : \BC^* \rightarrow H$ whose fixed point set in the affine space $T^*Z_{\bn}$ consists of only the origin, thus allowing the proof of \cite[Proposition 2.4.4.]{VV} to run through. We thank Michela Varagnolo and \'Eric Vasserot for pointing out this fact.}. Since the aforementioned condition is weaker than Assumption \begin{otherlanguage*}{russian}Ъ\end{otherlanguage*}, we obtain the following. \\

\begin{corollary}
\label{cor:special iso}

If the characters $q,t_e$ of $H$ satisfy Assumption \begin{otherlanguage*}{russian}Ъ\end{otherlanguage*}, then the map:
$$
K_{H,\emph{loc}} \stackrel{\iota_H}\cong {_{\BK}\CS} 
$$
is an isomorphism, hence $K_{H,\emph{loc}}$ is generated by elements of minimal degree. \\

\end{corollary}

\subsection{}
\label{sub:km quiver}

One of the original motivations for $K$-theoretic Hall algebras and shuffle algebras was the fact that they provide incarnations of the positive halves of quantum loop groups of Kac-Moody type associated to symmetric Cartan matrices. In the present Subsection, we assume that $Q$ has no edge loops or multiple edges, and work over the ground field:
\begin{equation}
\label{eqn:km field}
\BK = \BQ(q^{\frac 12})
\end{equation}
The following is the (by now classical) definition of the positive half of the quantum loop group associated to $Q$. \\

\begin{definition}
\label{def:quantum group}

Consider the algebra:
$$
U_q^+(L\mathfrak{g}_Q) = \BK \Big \langle e_{i,d} \Big \rangle_{i \in I, d \in \BZ} \Big / \text{relations \eqref{eqn:km 1}, \eqref{eqn:km 2}, \eqref{eqn:km 3}}
$$
where we consider the formal series $e_i(x) = \sum_{d \in \BZ} \frac {e_{i,d}}{x^d}$ for all $i \in I$, and require: 
\begin{equation}
\label{eqn:km 1}
e_i(x) e_i(y) (xq - y)  = e_i(y) e_i(x) (x- yq) 
\end{equation}
\begin{equation}
\label{eqn:km 2}
e_i(x) e_j(y) \left(x - y q^{\frac 12}\right)  = e_j(y) e_i(x) \left(x q^{\frac 12} - y \right) 
\end{equation}
and:
\begin{multline}
\label{eqn:km 3}
e_i(x_1)e_i(x_2)e_j(y) - \left(q^{\frac 12} + q^{-\frac 12}\right) e_i(x_2)e_j(y)e_i(x_1) + e_j(y) e_i(x_1) e_i(x_2) + \\ + e_i(x_2)e_i(x_1)e_j(y) - \left(q^{\frac 12} + q^{-\frac 12} \right) e_i(x_1)e_j(y)e_i(x_2) + e_j(y) e_i(x_2) e_i(x_1) = 0
\end{multline}
for all $i \neq j$ in $I$. \\

\end{definition}

\noindent It is easy to note that the assignment $e_{i,d} \mapsto z_{i1}^d$ yields an algebra homomorphism:
\begin{equation}
\label{eqn:upsilon}
U_q^+(L\mathfrak{g}_Q) \xrightarrow{\Upsilon} {_q\CS'}
\end{equation}
where the right hand side is the shuffle algebra defined as in Subsection \ref{sub:r}, but over the ground field $\BK$ and with all $t_e$'s set equal to $q^{\frac 12}$ in the definition of the $\zeta'$ function in \eqref{eqn:def zeta final}. Since Assumption \begin{otherlanguage*}{russian}Ъ\end{otherlanguage*} holds in the case at hand, Corollary \ref{cor:gen} (or more precisely, the version of this Corollary which pertains to the twisted setup of Section \ref{sec:r} instead of the untwisted setup of Section \ref{sec:proof}) implies that $\Upsilon$ is surjective. \\

\begin{proof} \emph{of Theorem \ref{thm:triple iso}:} As shown in \cite[Theorem 5.7]{Quiver 3}, the algebra ${_q\CS'}$ is isomorphic to the quantum group $_q\mathbf{U}^+$, which is defined by generators $\{e_{i,d}\}_{i \in I, d \in \BZ}$ modulo relations \eqref{eqn:km 1}, \eqref{eqn:km 2} and:
$$
\left(x_1 - y q^{\frac 12} \right) e_i(x_1) e_i(x_2) e_j(y) + \left(x_2 q^{\frac 12} - x_1 q^{-\frac 12} \right)  e_i(x_2) e_j(y) e_i(x_1) + 
$$
\begin{equation}
\label{eqn:km 4}
 + \left(y q^{-\frac 12}  - x_2\right) e_j(y) e_i(x_1) e_i(x_2)  = 0
\end{equation}
It suffices to show that relations \eqref{eqn:km 3} and \eqref{eqn:km 4} are equivalent modulo relations \eqref{eqn:km 1}, \eqref{eqn:km 2}. The fact that \eqref{eqn:km 4} implies \eqref{eqn:km 3} was proved in \cite[Example 3.9]{Quiver 3}. As for the opposite implication, let us rewrite \eqref{eqn:km 4} as:
$$
(x_1 -x_2) e_i(x_1) e_i(x_2) e_j(y) + \left(x_2 -  y q^{\frac 12} \right) e_i(x_1) e_i(x_2) e_j(y) +
$$
$$
+ \left(x_2 q^{\frac 12} - x_1 q^{-\frac 12} \right)  e_i(x_2) e_j(y) e_i(x_1) +
$$
$$
+ (x_1 - x_2) e_j(y) e_i(x_1) e_i(x_2)  + \left(y q^{-\frac 12} - x_1\right) e_j(y) e_i(x_1) e_i(x_2)
$$
By applying \eqref{eqn:km 2} to the second and fifth terms above, we obtain:
$$
\Big(x_1 -x_2\Big) \Big(e_i(x_1) e_i(x_2) e_j(y) + e_j(y) e_i(x_1) e_i(x_2) \Big) + 
$$
$$
+ \left(x_2 q^{\frac 12} - x_1 q^{-\frac 12} \right) \Big(e_i(x_1) e_j(y) e_i(x_2) + e_i(x_2) e_j(y) e_i(x_1) \Big) 
$$
If we multiply the expression above by $q^{\frac 12}+q^{-\frac 12}$, we obtain:
$$
\left(x_1q^{\frac 12} -x_2q^{-\frac 12} + x_1q^{-\frac 12} -x_2q^{\frac 12}\right) \Big(e_i(x_1) e_i(x_2) e_j(y) + e_j(y) e_i(x_1) e_i(x_2) \Big) + 
$$
$$
+ \left(x_2 q^{\frac 12} - x_1 q^{-\frac 12} \right) \left(q^{\frac 12}+q^{-\frac 12}\right)\Big(e_i(x_1) e_j(y) e_i(x_2) + e_i(x_2) e_j(y) e_i(x_1) \Big) 
$$
which, by applying \eqref{eqn:km 1} to the first line, becomes $x_1 q^{-\frac 12} - x_2 q^{\frac 12}$ times \eqref{eqn:km 3}.

\end{proof}

\begin{proof} \emph{of Corollary \ref{cor:hopf pairing}:} The algebra $U_q^+(L\mathfrak{g}_Q)$ can be extended and made into a bialgebra by analogy with the constructions in Subsection \ref{sub:cop plus}: 
$$
U_q^{\geq}(L\mathfrak{g}_Q) = U_q^+(L\mathfrak{g}_Q) \bigotimes_{\BK} \BK \left[h^+_{i,d}\right]_{i \in I, d \geq 0}
$$
Similarly, one defines $U_q^-(L\mathfrak{g}_Q) = U_q^+(L\mathfrak{g}_Q)^{\op}$ (with generators denoted by $f_{i,d}$) and extends/endows it with a coproduct by analogy with Subsection \ref{sub:cop minus}:
$$
U_q^{\leq}(L\mathfrak{g}_Q) = U_q^-(L\mathfrak{g}_Q) \bigotimes_{\BK} \BK \left[h^-_{i,d}\right]_{i \in I, d \geq 0}
$$
One can construct a natural pairing:
\begin{equation}
\label{eqn:pairing loop}
U_q^+(L\mathfrak{g}_Q) \otimes U_q^-(L\mathfrak{g}_Q) \xrightarrow{\langle \cdot, \cdot\rangle} \BK
\end{equation}
by requiring that it satisfies relation \eqref{eqn:pair h's}, that:
$$
\Big \langle e_{i,d}, f_{j,k} \Big \rangle = \delta^i_j \delta^0_{d+k}, \qquad \forall \ i,j \in I, d,k \in \BZ
$$
and that its extension to $U_q^\geq(L\mathfrak{g}_Q) \otimes U_q^\leq(L\mathfrak{g}_Q)$ is a bialgebra pairing (by analogy with Proposition \ref{prop:bialgebra pairing}). A priori, the pairing \eqref{eqn:pairing loop} might be degenerate, and in fact this would be the case if $Q$ had multiple edges. However, in the situation at hand, the fact that $\Upsilon$ (as well as its analogue when $+$ is replaced with $-$) is an isomorphism which preserves all coproducts and pairings means that \eqref{eqn:pairing loop} coincides with (the version with $\zeta \leadsto \zeta'$ of) the pairing \eqref{eqn:pairing}, which we know to be non-degenerate. 

\end{proof}

\medskip

\end{document}